\newcommand{\prtline}[4][]{\ncline[#1]{#2}{#3}{#4}\mput{\pscircle[fillstyle=solid,fillcolor=white]{2pt}}}
\newcommand{\prtto}{\Rnode{a}{}\hspace{1cm}\Rnode{b}{}\psset{nodesep=3pt}\everypsbox{\scriptstyle}\prtline{->}{a}{b}}
\newcommand{\injto}{\hookrightarrow}
\newcommand{\epito}{\twoheadrightarrow}
\newcommand{\RA}{{\operatorname{RA}}}
\newcommand{\WKA}{{\operatorname{WKA}}}
\newcommand{\bA}{\mathbf{A}}
\newcommand{\bB}{\mathbf{B}}
\newcommand{\bC}{\mathbf{C}}
\newcommand{\bD}{\mathbf{D}}
\newcommand{\bM}{\mathbf{M}}
\newcommand{\bU}{\mathbb{U}}
\newcommand{\ar}{\operatorname{ar}}
\newcommand{\str}[2]{(#1, (\varrho_{\mathbf{#1}})_{\varrho\in #2})}
\newcommand{\ta}{\overline{a}}
\newcommand{\tb}{\overline{b}}
\newcommand{\tc}{\overline{c}}
\newcommand{\td}{\overline{d}}
\newcommand{\tu}{\overline{u}}
\newcommand{\tx}{\overline{x}}
\newcommand{\dom}{\operatorname{dom}}
\newcommand{\ppTp}{\operatorname{Tpp}}
\newcommand{\bN}{\mathbb{N}}
\newcommand{\Inv}{\operatorname{Inv}}
\newcommand{\Pol}{\operatorname{Pol}}
\newcommand{\End}{\operatorname{End}}
\newcommand{\Age}{\operatorname{Age}}
\newcommand{\cC}{\mathcal{C}}
\newcommand{\cP}{\mathcal{P}}
\newcommand{\pres}{\vartriangleright}
\newcommand{\cE}{\mathcal{E}}
\newcommand{\cL}{\mathcal{L}}
\newcommand{\restr}{\mathord{\upharpoonright}}
\theoremstyle{plain}
\newtheorem{theorem}[subsection]{Theorem}
\newtheorem{proposition}[subsection]{Proposition}
\newtheorem{lemma}[subsection]{Lemma}
\newtheorem{corollary}[subsection]{Corollary}
\theoremstyle{definition}
\newtheorem{definition}[subsection]{Definition}
\theoremstyle{remark}
\newtheorem*{remark}{Remark}
\author[Ch.\,Pech]{Christian Pech}
\address{Institute of Algebra\\Technische Universit\"at Dresden}
\email{Christian.Pech@tu-dresden.de}
\urladdr{http://www.math.tu-dresden.de/~pech} 
\author[M.\,Pech]{Maja Pech}
\address{Department of Mathematics\\University of Novi Sad}
\curraddr{Institute of Algebra\\Technische Universit\"at Dresden}
\email{maja@dmi.uns.ac.rs, Maja.Pech@tu-dresden.de}
\urladdr{http://people.dmi.uns.ac.rs/~maja/}
\title[Polymorphism-homogeneous structures]{On polymorphism-homogeneous relational structures and their clones}
\subjclass[2010]{08A02 (03C10, 03C40, 05C63, 06A06)}
\keywords{relational structure, polymorphism-clone, relational clone, weak oligomorphy, quantifier elimination, homomorphism-homogeneity, polymorphism-homogeneity, Galois connection}
\begin{document}
\begin{abstract}
	A relational structure is ho\-mo\-mor\-phism-ho\-mo\-ge\-ne\-ous if every homomorphism between finite substructures extends to an endomorphism of the structure. This notion was introduced recently by Cameron and Ne\v{s}et\v{r}il. In this paper we consider a strengthening of ho\-mo\-mor\-phism-ho\-mo\-ge\-ne\-i\-ty --- we call a relational structure po\-ly\-mor\-phism-ho\-mo\-ge\-ne\-ous if every partial polymorphism with a finite domain extends to a global polymorphism of the structure. It turns out that this notion (under various names and in completely different contexts) has been existing in algebraic literature for at least 30 years. Motivated by this observation, we dedicate this paper to the topic of po\-ly\-mor\-phism-ho\-mo\-ge\-ne\-ous structures. We study po\-ly\-mor\-phism-ho\-mo\-ge\-ne\-i\-ty from a model-theoretic, an algebraic, and a combinatorial point of view. E.g., we study structures that have quantifier elimination for positive primitive formulae, and show that this notion is equivalent to po\-ly\-mor\-phism-ho\-mo\-ge\-ne\-i\-ty for weakly oligomorphic structures. We demonstrate how the Baker-Pixley theorem can be used to show that po\-ly\-mor\-phism-ho\-mo\-ge\-ne\-i\-ty is a decidable property for finite relational structures. Eventually, we completely characterize the countable po\-ly\-mor\-phism-ho\-mo\-ge\-ne\-ous graphs, the po\-ly\-mor\-phism-ho\-mo\-ge\-ne\-ous posets of arbitrary size, and the countable po\-ly\-mor\-phism-ho\-mo\-ge\-ne\-ous strict posets.  
\end{abstract}

\maketitle

\section{Introduction}\thispagestyle{empty}
A relational structure is called po\-ly\-mor\-phism-ho\-mo\-ge\-ne\-ous if every partial polymorphism with finite domain extends to a global polymorphism of the structure. The phenomenon of po\-ly\-mor\-phism-ho\-mo\-ge\-ne\-i\-ty appears in different contexts under varying names in the algebraic literature. The earliest occurrence of this idea seems to be the Baker-Pixley Theorem in universal algebra \cite{BakPix75} that simultaneously generalizes the Chinese remainder theorem and Langrange's interpolation theorem. In \cite{Rom87},  motivated by questions from multivalued logics and clone theory, Romov studies relational structures over finite sets for which every partial polymorphism can be extended to a global one.  In \cite{Rom08,Rom11} he extended this approach to countably infinite structures. Another source of the notion of po\-ly\-mor\-phism-ho\-mo\-ge\-ne\-i\-ty is \cite{Kaa83}, where Kaarli characterizes all po\-ly\-mor\-phism-ho\-mo\-ge\-ne\-ous meet-complete lattices of equivalence relations and, using this characterization, identifies classes of locally affine complete algebras. Related to po\-ly\-mor\-phism-ho\-mo\-ge\-ne\-i\-ty is also the interpolation condition (IC) that plays an important role in the theory of natural dualities \cite{ClaDav98}. A structure has the (IC) if every partial polymorphism (not necessary with finite domain) extends to a global one. Thus, structures that have the (IC) are in particular  po\-ly\-mor\-phism-ho\-mo\-ge\-ne\-ous. 

The appearance of one and the same idea in such diverse contexts  motivated us to have a closer look onto the theory of po\-ly\-mor\-phism-ho\-mo\-ge\-ne\-ous structures and its connections with other, related, model-theoretic notions like quantifier elimination. Our results extend and generalize previous results by Romov \cite{Rom08}. A diagram that sums up our findings can be found at the end of Section~\ref{sec4}. In Section~\ref{lcRC} we apply the results from Section~\ref{sec4}, to characterize all those countable relational structures that have  the property that the primitively positively definable relations coincide with the relations that are invariant under all polymorphisms of the given structure. As a consequence, we can give a short, model-theoretic proof of Romov's characterization of locally closed relational clones on countable sets \cite[Thm.3.5]{Rom08}.
In Section~\ref{sec5} some connections between the notion of po\-ly\-mor\-phism-ho\-mo\-ge\-ne\-i\-ty and the Baker-Pixley Theorem are presented. In particular, we will  derive an algebraic characterization of the structures that fulfill the interpolation condition among all po\-ly\-mor\-phism-ho\-mo\-ge\-ne\-ous structures. Moreover, we show that for finite structures po\-ly\-mor\-phism-ho\-mo\-ge\-ne\-i\-ty is decidable.   

Recently, in their seminal paper \cite{CamNes06}, Cameron and Ne\v{s}et\v{r}il studied various generalizations of the classical notion of (ultra-)homogeneity (recall that a relational structure is called homogeneous if every isomorphism between finite substructures extends to an automorphism of the given structure). One of the discussed generalizations is ho\-mo\-mor\-phism-ho\-mo\-ge\-ne\-i\-ty. A relational structure is ho\-mo\-mor\-phism-ho\-mo\-ge\-ne\-ous if and only if every homomorphism between finite substructures extends to an endomorphism of the structure.  
It soon turned out that this notion is very relevant in the theory of transformation monoids on countable sets \cite{Dol12,Dol12b,MPPhD,AUpaper,Pon05}. Moreover, there exists already a rich  classification theory for ho\-mo\-mor\-phism-ho\-mo\-ge\-ne\-ous structures \cite{CamLoc10,DolMas11,IliMasRaj08,JunMas12,Mas07,Mas12,MasNenSko11}.
Note that by definition, every po\-ly\-mor\-phism-ho\-mo\-ge\-ne\-ous structure is also ho\-mo\-mor\-phism-ho\-mo\-ge\-ne\-ous. Thus, in a sense, the po\-ly\-mor\-phism-ho\-mo\-ge\-ne\-ous structures are especially beautiful ho\-mo\-mor\-phism-ho\-mo\-ge\-ne\-ous structures. In Section~\ref{sec6}, we will completely characterize the countable po\-ly\-mor\-phism-ho\-mo\-ge\-ne\-ous graphs, the countable po\-ly\-mor\-phism-ho\-mo\-ge\-ne\-ous strict posets, and all po\-ly\-mor\-phism-ho\-mo\-ge\-ne\-ous non-strict posets. At the end of the paper we shortly review Kaarlie's elegant characterization of po\-ly\-mor\-phism-ho\-mo\-ge\-ne\-ous meet-complete lattices of equivalence relations. 

We thank the anonymous referee for the thoughtful remarks that helped tremendously to improve the readability of the paper, and for unearthing a mistake in an earlier version of this text.

\section{Preliminaries}
\subsection*{Relational Structures}
A relational signature $L$ is a family $(\varrho_i)_{i\in I}$ of relational symbols, together with a function $\ar:L\to\bN\setminus\{0\}$ that assigns to every symbol $\varrho$ its arity $\ar(\varrho)$. If $\varrho$ is a relational symbol that belongs to $L$, then we write $\varrho\in L$. A relational structure $\bA$ over the signature $L$ is a pair $(A,(\varrho_\bA)_{\varrho\in L})$ such that $A$ is a set and $\varrho_\bA$ is a relation of arity $\ar(\varrho)$ on $A$. The set $A$ is also called the \emph{carrier} of $\bA$. Relational structures over $L$ will simply be called $L$-structures, or structures (if $L$ is clear from the context). Homomorphisms, endomorphisms, isomorphisms,  automorphisms, epimorphisms, and monomorphisms are defined as usual, for $L$-structures. Embeddings are strong monomorphisms (in the model theoretic sense, cf. \cite{Hod97}). Also, we use the term \emph{substructure} in the model-theoretic sense. In the graph-theoretic terminology, model-theoretic substructures are usually called induced substructures.  Recall that the age of a relational structure $\bA$ is the class of all finite structures that embed into $\bA$. In other words, a structure is in the age of $\bA$ if and only if it is isomorphic to some finite substructure of $\bA$. The age of $\bA$ will be denoted by $\Age(\bA)$.

\subsection*{Polymorphism-homogeneity}
Let $\bA$ and $\bB$ be relational structures over the same signature $L$.  Let $\bD\le\bA$. Then a homomorphism $f:\bD\to\bB$ is called \emph{partial homomorphism} from $\bA$ to $\bB$ with domain $\bD$ (written as $f:\bA\prtto\bB$).\par
\[
  \begin{psmatrix}
    \bD & \bB\\
    \bA
   \ncline{->}{1,1}{1,2}^{f}
   \ncline{H->}{1,1}{2,1}\Bput{=}
   \prtline[linestyle=dashed]{->}{2,1}{1,2}
  \end{psmatrix}
\]
The structure $\bD$ will usually be denoted by $\dom f$. In the special case where $L$ is the empty signature, partial homomorphisms are just usual partial functions. Partial homomorphisms of a structure to itself are called \emph{partial endomorphisms}. Finally, partial homomorphisms with a finite domain will be called \emph{local homomorphisms}. 

Let $I$ be a set and, for $i\in I$,  let $\bA_i=(A_i,(\varrho_{\bA_i})_{\varrho\in L})$ be relational structures. The the \emph{product} of the family $(\bA_i)_{i\in I}$ is defined by
\[\bA=\prod_{i\in I} \bA_i := \left(\prod_{i\in I} A_i, (\varrho_\bA)_{\varrho\in L}\right),\]
where
\[\prod_{i\in I} A_i := \{(a_i)_{i\in I}\mid \forall i\in I\, : \,a_i\in A_i\}\]
and such that for all $\varrho\in L$ we have
\[\varrho_\bA := \{((a_{1,i})_{i\in I}, \ldots, (a_{\ar(\varrho),i})_{i\in I}) \mid\forall i\in I\,:\, (a_{1,i},\ldots, a_{\ar(\varrho),i})\in\varrho_{\bA_i}\}.\]
If $\emptyset\subsetneq J\subseteq I$, then we denote the \emph{projection homomorphism} with respect to $J$ by
\[e_J: \prod_{i\in I} \bA_i \to \prod_{i\in J} \bA_i\qquad (a_i)_{i\in
  I}\mapsto (a_i)_{i\in J}.\] 
In the special case, when $J=\{j\}$, we write $e_j$ instead of $e_{\{j\}}$.

When all $\bA_i$ are equal to one and the same structure $\bA$, then we abbreviate the product $\prod_{i\in I} \bA_i$ by $\bA^I$. This special kind of direct product is called a \emph{direct power} of $\bA$. Direct powers will usually occur for the special case $I=k$, where $k$ is a finite cardinal number.

Let $L$ be a relational signature and let  $\bA=\str{A}{L}$ be an $L$-structure. Then the \emph{$k$-ary polymorphisms} of $\bA$ are defined to be the homomorphisms from $\bA^k$ to $\bA$. Partial and local $k$-ary polymorphisms of $\bA$ are defined accordingly, as partial or local homomorphisms from $\bA^k$ to $\bA$, respectively. The set of all polymorphisms of $\bA$ will be denoted by $\Pol(\bA)$, while the set of all $k$-ary polymorphisms will be denoted by $\Pol^{(k)}(\bA)$. The unary polymorphisms of $\bA$ are called \emph{endomorphisms} of $\bA$. Traditionally, the set of endomorphisms of $\bA$ is denoted by $\End(\bA)$. 

It is not hard to see that $k$-ary partial polymorphisms of $\bA$ are characterized by the following property:
  A partial function $f:A^k\prtto A$ is a partial polymorphism of $\bA$ if and only if for all $\varrho\in L$ and for all $\ta_1,\ldots,\ta_{\ar(\varrho)}\in\dom f$ with $\ta_i=(a_{i,1},\ldots,a_{i,k})$ holds that
\[ 
	\begin{bmatrix}
		a_{1,1} \\
		\vdots \\
		a_{\ar(\varrho),1} \\
	\end{bmatrix}\in\varrho_\bA,\ldots,
	\begin{bmatrix}
		a_{1,k} \\
		\vdots \\
		a_{\ar(\varrho),k} \\
	\end{bmatrix}\in\varrho_\bA \Longrightarrow
	\begin{bmatrix}
		f(a_{1,1},\ldots,a_{1,k}) \\
		\vdots \\
		f(a_{\ar(\varrho),1},\ldots,a_{\ar(\varrho),k}) \\
	\end{bmatrix}\in\varrho_\bA\]

We say that a relational structure $\bA$ is \emph{$k$-po\-ly\-mor\-phism-ho\-mo\-ge\-ne\-ous} if every $k$-ary local polymorphism of $\bA$ can be extended to a polymorphism of $\bA$.  If $\bA$ is $k$-po\-ly\-mor\-phism-ho\-mo\-ge\-ne\-ous for every $k\in\bN\setminus\{0\}$, then we say that $\bA$ is \emph{po\-ly\-mor\-phism-ho\-mo\-ge\-ne\-ous}.  If a structure $\bA$ is $1$-po\-ly\-mor\-phism-ho\-mo\-ge\-ne\-ous then we call it \emph{ho\-mo\-mor\-phism-ho\-mo\-ge\-ne\-ous}. Thus, every po\-ly\-mor\-phism-ho\-mo\-ge\-ne\-ous structure is ho\-mo\-mor\-phism-ho\-mo\-ge\-ne\-ous. However, po\-ly\-mor\-phism-ho\-mo\-ge\-ne\-i\-ty is a much stronger  property than ho\-mo\-mor\-phism-ho\-mo\-ge\-ne\-i\-ty, as can be seen from the following simple observation:
\begin{proposition}\label{PHHH}
	A structure $\bA$ is $k$-po\-ly\-mor\-phism-ho\-mo\-ge\-ne\-ous if and only if $\bA^k$ is ho\-mo\-mor\-phism-ho\-mo\-ge\-ne\-ous.\qed
\end{proposition}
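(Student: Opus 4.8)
The plan is to reduce everything to the universal property of direct products. For an arbitrary $L$-structure $\bC$, a map $g\colon C\to A^k$ is a homomorphism $\bC\to\bA^k$ precisely when each composite $e_i\circ g\colon\bC\to\bA$ (for $i\in\{1,\dots,k\}$) is a homomorphism, and $g$ is recovered from the tuple $(e_1\circ g,\dots,e_k\circ g)$. First I would record two instances of this. Taking $\bC=\bA^k$ shows that the endomorphisms of $\bA^k$ are exactly the $k$-tuples $(g_1,\dots,g_k)$ with every $g_i\in\Pol^{(k)}(\bA)$. Taking $\bC=\bD$ for a finite $\bD\le\bA^k$ shows that the local homomorphisms from $\bA^k$ into $\bA^k$ with domain $\bD$ are exactly the $k$-tuples $(f_1,\dots,f_k)$ of $k$-ary local polymorphisms of $\bA$ sharing the common domain $\bD$ (here one uses the standard fact that a homomorphism between finite substructures of $\bA^k$ is the same thing as a local homomorphism $\bA^k\to\bA^k$, since the image lies in a finite substructure). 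Moreover, under this correspondence a tuple $(g_1,\dots,g_k)$ extends a tuple $(f_1,\dots,f_k)$ if and only if $g_i$ extends $f_i$ for each $i$, so extendability can be checked coordinatewise.

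Given this, the forward implication is immediate: if $\bA$ is $k$-polymorphism-homogeneous and $f=(f_1,\dots,f_k)\colon\bD\to\bA^k$ is a local homomorphism, then each $f_i$ is a $k$-ary local polymorphism of $\bA$, hence extends to some $g_i\in\Pol^{(k)}(\bA)$, and $(g_1,\dots,g_k)$ is an endomorphism of $\bA^k$ extending $f$. For the converse, given a $k$-ary local polymorphism $f\colon\bD\to\bA$ with $\bD\le\bA^k$ finite, I would form the \emph{padded} local homomorphism $\hat f:=(f,\dots,f)\colon\bD\to\bA^k$ (each of its $k$ coordinates being the homomorphism $f$, its image contained in the finite substructure of $\bA^k$ induced on $\{(f(d),\dots,f(d))\mid d\in D\}$). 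Homomorphism-homogeneity of $\bA^k$ yields an endomorphism $(g_1,\dots,g_k)$ of $\bA^k$ extending $\hat f$; then $g_1\in\Pol^{(k)}(\bA)$ and $g_1\restr D=e_1\circ\hat f=f$, so $f$ extends to a polymorphism of $\bA$, and $\bA$ is $k$-polymorphism-homogeneous.

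I do not expect a genuine obstacle here; this is a simple observation, as the surrounding text indicates. The only thing one must not overlook is the padding step in the converse direction: homomorphism-homogeneity of $\bA^k$ is a statement about homomorphisms with codomain $\bA^k$, so a single local polymorphism $f\colon\bD\to\bA$ has to be turned into a $k$-tuple before the hypothesis can be applied; the diagonal padding $(f,\dots,f)$ is the natural choice, and in fact any $k$-tuple of local polymorphisms on $\bD$ with first component $f$ would serve. All remaining steps are the routine verification that the product/projection bookkeeping behaves as expected.
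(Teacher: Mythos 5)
Your proposal is correct and follows essentially the same route as the paper: the forward direction decomposes a local homomorphism of $\bA^k$ into its coordinate maps $e_i\circ h$, extends each as a $k$-ary polymorphism, and reassembles; the converse uses exactly the paper's diagonal padding $\ta\mapsto(f(\ta),\dots,f(\ta))$ followed by a projection. Nothing further is needed.
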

\begin{proof}
	``$\Rightarrow$'' Let $h:\bA^k\prtto\bA^k$ be a local homomorphism of $\bA^k$, with domain $\bD$. For $0\le i<k$, define $h_i:=e_i\circ h$ --- i.e., if $h(a_0,\dots,a_{k-1}) = (b_0,\dots,b_{k-1})$, then $h_i(a_0,\dots,a_{k-1})=b_i$. Then $h_i$ is a  $k$-ary local polymorphism of $\bA$, for every $i\in\{0,\dots,k-1\}$. Since  $\bA$ is $k$-polymorphism-homogeneous, for every $i\in\{0,\dots,k-1\}$ there exists a $\hat{h}_i\in\Pol^{(k)}(\bA)$ that extends $h_i$. It is now easy to see that $\hat{h}:A^k\to A^k$ defined through
	\[ \hat{h}(\ta):=\big(\hat{h}_0(\ta),\dots,\hat{h}_{k-1}(\ta)\big)\text{ where } \ta=(a_0,\dots,a_{k-1})\in A^k\]
	is an endomorphism of $\bA^k$ that extends $h$.
	
	``$\Leftarrow$'' Let $h:\bA^k\prtto\bA$ be a local $k$-ary polymorphism of $\bA$. Define $h':\bA^k\prtto\bA^k$ by $\ta\mapsto(h(\ta),\dots,h(\ta))$, for every $\ta$ from the domain of $h$. Since $\bA^k$ is homomorphism homogeneous $h'$ extends to an endomorphism $\tilde{h}$ of $\bA^k$. Now define $\hat{h}:\bA^k\to \bA$ according to $\hat{h}:=e_1 \circ \tilde{h}$. Then $\hat{h}$ extends $h$.  
\end{proof}

An immediate consequence is that a relational structure $\bA$ is po\-ly\-mor\-phism-ho\-mo\-ge\-ne\-ous if and only if all finite powers of $\bA$ are ho\-mo\-mor\-phism-ho\-mo\-ge\-ne\-ous.

The following proposition will show that the concept of $k$-po\-ly\-mor\-phism-ho\-mo\-ge\-ne\-i\-ty defines a decreasing hierarchy on the ho\-mo\-mor\-phism-ho\-mo\-ge\-ne\-ous structures with ho\-mo\-mor\-phism-ho\-mo\-ge\-ne\-ous structures on the top.
\begin{proposition}
	Let $\bA$ be a relational structure, and let   $k\in\bN\setminus\{0\}$. If $\bA$ is $(k+1)$-po\-ly\-mor\-phism-ho\-mo\-ge\-ne\-ous, then it is also $k$-po\-ly\-mor\-phism-ho\-mo\-ge\-ne\-ous.
\end{proposition}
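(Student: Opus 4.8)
The plan is to reduce $k$-polymorphism-homogeneity to a statement about $k$-ary local polymorphisms by "padding" with a dummy variable, exploiting that a $k$-ary operation can be viewed as a $(k+1)$-ary operation that ignores its last coordinate. Concretely, suppose $\bA$ is $(k+1)$-polymorphism-homogeneous, and let $f\colon\bA^k\prtto\bA$ be a $k$-ary local polymorphism with finite domain $\dom f\subseteq A^k$. Pick any element $c\in A$ (if $A=\emptyset$ there is nothing to prove). The first step is to define a $(k+1)$-ary local polymorphism $g$ by $g(a_0,\dots,a_{k-1},c):=f(a_0,\dots,a_{k-1})$ for $(a_0,\dots,a_{k-1})\in\dom f$, with domain $\{(\ta,c)\mid\ta\in\dom f\}$, which is again finite.

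The second step is to check that $g$ really is a $(k+1)$-ary partial polymorphism of $\bA$. Using the explicit characterization of partial polymorphisms given in the preliminaries: for $\varrho\in L$ and tuples $(\ta_1,c),\dots,(\ta_{\ar(\varrho)},c)$ in $\dom g$, the columns of the relevant matrix are exactly the columns coming from $\ta_1,\dots,\ta_{\ar(\varrho)}$ together with one extra column consisting of $c$ repeated $\ar(\varrho)$ times. The first $k$ column-conditions being in $\varrho_\bA$ is precisely the hypothesis that makes $f$ produce a tuple in $\varrho_\bA$; the extra constant column $(c,\dots,c)$ is in $\varrho_\bA$ iff it is, and in that case the output tuple of $g$ equals the output tuple of $f$, which lies in $\varrho_\bA$ by $f$ being a partial polymorphism. (If the constant column is not in $\varrho_\bA$, the implication is vacuously satisfied.) Hence $g$ is a $(k+1)$-ary local polymorphism.

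The third step applies the hypothesis: since $\bA$ is $(k+1)$-polymorphism-homogeneous, $g$ extends to some $\hat g\in\Pol^{(k+1)}(\bA)$. Now define $\hat f\colon A^k\to A$ by $\hat f(a_0,\dots,a_{k-1}):=\hat g(a_0,\dots,a_{k-1},c)$. One checks $\hat f\in\Pol^{(k)}(\bA)$: this is the standard fact that substituting a constant into one argument of a polymorphism yields a polymorphism (it is the composition of $\hat g$ with the homomorphism $\bA^k\to\bA^{k+1}$ sending $\tx\mapsto(\tx,(c,\dots,c))$, where $(c,\dots,c)\in A^k$ is fixed pointwise by the diagonal; more elementarily, one verifies the column condition directly as in the previous step). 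Finally, $\hat f$ extends $f$: for $\ta\in\dom f$ we have $\hat f(\ta)=\hat g(\ta,c)=g(\ta,c)=f(\ta)$.

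I do not expect a genuine obstacle here; the only thing to be a little careful about is the degenerate case $A=\emptyset$ (or, more precisely, making sure the chosen constant $c$ exists), and the bookkeeping in verifying that $g$ satisfies the column characterization of partial polymorphisms — the one subtlety being that one must handle the case where the appended constant column $(c,\dots,c)$ fails to lie in $\varrho_\bA$, for which the required implication holds vacuously.
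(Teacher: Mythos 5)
Your Step 3 contains a genuine gap: from $\hat g\in\Pol^{(k+1)}(\bA)$ it does \emph{not} follow that $\hat f(\tx):=\hat g(\tx,c)$ is a polymorphism of $\bA$. The ``standard fact'' you invoke is a fact about clones that contain the constant operations; $\Pol(\bA)$ need not contain them. Equivalently, the map $\bA^k\to\bA^{k+1}$, $\tx\mapsto(\tx,c)$, is a homomorphism only if for every $\varrho\in L$ (with $\varrho_\bA\neq\emptyset$) the constant tuple $(c,\dots,c)$ of length $\ar(\varrho)$ lies in $\varrho_\bA$; for irreflexive relations this fails for every choice of $c$. Concretely, take $\bA=K_2$ with vertices $0,1$, $k=1$, $c=0$, and let $\hat g$ be the binary polymorphism with $\hat g(0,0)=0$, $\hat g(1,1)=1$, $\hat g(0,1)=1$, $\hat g(1,0)=0$ (it maps edges of $K_2^2$ to edges). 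Then $\hat f(x)=\hat g(x,0)$ is the constant map with value $0$, which is not an endomorphism of $K_2$. So the function you produce need not be a polymorphism, even though $K_2$ is polymorphism-homogeneous and the local polymorphism $f$ you started from does extend; your argument simply does not exhibit an extension. (Your Steps 1--2 are fine: precisely because the premise of the preservation implication for $g$ \emph{includes} the condition $(c,\dots,c)\in\varrho_\bA$, the padded partial map is a partial polymorphism; the asymmetry is that this condition helps you when checking $g$ but hurts you when you try to specialize $\hat g$ back.)

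The paper avoids the constant altogether: instead of appending a fixed $c$, it appends a \emph{repeated coordinate}. It pads $f$ to $\hat f:\bD\times\bD_k\prtto\bA$, $\hat f(a_0,\dots,a_{k-1},x):=f(a_0,\dots,a_{k-1})$, where $\bD_k$ is induced by the last coordinates of $\dom f$; this is a partial polymorphism because it factors as $f\circ e$ with $e$ the projection homomorphism. After extending $\hat f$ to $\hat g\in\Pol^{(k+1)}(\bA)$, it specializes via the map $\iota:(a_0,\dots,a_{k-1})\mapsto(a_0,\dots,a_{k-1},a_{k-1})$, which, unlike $\tx\mapsto(\tx,c)$, is always an embedding $\bA^k\injto\bA^{k+1}$ (the duplicated column imposes no new condition). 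Then $g:=\hat g\circ\iota$ is a genuine $k$-ary polymorphism, and since $\iota$ maps $\bD$ into $\bD\times\bD_k$ with $e\circ\iota\restr_\bD=\mathrm{id}_\bD$, it extends $f$. If you want to keep the spirit of your construction, you could repair it by defining your padded map on $\{(\ta,a_{k-1})\mid\ta\in\dom f\}$ (value $f(\ta)$) rather than on $\{(\ta,c)\mid\ta\in\dom f\}$, and specializing the extension along $\iota$; but as written, with an arbitrary constant $c$, the proof does not go through.
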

\begin{proof}
	Let $f$ be a $k$-ary local polymorphism of $\bA$ with domain $\bD$. Moreover, let $\bD_k$ be the (finite) substructure of $\bA$ that is induced by 
	\[
	e_{k-1}(D) = \{d_{k-1}\mid \exists d_0,\dots,d_{k-2} : (d_0,\dots,d_{k-1})\in D\}.
	\] 
	First we construct a $(k+1)$-ary local polymorphism $\hat{f}: \bA^{k+1}\prtto\bA$ with domain $\bD\times\bD_k$ by setting
	\[ \hat{f}(a_0,\ldots, a_{k-1},x):= f(a_0,\ldots,a_{k-1})\qquad\text{for all $x\in D_k$.}\] 
	More precisely, $\hat{f}$ is the unique homomorphism that makes the following diagram commutative:
	\begin{equation}
	\begin{psmatrix}
		\bA^{k}   & \bD          & \bA\\
		\bA^{k+1} & \bD\times\bD_k
		\ncline[hookwidth=1.5mm]{H->}{1,2}{1,1}^{=}
		\ncline[hookwidth=1.5mm]{H->}{2,2}{2,1}^{=}
		\ncline{->>}{2,1}{1,1}<{e_{\{0,\ldots,k-1\}}}
		\ncline{->>}{2,2}{1,2}<{e}
		\ncline{->}{1,2}{1,3}^{f}
		\ncline[linestyle=dashed]{->}{2,2}{1,3}_{\hat{f}}
	\end{psmatrix}
	\end{equation}
	where $e:\bD\times\bD_k\epito\bD$ is the projection homomorphism onto $\bD$.
	So in particular $\hat{f}=f\circ e$, and $\dom\hat{f}$ is finite. Since $\bA$ is $(k+1)$-po\-ly\-mor\-phism-ho\-mo\-ge\-ne\-ous, $\hat{f}$ can be extended to a polymorphism $\hat{g}$ of $\bA$. Now we can define a polymorphism $g: \bA^k\to \bA$: For this consider the embedding
	\[\iota : \bA^k \injto \bA^{k+1}\quad (a_0,\ldots,a_{k-1})\mapsto (a_0,\ldots, a_{k-1},a_{k-1}).\]
	and its restriction $\iota\restr_\bD: \bD\to\bD\times\bD_k$. Note that $\iota$ and $\iota\restr_\bD$ are right-inverses of $e_{\{0,\ldots,k-1\}}$ and $e$, respectively. Hence the following diagram commutes:
	\[
	\begin{psmatrix}
		\bA^{k}   & \bD          & \bA\\
		\bA^{k+1} & \bD\times\bD_k
		\ncline[hookwidth=1.5mm]{H->}{1,2}{1,1}^{=}
		\ncline[hookwidth=1.5mm]{H->}{2,2}{2,1}^{=}
		\ncline{<-H}{2,1}{1,1}<{\iota}
		\ncline{<-H}{2,2}{1,2}<{\iota\restr_\bD}
		\ncline{->}{1,2}{1,3}^{f}
		\ncline[linestyle=dashed]{->}{2,2}{1,3}_{\hat{f}}
		\ncarc[arcangle=-50,ncurv=1]{->}{2,1}{1,3}_{\hat{g}}
	\end{psmatrix}
	\]
	\vspace*{1cm}

	Now, with $g=\hat{g}\circ\iota$, and using that the above diagram commutes, we obtain that $g$ is indeed an extension of $f$ to a polymorphism of $\bA$.
\end{proof}

Let $\bA$ be a relational structure. Then $\bA$ is called \emph{weakly po\-ly\-mor\-phism-ho\-mo\-ge\-ne\-ous} if for all $n\in\bN\setminus\{0\}$, for every finite structure $\bC\le\bA^n$, for all $\bB\le\bC$ and for all homomorphisms $h:\bB\to\bA$ there exists a homomorphism $\hat{h}:\bC\to\bA$ that extends $h$ to $\bC$. In other words, the following diagram commutes:
\[
\begin{psmatrix}
	[name=B]\bB & [name=A]\bA\\
	[name=C]\bC
	\ncline{H->}{B}{C}<{=}
	\ncline{->}{B}{A}^{h}
	\ncline{->}{C}{A}_{\hat{h}}
\end{psmatrix}
\]
\begin{lemma}
	Po\-ly\-mor\-phism-ho\-mo\-ge\-ne\-i\-ty implies weak po\-ly\-mor\-phism-ho\-mo\-ge\-ne\-i\-ty. Moreover, for countable relational structures the concepts of po\-ly\-mor\-phism-ho\-mo\-ge\-ne\-i\-ty and weak po\-ly\-mor\-phism-ho\-mo\-ge\-ne\-i\-ty coincide.
\end{lemma}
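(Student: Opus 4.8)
The plan is to prove the two assertions separately. The first implication is immediate from the definitions. The second one (for countable carriers) is a one-sided ``going forth'' construction, in which countability of the carrier is used in an essential way in order to exhaust $A^k$ by finite substructures.

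\textbf{Polymorphism-homogeneity implies weak polymorphism-homogeneity.} Here I would fix $n\in\bN\setminus\{0\}$, a finite $\bC\le\bA^n$, a substructure $\bB\le\bC$, and a homomorphism $h\colon\bB\to\bA$. The key observation is that, since $\bB$ is finite and $\bB\le\bA^n$, the map $h$ is by definition an $n$-ary local polymorphism of $\bA$. By $n$-polymorphism-homogeneity it extends to a global polymorphism $g\in\Pol^{(n)}(\bA)$, that is, to a homomorphism $g\colon\bA^n\to\bA$. Restricting $g$ to $\bC$ yields a homomorphism $\hat h:=g\restr_\bC\colon\bC\to\bA$ extending $h$, which is exactly what weak polymorphism-homogeneity demands. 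There is no obstacle in this direction.

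\textbf{For countable $\bA$, weak polymorphism-homogeneity implies polymorphism-homogeneity.} Assume $A$ is countable and $\bA$ is weakly polymorphism-homogeneous. I would fix $k\in\bN\setminus\{0\}$ and a $k$-ary local polymorphism $f\colon\bA^k\prtto\bA$ with finite domain $\bD\le\bA^k$, and produce a global extension by exhausting $A^k$. Since $A$, and hence $A^k$, is countable, enumerate $A^k=\{\ta_0,\ta_1,\ta_2,\dots\}$ and build an increasing chain of finite substructures $\bD=\bD_0\le\bD_1\le\bD_2\le\cdots$ of $\bA^k$ together with homomorphisms $f_m\colon\bD_m\to\bA$ such that $f_0=f$, $f_{m+1}$ extends $f_m$, and $\ta_m$ lies in the carrier of $\bD_{m+1}$. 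The inductive step from $f_m$ to $f_{m+1}$ is precisely an application of weak polymorphism-homogeneity with its parameter set to $n:=k$: let $\bC_m\le\bA^k$ be the substructure induced on $D_m\cup\{\ta_m\}$ (finite, and containing $\bD_m$ as a substructure), and extend the homomorphism $f_m\colon\bD_m\to\bA$ to some $\hat f_m\colon\bC_m\to\bA$; then put $\bD_{m+1}:=\bC_m$ and $f_{m+1}:=\hat f_m$. Finally $\bar f:=\bigcup_{m\ge 0}f_m$ is a well-defined total map $A^k\to A$ (the carriers of the $\bD_m$ exhaust $A^k$ and the $f_m$ cohere), it extends $f$, and it is a homomorphism $\bA^k\to\bA$, because any single instance of a relational constraint for some $\varrho\in L$ involves only finitely many elements of $A^k$, all of which lie in the carrier of some $\bD_m$, where $\bar f$ agrees with the homomorphism $f_m$. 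Thus $\bar f\in\Pol^{(k)}(\bA)$ extends $f$, so $\bA$ is $k$-polymorphism-homogeneous for every $k$, hence polymorphism-homogeneous.

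The only genuine subtlety, and the step I would take most care over, is the bookkeeping in the chain construction: at every stage $\bD_m$ must remain a \emph{finite} substructure of $\bA^k$ so that weak polymorphism-homogeneity is applicable, while nonetheless the carriers must exhaust all of $A^k$; adding exactly one new enumerated tuple $\ta_m$ per step achieves both. Countability of the carrier is exactly what makes such an exhaustion possible, which is why the equivalence is stated only for countable structures (for uncountable ones one would instead need a hypothesis, like the interpolation condition, allowing partial polymorphisms of arbitrary domain to be extended).
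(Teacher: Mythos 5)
Your proposal is correct and follows essentially the same route as the paper: the first direction is the same restriction-of-a-global-extension argument, and the second is the same chain construction, exhausting the countable $A^k$ by finite substructures, extending step by step via weak polymorphism-homogeneity, and taking the union (which is a homomorphism since each relational constraint involves only finitely many tuples).
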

\begin{proof}
	Let $\bA$ be a polymorphism-homogeneous structure, let $n\in\bN\setminus\{0\}$, let $\bC$ be a finite substructure of $\bA^n$, and let $\bB\le\bC$. Finally, let $h:\bB\to\bA$. Then $h$ defines a local  $n$-ary polymorphism of $\bA$ with domain $\bB$. Since $\bA$ is polymorphism-homogeneous, $h$ can be extended to an $n$-ary polymorphism of $\bA$. Now we take $\hat{h}:=g\restr_\bC$, and obtain at once that $\hat{h}$ extends $h$ to $\bC$. 
	
	Let now $\bA$ be a countable weakly polymorphism-homogeneous relational structure. 
	Let $f:\bA^k\prtto\bA$ be a local polymorphism of $\bA$. Since $\bA^k$ is countable, it can be written as a union of a chain $(\bD_i)_{i\in\bN}$ of finite substructures. Without loss of generality, we may assume that $\bD_0$ is the domain of $f$. Now, inductively, we construct a series $(h_i)_{i\in\bN}$ such that $h_i:\bD_i\to\bA$ and such that whenever $j>i$, then $h_j\restr_{\bD_i}=h_i$. We set $h_0:= f$. Now, if $h_i$ has already been defined, then, by the weak polymorphism homogeneity of $\bA$, $h_i$ can be extended to a homomorphism from $\bD_{i+1}$ to $\bA$. Take any such extension and call it $h_{i+1}$. Thus the desired chain of homomorphisms is defined. Finally, we define $h$ to be the union over all $h_i$. Now it is easy to see that $h$ is indeed a $k$-ary polymorphism of $\bA$ that extends $f$. 
\end{proof}

\section{The Galois connections between relations and positive primitive types}\label{sec4}
\subsection*{Galois Connections}
Let $\bA=(A,\le_\bA)$ and $\bB=(B,\le_\bB)$ be partially ordered sets, and let $\alpha: A\to B$ and $\beta_B\to A$ be functions. Then the pair $(\alpha,\beta)$ is called \emph{Galois connection} between $\bA$ and $\bB$ if for all $a\in A$ and for all $b\in B$ it holds that 
\[a\le_\bA\beta(b) \iff b\le_\bB\alpha(a).\]
In this paper, we will restrict our attention to the case where $\bA=(\cP(G),\subseteq)$ and $\bB=(\cP(M),\subseteq)$. Following G.~Birkhoff \cite{Bir40}, Galois connections in this special setting are called \emph{polarities}. It was shown by Ore \cite{Ore44}, that for every polarity $(\alpha,\beta)$ between $\bA$ and $\bB$ there exists a relation $I\subseteq G\times M$ such that for every $S\subseteq G$ and for every $T\subseteq M$ we have 
\begin{align*}
	\alpha(S)&= \{m\in M\mid \forall g\in S: (g,m)\in I\},\\
	\beta(T) &= \{g\in G\mid \forall m\in T: (g,m)\in I\}.
\end{align*} 
Moreover, every relation $I\subseteq G\times M$ induces a polarity between $\bA$ and $\bB$ in this way. By this every polarity between $\bA$ and $\bB$ is completely determined by the triple $(G,M,I)$. According to \cite{GanWil99}, such triples are also called \emph{(formal) contexts}. 

If $(\alpha,\beta)$ is a polarity between $\bA$ and $\bB$, then $\alpha\circ\beta$ and $\beta\circ\alpha$ are closure operators on $M$ and $G$. The corresponding closed sets are called \emph{Galois-closed sets} of $(\alpha,\beta)$. The Galois-closed subsets of $G$ are also called \emph{extents} and the Galois-closed subsets of $M$ are called \emph{intents}. Both, the set of extents and the set of intents of $(\alpha,\beta)$ form \emph{closure-systems} --- i.e., they are closed with respect to arbitrary intersections, $G$ is an extent,  and $M$ is an intent.  For an indepth treatment of the theory of Galois connections, we refer the reader to \cite{DenErnWis04}.

\subsection*{Positive primitive types} 
Let $\bA=\str{A}{L}$ be a relational structure over the relational signature $L$. Following \cite{Hod97}, with $L_{\omega\omega}$ we denote the full first order language with predicate symbols from $L$. 

As usual, every formula $\varphi(x_1,\dots,x_m)$ from $L_{\omega\omega}$ defines an $m$-ary relation $\varphi^\bA$ on $A$ where $\ta\in\varphi^\bA$ if $\varphi$ holds in $\bA$ when interpreting $x_i$ through $a_i$, for $i=1,\dots,m$. 
In that case, we say that $\ta$ fulfills $\varphi$ in $\bA$ and write $\bA\models\varphi(\ta)$. 

A set of positive primitive formulae in $L_{\omega\omega}$ with free variables in $\{x_1,\dots,x_m\}$ is called an \emph{$m$-ary positive primitive type}. For a relation $\varrho\subseteq A^m$ we define
$\ppTp_\bA(\varrho)$ to be the set of all positive primitive formulae $\varphi(x_1,\dots,x_m)$ from $L_{\omega\omega}$ such that $\varrho\subseteq \varphi^\bA$. On the other hand, for an $m$-ary positive primitive type $\Psi$, we define  
\[	\Psi^\bA := \bigcap_{\varphi\in\Psi}\varphi^\bA.\]
The operators $\varrho\mapsto\ppTp_\bA(\varrho)$ and $\Psi\mapsto\Psi^\bA$ form a Galois-connection between the $m$-ary relations on $A$ and the $m$-ary positive primitive types over $L_{\omega\omega}$. The Galois-closed types are called \emph{closed positive primitive types} of $\bA$. 

A closed $m$-ary positive primitive type over $\bA$ is called \emph{principal} if it is generated by one of its elements. Types that are of the shape $\ppTp_\bA(\tau)$ for some finite relation $\tau\subseteq A^m$ are called \emph{complete $m$-ary positive primitive types} over $\bA$.

If $\Psi$ is an $m$-ary positive primitive type, then by $\Psi^{(k)}$ we denote the set of all formulae from $\Psi$ that are logically equivalent to a positive primitive formula of of the shape
\[ \exists x_{m+1},\dots, x_{m+k} \psi(x_1,\dots,x_{m+k}),\]
where $\psi(x_1,\dots,x_{m+k})$ is a conjunction of atoms. Instead of $(\ppTp_\bA(\tau))^{(k)}$ we will write also $\ppTp_\bA^{(k)}(\tau)$.

If $\Psi$ is a principal positive primitive type over $\bA$, then the relation $\Psi^\bA$ is called \emph{positive primitively definable} over $\bA$. The set of all positive primitively definable relations (of all arities) over $\bA$ is called the \emph{relational algebra} generated by $\bA$. It is denoted by $[\bA]_\RA$, while its $m$-ary part is denoted by $[\bA]_\RA^{(m)}$. 

In general, the set of Galois-closed relations (of arbitrary arities) over $\bA$ will be denoted by $\overline{[\bA]_\RA}$. 

\subsection*{Algebraic closure systems}
Recall that a closure system $\cC$ on a set $A$ is called \emph{algebraic} if there exists some algebraic structure $\bA$ with carrier $A$ whose closure system of universes of subalgebras is equal to $\cC$.   

A closure system is called \emph{inductive} if it is closed with respect to unions of chains of closed sets. There is another equivalent way to define inductive closure systems. A subset $M$ of a given closure system is called \emph{upwards directed} if any two elements of $M$ are contained in a third. Now we have that a closure system is inductive if and only if it is closed with respect to unions of upwards directed sets of closed sets (cf. \cite{MaySte64}).

We have the following classical result that characterizes algebraic closure systems (cf. \cite{Sch52}):
\begin{theorem}[{Schmidt \cite[Hauptsatz]{Sch52}}]\label{schmidt}
	Let $\cC$ be a closure system on a set $A$, and let $C:\cP(A)\to\cC$ be the associated closure operator. Then the following are equivalent:
	\begin{enumerate}
		\item $\cC$ is an algebraic closure system,
		\item $\cC$ is an inductive closure system,
		\item $\forall S\subseteq A: C(S)=\bigcup\limits_{\substack{T\subseteq S\\T\text{ finite}}} C(T)$.\qed
	\end{enumerate}
\end{theorem}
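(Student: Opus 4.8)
The plan is to run the implications $(1)\Rightarrow(3)\Rightarrow(2)$ and then the substantive step $(3)\Rightarrow(1)$, with a short directed‑union argument for $(2)\Rightarrow(3)$. The equivalence of (2) and (3) will rest on the observation that for any $S\subseteq A$ the family $\{C(T)\mid T\subseteq S\text{ finite}\}$ is upwards directed, because $C(T_1)\cup C(T_2)\subseteq C(T_1\cup T_2)$ by monotonicity of $C$ and $T_1\cup T_2$ is again finite. Granting (2), this directed union of closed sets is closed, it contains $S$ since $a\in C(\{a\})$ for each $a\in S$, so it contains $C(S)$; the reverse inclusion is immediate, giving (3). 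Conversely, granting (3), if $\mathcal{D}$ is an upwards directed family of closed sets with union $S$, then every finite $T\subseteq S$ already lies inside a single member of $\mathcal{D}$ (pick a member containing each of the finitely many points of $T$ and use directedness), hence $C(T)\subseteq S$ and $C(S)=\bigcup_T C(T)\subseteq S$, so $S$ is closed and (2) holds.

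For $(1)\Rightarrow(3)$, if $\cC$ is the system of subuniverses of an algebra $\bA$ on $A$, then $C(S)$ is the set of all values obtained from members of $S$ by finitely many applications of the fundamental operations of $\bA$; each such value uses only finitely many elements of $S$, hence lies in $C(T)$ for a suitable finite $T\subseteq S$, and the reverse inclusion is trivial. Here I would either cite this standard description of the generated‑subuniverse operator or reprove it by the usual bottom‑up generation argument.

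The heart of the matter is $(3)\Rightarrow(1)$, which I would prove by exhibiting a witnessing algebra. For every $n\in\bN$, every tuple $\ta=(a_1,\dots,a_n)\in A^n$ and every $b\in C(\{a_1,\dots,a_n\})$, introduce an $n$‑ary operation $f_{\ta,b}\colon A^n\to A$ with $f_{\ta,b}(\ta)=b$ and $f_{\ta,b}(\tx)=x_1$ whenever $\tx\ne\ta$ (for $n=0$ this is simply the constant $b\in C(\emptyset)$); since $A$ is a set there are only set‑many such operations, so this defines an algebra $\bA$ on $A$. It remains to check that the subuniverses of $\bA$ are precisely the $C$‑closed subsets of $A$. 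If $B$ is closed, then for any operation $f_{\ta,b}$ and any $\tx\in B^n$ we have either $f_{\ta,b}(\tx)=x_1\in B$, or $\tx=\ta$, in which case $\{a_1,\dots,a_n\}\subseteq B$ forces $b\in C(\{a_1,\dots,a_n\})\subseteq C(B)=B$; so $B$ is a subuniverse. Conversely, if $B$ is a subuniverse and $b\in C(B)$, then by (3) there is a finite $T=\{a_1,\dots,a_n\}\subseteq B$ with $b\in C(T)$, and evaluating $f_{\ta,b}$ at a tuple $\ta$ enumerating $T$ (which lies in $B^n$) yields $b\in B$; hence $B$ is closed. Thus $\cC$ is exactly the closure system of subuniverses of $\bA$, i.e.\ it is algebraic. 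I expect the one genuinely delicate point to be choosing the operations $f_{\ta,b}$ so that both inclusions ``closed $\Rightarrow$ subuniverse'' and ``subuniverse $\Rightarrow$ closed'' go through at once; the nullary case (a nonempty $C(\emptyset)$) and the remark that the operations form a set rather than a proper class each need a word but present no real difficulty.
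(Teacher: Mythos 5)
Your proposal is correct, but there is nothing in the paper to compare it against: the paper states this theorem with a \qed and simply cites Schmidt's Hauptsatz \cite{Sch52}, so your argument is a self-contained proof of a result the authors take as known. What you give is the standard universal-algebraic proof: the cycle $(1)\Rightarrow(3)\Leftrightarrow(2)$ together with the witnessing algebra for $(3)\Rightarrow(1)$, whose ``pick-out'' operations $f_{\ta,b}$ (sending the designated tuple $\ta$ to $b\in C(\{a_1,\dots,a_n\})$ and everything else to its first coordinate, with constants for $C(\emptyset)$) make the subuniverses coincide exactly with the $C$-closed sets; both inclusions check out, including the nullary/empty-set cases you flag. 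The one point worth making explicit is that your $(2)\Leftrightarrow(3)$ arguments use inductivity in the form ``closed under unions of upwards directed families,'' whereas the paper defines inductive via unions of chains; the two formulations are equivalent, but that equivalence is itself a (nontrivial) result which the paper imports from \cite{MaySte64}, so you should either invoke that equivalence as the paper does or note that chains suffice trivially for $(3)\Rightarrow(2)$ while $(2)\Rightarrow(3)$ is where the directed form is genuinely used. With that citation in place, your sketch fills in precisely what the paper leaves to the literature, and the $(1)\Rightarrow(3)$ step is, as you say, the routine description of the generated-subuniverse operator.
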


\subsection*{Weak oligomorphy and related properties}
Let $\bA$ be a relational structure. We say that 
\begin{itemize}
	\item  $\bA$ is \emph{weakly oligomorphic}  if for all $m\in \mathbb{N}\setminus\{0\}$ there are just finitely many $m$-ary positive primitively definable relations over $\bA$,
	\item $\bA$ is \emph{algebraic} if for all $m\in\bN$ the closure-system $\overline{[\bA]_\RA^{(m)}}$ is algebraic,
	\item $\bA$ is \emph{pp-atomic} if all complete positive primitive types over $\bA$ are principal,
	\item $\bA$ has \emph{principal positive primitive types}, if all closed positive primitive types over $\bA$ are principal.
\end{itemize}

\begin{lemma}\label{lem:add_imp}
	All weakly oligomorphic relational structures are algebraic, pp-atomic, and have principal positive primitive types. Moreover, if a relational structure has principal positive primitive types, then it is also pp-atomic. 
\end{lemma}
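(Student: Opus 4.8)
The plan is to prove the implications in the order that minimizes work: first establish the ``moreover'' clause (principal positive primitive types $\Rightarrow$ pp-atomic), since it is purely formal, and then show that weak oligomorphy implies each of the three properties, reusing the ``moreover'' clause where possible.

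First I would dispose of the last sentence. A complete type $\ppTp_\bA(\tau)$ (with $\tau\subseteq A^m$ finite) is in particular a \emph{closed} positive primitive type, because it arises as $\ppTp_\bA$ applied to a relation; the operators $\varrho\mapsto\ppTp_\bA(\varrho)$ and $\Psi\mapsto\Psi^\bA$ form a Galois connection, so images under $\ppTp_\bA$ are exactly the Galois-closed (intent) sets. Hence if \emph{all} closed positive primitive types are principal, then in particular every complete one is, which is the definition of pp-atomicity. So the ``moreover'' part is immediate.

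Next, the bulk: assume $\bA$ is weakly oligomorphic, so for each $m$ there are only finitely many $m$-ary positive primitively definable relations, i.e.\ the closure system $\overline{[\bA]_\RA^{(m)}}$ is finite. \emph{Algebraic:} a finite closure system is trivially inductive (a chain of closed sets has a largest element, which is its union), hence algebraic by Schmidt's theorem (Theorem~\ref{schmidt}); this gives algebraicity. \emph{Principal positive primitive types:} let $\Psi$ be a closed $m$-ary positive primitive type, so $\Psi = \ppTp_\bA(\varrho)$ where $\varrho = \Psi^\bA\in\overline{[\bA]_\RA^{(m)}}$. I must produce a single $\varphi\in\Psi$ with $\varphi^\bA = \varrho$, equivalently $\varphi^\bA \subseteq \varrho$ (the reverse inclusion is automatic since $\varphi\in\Psi$ means $\varrho\subseteq\varphi^\bA$). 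Here is where weak oligomorphy does the real work: the relations $\varphi^\bA$, as $\varphi$ ranges over $\Psi$, are all positive primitively definable and there are only finitely many of them, say with defining formulas $\varphi_1,\dots,\varphi_n\in\Psi$; then $\varphi := \varphi_1\wedge\cdots\wedge\varphi_n$ is again a positive primitive formula (conjunction of pp-formulas is pp), lies in $\Psi$ since $\Psi$ is a type closed under the consequences forced by containing each $\varphi_i$ — more carefully, $\varrho\subseteq\varphi_i^\bA$ for all $i$ gives $\varrho\subseteq\varphi^\bA$, so $\varphi\in\ppTp_\bA(\varrho)=\Psi$ — and $\varphi^\bA = \bigcap_i \varphi_i^\bA = \bigcap_{\psi\in\Psi}\psi^\bA = \Psi^\bA = \varrho$. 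Thus $\Psi$ is generated by $\varphi$, so $\bA$ has principal positive primitive types. Finally, pp-atomicity follows from the ``moreover'' clause just proved, or directly by the same argument restricted to complete types.

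\textbf{Main obstacle.} The only genuinely delicate point is the argument that from finitely many definable relations one can pull out finitely many \emph{formulas} in $\Psi$ whose conjunction still lies in $\Psi$ and defines $\Psi^\bA$; one must be careful that ``finitely many definable relations'' refers to the relations $\varphi^\bA$ and not to the (infinitely many) formulas, and that the finite conjunction is genuinely positive primitive and genuinely an element of the type $\Psi$. Everything else (the Galois-connection bookkeeping and the ``finite $\Rightarrow$ inductive'' step feeding into Schmidt's theorem) is routine.
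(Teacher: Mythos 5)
Your proposal is correct, and since the paper dismisses this lemma with the single word ``straightforward'', your argument is exactly the intended routine one: weak oligomorphy makes $\overline{[\bA]_\RA^{(m)}}$ finite (intersections of a finite family), hence inductive and algebraic by Theorem~\ref{schmidt}, and a finite conjunction of representatives $\varphi_1,\dots,\varphi_n\in\Psi$ realizes $\Psi^\bA$ and generates the closed type $\Psi$, while the ``moreover'' clause is immediate because complete types $\ppTp_\bA(\tau)$ are Galois-closed. The one point you flagged --- that the conjunction must be replaced by a logically equivalent prenex positive primitive formula, which still lies in $\Psi=\ppTp_\bA(\Psi^\bA)$ --- is handled adequately and poses no gap.
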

\begin{proof}
	straightforward.
\end{proof}

\begin{lemma}\label{princint}
	Let $\bA$ be a relational structure. Then the following are equivalent.
	\begin{enumerate}
		\item $\bA$ has principal positive primitive types.
		\item $[\bA]_\RA= \overline{[\bA]_\RA}$.
		\item $[\bA]^{(m)}_\RA$ is closed with respect to arbitrary intersections for every $m\in\bN\setminus\{0\}$.
	\end{enumerate}
\end{lemma}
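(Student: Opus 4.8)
The plan is to prove the cycle $(1)\Rightarrow(2)\Rightarrow(3)\Rightarrow(1)$. Throughout, recall that $[\bA]_\RA$ consists of the relations $\Psi^\bA$ where $\Psi$ ranges over \emph{principal} closed positive primitive types, whereas $\overline{[\bA]_\RA}$ consists of $\Psi^\bA$ for \emph{arbitrary} closed positive primitive types; equivalently, $\overline{[\bA]_\RA^{(m)}}$ is the closure system of extents of the polarity $(\ppTp_\bA,\;\cdot^\bA)$ between $m$-ary relations and $m$-ary positive primitive types. In particular $\overline{[\bA]_\RA^{(m)}}$ is always closed under arbitrary intersections, and it always contains $[\bA]_\RA^{(m)}$.

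For $(1)\Rightarrow(2)$: the inclusion $[\bA]_\RA\subseteq\overline{[\bA]_\RA}$ is automatic. Conversely, if $\varrho\in\overline{[\bA]_\RA}$, write $\varrho=\Psi^\bA$ where $\Psi=\ppTp_\bA(\varrho)$ is a closed positive primitive type. By hypothesis $\Psi$ is principal, say generated by a single formula $\varphi\in\Psi$; then the principal type generated by $\varphi$ has the same Galois closure as $\Psi$, hence $\varphi^\bA=\Psi^\bA=\varrho$, so $\varrho$ is positive primitively definable, i.e. $\varrho\in[\bA]_\RA$. For $(2)\Rightarrow(3)$: since $\overline{[\bA]_\RA^{(m)}}$ is a closure system, it is closed under arbitrary intersections, and $(2)$ identifies it with $[\bA]_\RA^{(m)}$.

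The interesting implication is $(3)\Rightarrow(1)$. Let $\Psi$ be a closed $m$-ary positive primitive type and put $\varrho=\Psi^\bA$; I must produce a single formula in $\Psi$ generating it. For each $\varphi\in\Psi$ the relation $\varphi^\bA$ lies in $[\bA]_\RA^{(m)}$, and $\varrho=\bigcap_{\varphi\in\Psi}\varphi^\bA$. By $(3)$ this intersection again lies in $[\bA]_\RA^{(m)}$, so $\varrho=\vartheta^\bA$ for some single positive primitive formula $\vartheta$. It remains to check $\vartheta\in\Psi$: since $\varrho\subseteq\vartheta^\bA$ (indeed with equality) we have $\vartheta\in\ppTp_\bA(\varrho)=\ppTp_\bA(\Psi^\bA)=\Psi$, the last equality because $\Psi$ is Galois-closed. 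Thus $\vartheta$ generates $\Psi$ up to Galois closure, so $\Psi$ is principal.

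The one genuine subtlety — and the step I would be most careful about — is the bookkeeping around arities and the precise definition of $[\bA]_\RA^{(m)}$ versus $\overline{[\bA]_\RA^{(m)}}$: one must make sure that "closed under arbitrary intersections" in $(3)$ is exactly what lets us conclude that an arbitrary (not merely finite) intersection of pp-definable relations is pp-definable, and that a pp-definable relation, being of the form $\vartheta^\bA$, always has its defining formula inside its own pp-type. Both are immediate from the Galois-connection formalism of Section~\ref{sec4}, so no real calculation is needed; the proof is essentially an unwinding of definitions once the cycle of implications is set up.
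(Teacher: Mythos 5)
Your proposal is correct and follows essentially the same route as the paper: the same cycle $(1)\Rightarrow(2)\Rightarrow(3)\Rightarrow(1)$, with each implication argued by the same unwinding of the Galois connection (in fact you are slightly more explicit than the paper about why the defining formula $\vartheta$ belongs to $\Psi$ in $(3)\Rightarrow(1)$). No gaps.
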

\begin{proof}
	$(1)\Rightarrow (2)$ Suppose that $\bA$ has principal positive primitive types. Note that $[\bA]^{(m)}_\RA\subseteq\overline{[\bA]^{(m)}_\RA}$ always holds. Let $\sigma\in\overline{[\bA]^{(m)}_\RA}$, and let $\Psi=\ppTp_\bA(\sigma)$. Since $\Psi$ is closed, it follows that there exists a $\psi\in\Psi$ such that $\ppTp_\bA(\psi^\bA)=\Psi$. However, from this it follows that $\psi^\bA=\Psi^\bA=\sigma$. Hence $\sigma\in[\bA]^{(m)}_\RA$.

	$(2)\Rightarrow (3)$ Take any family $\{\sigma_i\}_{i\in I}$ of elements of $[\bA]^{(m)}_\RA$. We know that
	\[\sigma:=\bigcap_{i\in I}\sigma_i\in\overline{[\bA]^{(m)}_\RA}.\]
	So from the premise it follows that $\sigma\in[\bA]^{(m)}_\RA$.

	$(3)\Rightarrow (1)$ Let $\Psi$ be an arbitrary closed positive primitive type over $\bA$, and let $\sigma=\Psi^\bA$. Then
	\[\sigma=\bigcap_{\psi\in\Psi}\psi^\bA.\]
	Since $[\bA]^{(m)}_\RA$ is closed with respect to arbitrary intersections, we get that $\sigma\in[\bA]^{(m)}_\RA$. Hence, there exists a formula $\psi\in\Psi$ such that $\sigma=\psi^\bA$. Thus, $\Psi^\bA=\sigma=\psi^\bA$. This shows that $\Psi$ is principal.
\end{proof}

\subsection*{Polylocality} A relational structure $\bA$ will be called \emph{$k$-polylocal} if for every $m\in\bN$ and for every $\sigma\subseteq A^m$ holds
\begin{multline*}
	\sigma\in\overline{[\bA]^{(m)}_\RA}\iff \forall\tau\subseteq \sigma \text{ finite}, \tb\in A^m \,: \ppTp_\bA^{(k)}(\tau)\subseteq\ppTp_\bA^{(k)}(\tb)\Rightarrow \tb\in\sigma.
\end{multline*}
If $\bA$ is $0$-polylocal, then we call it \emph{polylocal}. 
\begin{lemma}\label{localg}
	If a relational structure $\bA$ is $k$-polylocal, then it is algebraic.
\end{lemma}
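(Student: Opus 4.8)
The plan is to use Schmidt's theorem (Theorem~\ref{schmidt}) applied to each closure system $\overline{[\bA]^{(m)}_\RA}$: it suffices to show that this closure system is inductive, i.e. closed under unions of chains. So fix $m\in\bN$, let $(\sigma_i)_{i\in\Lambda}$ be a chain (or an upwards directed family) of Galois-closed $m$-ary relations over $\bA$, and put $\sigma:=\bigcup_{i\in\Lambda}\sigma_i$. I want to show $\sigma\in\overline{[\bA]^{(m)}_\RA}$, and for this I will verify the right-hand side of the $k$-polylocality equivalence for $\sigma$.

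So let $\tau\subseteq\sigma$ be finite and let $\tb\in A^m$ satisfy $\ppTp_\bA^{(k)}(\tau)\subseteq\ppTp_\bA^{(k)}(\tb)$; I must show $\tb\in\sigma$. Since $\tau$ is finite and $\sigma$ is the union of the upwards directed family $(\sigma_i)$, there is a single index $i\in\Lambda$ with $\tau\subseteq\sigma_i$. Now I apply $k$-polylocality to the Galois-closed relation $\sigma_i$: because $\sigma_i\in\overline{[\bA]^{(m)}_\RA}$, the left-to-right direction of the polylocality biconditional gives that for every finite $\tau'\subseteq\sigma_i$ and every $\tb'\in A^m$, $\ppTp_\bA^{(k)}(\tau')\subseteq\ppTp_\bA^{(k)}(\tb')$ implies $\tb'\in\sigma_i$. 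Taking $\tau'=\tau$ and $\tb'=\tb$, our hypothesis yields $\tb\in\sigma_i\subseteq\sigma$, as desired.

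This shows the right-hand side of the polylocality condition holds for $\sigma$, and hence by $k$-polylocality $\sigma\in\overline{[\bA]^{(m)}_\RA}$. Thus $\overline{[\bA]^{(m)}_\RA}$ is closed under unions of upwards directed families of closed sets, i.e. it is inductive, so by Theorem~\ref{schmidt} it is algebraic. Since $m\in\bN$ was arbitrary, $\bA$ is algebraic by definition.

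The argument is almost entirely formal bookkeeping; the only point requiring a moment's care is matching the quantifier structure of the $k$-polylocality condition. A priori that condition, read for $\sigma$, quantifies over \emph{all} finite $\tau\subseteq\sigma$ and \emph{all} $\tb$, so the main thing to get right is that finiteness of $\tau$ is exactly what lets one absorb $\tau$ into a single member $\sigma_i$ of the directed family, after which the already-known polylocality instance for that Galois-closed $\sigma_i$ does all the work. No genuine obstacle is expected here.
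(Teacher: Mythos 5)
Your proof is correct and rests on the same engine as the paper's: Schmidt's Theorem~\ref{schmidt}, plus the observation that a finite $\tau$ contained in the union of an upwards directed family of Galois-closed sets already lies in a single member, whose polylocality instance (left-to-right direction of the biconditional) puts $\tb$ into that member, after which the right-to-left direction applied to the union concludes. The only real difference is which clause of Theorem~\ref{schmidt} you verify: you check inductivity (clause (2)) directly for an arbitrary directed family of closed sets, while the paper checks clause (3), showing by contradiction that the one specific union $\bigcup_{\tau\subseteq\sigma\text{ finite}}\bigl(\ppTp_\bA(\tau)\bigr)^\bA$ is Galois-closed and therefore equals the closure of $\sigma$; the two arguments use the polylocality biconditional in exactly parallel ways, so neither buys anything substantial over the other, though yours avoids the detour through a contradiction. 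One small point of hygiene: take your chains/directed families to be nonempty (as is implicit in Theorem~\ref{schmidt}, since an algebraic closure system need not contain $\emptyset$); for the empty family your localization step has no index $i$ to choose, and the union $\emptyset$ indeed need not be Galois-closed.
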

\begin{proof}
	Let $m\in\bN\setminus\{0\}$. The closure operator $C$ on $\overline{[\bA]_\RA^{(m)}}$ is given by
	\[ C: \sigma\mapsto \big(\ppTp_\bA(\sigma)\big)^\bA.\]
	Let $\sigma\subseteq A^m$. Suppose that 
	\[ 
	  \hat\sigma:=\bigcup_{\substack{\tau\subseteq\sigma\\\tau\text{ finite}}} C(\tau)
	\] 
	is not in $\overline{[\bA]_\RA^{(m)}}$. Then, since $\bA$ is $k$-polylocal, there exists a finite $\tau\subseteq\hat\sigma$ and some $\tb\in A^m\setminus\hat\sigma$, such that $\ppTp_\bA^{(k)}(\tau)\subseteq\ppTp_\bA(\tb)$. Since $\tau$ is finite, there exists a finite $\tilde\tau\subseteq\sigma$ such that $\tau\subseteq C(\tilde\tau)$. Again using that $\bA$ is $k$-polylocal, we obtain that $\tb\in C(\tilde\tau)\subseteq\hat\sigma$ --- contradiction. It follows that $\hat\sigma$ is equal to $C(\sigma)$, whence $\bA$ is algebraic.
\end{proof}

\begin{lemma}\label{l34}
	A relational structure $\bA$ is $k$-polylocal if and only if for every $\sigma\subseteq A^m$ holds
	\[ \sigma\in\overline{[\bA]^{(m)}_\RA}\iff \sigma=\bigcup_{\substack{\tau\subseteq\sigma\\\tau\text{ finite}}}\big(\ppTp_\bA^{(k)}(\tau)\big)^\bA.\]
\end{lemma}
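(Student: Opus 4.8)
The plan is to prove the biconditional in Lemma~\ref{l34} by unwinding the definition of $k$-polylocality and observing that, for every $m$ and every $\sigma\subseteq A^m$, the set
\[ \hat\sigma:=\bigcup_{\substack{\tau\subseteq\sigma\\\tau\text{ finite}}}\big(\ppTp_\bA^{(k)}(\tau)\big)^\bA \]
is always contained in $\sigma$ when $\sigma\in\overline{[\bA]^{(m)}_\RA}$ (since each $\ppTp_\bA^{(k)}(\tau)\supseteq\ppTp_\bA(\sigma)$ gives $(\ppTp_\bA^{(k)}(\tau))^\bA\subseteq(\ppTp_\bA(\sigma))^\bA=\sigma$), and it always contains $\sigma$ (taking singletons $\tau=\{\ta\}$ for $\ta\in\sigma$, or using that $\ta\in(\ppTp_\bA^{(k)}(\{\ta\}))^\bA$). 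So the real content is the equivalence, for $\sigma\in\overline{[\bA]^{(m)}_\RA}$, between ``$\tb\in\hat\sigma$'' and ``$\exists\tau\subseteq\sigma$ finite with $\ppTp_\bA^{(k)}(\tau)\subseteq\ppTp_\bA^{(k)}(\tb)$'', and the matching fact that membership in $(\ppTp_\bA^{(k)}(\tau))^\bA$ is governed by $\ppTp_\bA^{(k)}$-containment.

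First I would establish the key translation: for a finite $\tau\subseteq A^m$ and any $\tb\in A^m$, one has $\tb\in(\ppTp_\bA^{(k)}(\tau))^\bA$ if and only if $\ppTp_\bA^{(k)}(\tau)\subseteq\ppTp_\bA^{(k)}(\tb)$. The forward direction is immediate from the definitions; for the converse, note that since $\tau$ is finite, the type $\ppTp_\bA^{(k)}(\tau)$ is ``complete'' in the appropriate sense, and $\tb\in\varphi^\bA$ for every $\varphi\in\ppTp_\bA^{(k)}(\tau)$ says exactly $\tb\in(\ppTp_\bA^{(k)}(\tau))^\bA$. Granting this, the right-hand side of the displayed equivalence in Lemma~\ref{l34} unpacks as: $\tb\in\sigma$ whenever there is a finite $\tau\subseteq\sigma$ with $\ppTp_\bA^{(k)}(\tau)\subseteq\ppTp_\bA^{(k)}(\tb)$.

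Now I would argue both implications. For ``$\Rightarrow$'' (i.e., $k$-polylocal implies the union formula): given $\sigma\in\overline{[\bA]^{(m)}_\RA}$, the inclusion $\hat\sigma\subseteq\sigma$ is automatic as above; for $\sigma\subseteq\hat\sigma$, take $\ta\in\sigma$ and use $\tau=\{\ta\}$, noting $\ta\in(\ppTp_\bA^{(k)}(\{\ta\}))^\bA$. Conversely, if $\sigma\notin\overline{[\bA]^{(m)}_\RA}$ then I must show the union formula fails; but that is precisely what $k$-polylocality delivers: there is a finite $\tau\subseteq\sigma$ and $\tb\in A^m\setminus\sigma$ with $\ppTp_\bA^{(k)}(\tau)\subseteq\ppTp_\bA^{(k)}(\tb)$, hence $\tb\in(\ppTp_\bA^{(k)}(\tau))^\bA\subseteq\hat\sigma$, so $\hat\sigma\neq\sigma$. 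For ``$\Leftarrow$'', suppose the union formula holds and verify the defining biconditional of $k$-polylocality directly: the ``$\Rightarrow$'' direction of that biconditional (if $\sigma\in\overline{[\bA]^{(m)}_\RA}$ then the finiteness condition holds) follows because $\sigma=\hat\sigma$, so any finite $\tau\subseteq\sigma$ and $\tb$ with $\ppTp_\bA^{(k)}(\tau)\subseteq\ppTp_\bA^{(k)}(\tb)$ forces $\tb\in(\ppTp_\bA^{(k)}(\tau))^\bA\subseteq\hat\sigma=\sigma$; for the ``$\Leftarrow$'' direction, assume $\sigma$ satisfies the finiteness condition and show $\sigma\in\overline{[\bA]^{(m)}_\RA}$ by proving $\sigma=\hat\sigma$ (then apply the hypothesis, which says exactly that $\hat\sigma$-type sets are Galois-closed). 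Here $\hat\sigma\subseteq\sigma$ needs the finiteness condition plus the translation lemma; $\sigma\subseteq\hat\sigma$ is the singleton argument again.

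The main obstacle I anticipate is purely bookkeeping around the superscript $(k)$: one must be careful that $\ppTp_\bA^{(k)}(\tau)$ for finite $\tau$ still ``describes'' a relation in the sense that $\tb$ lying in $(\ppTp_\bA^{(k)}(\tau))^\bA$ is equivalent to $\ppTp_\bA^{(k)}(\tau)\subseteq\ppTp_\bA^{(k)}(\tb)$ — i.e., that restricting to $k$-quantifier formulae does not break the Galois-theoretic ``complete type'' behavior on finite sets. Once that translation lemma is pinned down, the rest is a routine comparison of the two formulations: the defining biconditional of $k$-polylocality and the ``$\sigma$ equals its own finite-type-union'' condition say the same thing, because the right-hand side of the polylocality biconditional is just the negation of ``$\hat\sigma\neq\sigma$ witnessed by some $\tb\notin\sigma$''.
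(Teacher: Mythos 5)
Your overall route is the same as the paper's (translate membership in $(\ppTp_\bA^{(k)}(\tau))^\bA$ into type containment and match the two biconditionals pointwise in $\sigma$), but the opening claim on which your ``$\Rightarrow$'' direction leans is false. You assert that for Galois-closed $\sigma$ the inclusion $\bigcup_{\tau\subseteq\sigma\text{ finite}}(\ppTp_\bA^{(k)}(\tau))^\bA\subseteq\sigma$ is automatic, justified by $\ppTp_\bA^{(k)}(\tau)\supseteq\ppTp_\bA(\sigma)$. That containment does not hold: $\ppTp_\bA(\sigma)$ contains positive primitive formulae of arbitrary quantifier depth, and these are not members of $\ppTp_\bA^{(k)}(\tau)$, which by definition consists only of formulae of the restricted shape. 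From $\tau\subseteq\sigma$ you get only $\ppTp_\bA^{(k)}(\sigma)\subseteq\ppTp_\bA^{(k)}(\tau)$, hence $(\ppTp_\bA^{(k)}(\tau))^\bA\subseteq(\ppTp_\bA^{(k)}(\sigma))^\bA$, and this last set can be strictly larger than $\sigma=(\ppTp_\bA(\sigma))^\bA$. For instance, with $k=0$ and a graph having an isolated vertex, $\sigma:=\big(\exists y\,\varrho(x,y)\big)^\bA$ is pp-definable, hence Galois-closed, yet the quantifier-free positive primitive type of any finite $\tau\subseteq\sigma$ puts no constraint on a single vertex, so $(\ppTp_\bA^{(0)}(\tau))^\bA=A\not\subseteq\sigma$. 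Indeed, if the inclusion were automatic for all closed $\sigma$, one half of the defining biconditional of $k$-polylocality would hold in every structure, which is exactly what the notion is designed to rule out.

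The gap is local and is repaired by the very move you already use in your ``$\Leftarrow$'' direction: in the ``$\Rightarrow$'' direction you have the hypothesis that $\bA$ is $k$-polylocal, so for a closed $\sigma$ the condition ``for all finite $\tau\subseteq\sigma$ and all $\tb$, $\ppTp_\bA^{(k)}(\tau)\subseteq\ppTp_\bA^{(k)}(\tb)\Rightarrow\tb\in\sigma$'' holds, and your translation observation turns this into $(\ppTp_\bA^{(k)}(\tau))^\bA\subseteq\sigma$ for every finite $\tau\subseteq\sigma$; the inclusion must be derived from the hypothesis, not declared automatic. With that substitution your argument is correct and coincides in substance with the paper's proof, which, for an arbitrary $\sigma$ and without any closedness assumption, proves the equivalence of the condition in the definition of $k$-polylocality with $\sigma=\bigcup_{\tau\subseteq\sigma\text{ finite}}(\ppTp_\bA^{(k)}(\tau))^\bA$ and then replaces equivalents in the two biconditionals. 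A minor point: your translation step needs neither the finiteness of $\tau$ nor any ``completeness'' of the type; it is immediate because every formula of $\ppTp_\bA^{(k)}(\tau)$ already has the restricted shape, so satisfaction by $\tb$ is the same as membership in $\ppTp_\bA^{(k)}(\tb)$.
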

\begin{proof}
	Let $\sigma\subseteq A^m$. We show:
	\begin{gather}
		\label{eq1}\forall\tau\subseteq \sigma \text{ finite}, \tb\in A^m \,: \ppTp_\bA^{(k)}(\tau)\subseteq\ppTp_\bA^{(k)}(\tb)\Rightarrow \tb\in\sigma \\\notag\Updownarrow\\
		\label{eq2}\sigma=\bigcup_{\substack{\tau\subseteq\sigma\\\tau\text{ finite}}}\big(\ppTp_\bA^{(k)}(\tau)\big)^\bA.
	\end{gather}
	``$\Rightarrow$'' Suppose, \eqref{eq1} holds. By the properties of Galois connections, we have $\tau\subseteq \big(\ppTp_\bA^{(k)}(\tau)\big)^\bA$, for all $\tau\subseteq A^m$. Thus we have 
	\[
	\sigma= \bigcup_{\substack{\tau\subseteq\sigma\\\tau\text{ finite}}}\tau\subseteq \bigcup_{\substack{\tau\subseteq\sigma\\\tau\text{ finite}}}\big(\ppTp_\bA^{(k)}(\tau)\big)^\bA.
	\] Let now $\tb\in \bigcup\limits_{\substack{\tau\subseteq\sigma\\\tau\text{ finite}}}\big(\ppTp_\bA^{(k)}(\tau)\big)^\bA$. Then there exists a finite $\tau\subseteq\sigma$, such that $\tb\in\big(\ppTp_\bA^{(k)}(\tau)\big)^\bA$. Using the  basic properties of Galois connections we obtain 
	\[ \ppTp_\bA^{(k)}(\tb)\supseteq \ppTp_\bA^{(k)}\Big(\big(\ppTp_\bA^{(k)}(\tau)\big)^\bA\Big) = \ppTp_\bA^{(k)}(\tau). \]
	From assumption \eqref{eq1}, we conclude that $\tb\in\sigma$. Thus \eqref{eq2} holds.
	
	``$\Leftarrow$''  Suppose, \eqref{eq2} holds. Let $\tau\subseteq\sigma$ be finite and let $\tb\in A^m$, such that $\ppTp_\bA^{(k)}(\tau)\subseteq\ppTp_\bA^{(k)}(\tb)$. Then, by the basic properties of Galois connections, we have 
	\[\tb\in \big(\ppTp_\bA^{(k)}(\tb)\big)^\bA\subseteq \big(\ppTp_\bA^{(k)}(\tau)\big)^\bA.\]
	By the assumption we have $\big(\ppTp_\bA^{(k)}(\tau)\big)^\bA\subseteq\sigma$. Thus we conclude $\tb\in\sigma$. Hence \eqref{eq1} holds.
\end{proof}

\begin{lemma}\label{mainlemma}
	Let $\bA$ be a relational structure. Then the following are equivalent:
	\begin{enumerate}
		\item $\bA$ is $k$-polylocal,
		\item $\bA$ is algebraic and for all $m\in\bN\setminus\{0\}$, and for all finite $\tau\subseteq A^m$ we have
		\[ \big(\ppTp_\bA(\tau)\big)^\bA= \big(\ppTp_\bA^{(k)}(\tau)\big)^\bA\]
	\end{enumerate}
\end{lemma}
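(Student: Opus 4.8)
The plan is to deduce the equivalence from three facts already available: the reformulation of $k$-polylocality in Lemma~\ref{l34}, Schmidt's characterization of algebraic closure systems (Theorem~\ref{schmidt}), and the standard inequalities of the Galois connection $\tau\mapsto\ppTp_\bA(\tau)$, $\Psi\mapsto\Psi^\bA$. As in the proof of Lemma~\ref{localg}, let $C\colon\sigma\mapsto(\ppTp_\bA(\sigma))^\bA$ denote the closure operator whose closed sets are exactly the members of $\overline{[\bA]_\RA^{(m)}}$. Two remarks will be used repeatedly: first, $\tau\subseteq C(\tau)$ for every $\tau\subseteq A^m$ (a basic property of the Galois connection); second, since $\ppTp_\bA^{(k)}(\tau)\subseteq\ppTp_\bA(\tau)$, intersecting over the larger set of formulae gives $C(\tau)=(\ppTp_\bA(\tau))^\bA\subseteq(\ppTp_\bA^{(k)}(\tau))^\bA$. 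Throughout I restrict to $m\ge 1$; the case $m=0$ is trivial, since $\overline{[\bA]_\RA^{(0)}}$ is a closure system on a one-element set.

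For $(1)\Rightarrow(2)$: algebraicity is exactly Lemma~\ref{localg}. To get the displayed identity it remains, in view of the inclusion $C(\tau)\subseteq(\ppTp_\bA^{(k)}(\tau))^\bA$ just noted, to prove the reverse inclusion $(\ppTp_\bA^{(k)}(\tau))^\bA\subseteq(\ppTp_\bA(\tau))^\bA$ for finite $\tau\subseteq A^m$. Put $\sigma:=(\ppTp_\bA(\tau))^\bA=C(\tau)$. This set is Galois-closed, so because $\bA$ is $k$-polylocal, Lemma~\ref{l34} yields $\sigma=\bigcup\{(\ppTp_\bA^{(k)}(\tau'))^\bA\mid\tau'\subseteq\sigma\text{ finite}\}$. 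Now $\tau$ is a finite subset of $\sigma$ (indeed $\tau\subseteq C(\tau)=\sigma$), hence it occurs among the $\tau'$ in this union, and therefore $(\ppTp_\bA^{(k)}(\tau))^\bA\subseteq\sigma=(\ppTp_\bA(\tau))^\bA$, as wanted.

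For $(2)\Rightarrow(1)$: by Lemma~\ref{l34} it suffices to establish, for every $m\ge 1$ and every $\sigma\subseteq A^m$, the equivalence $\sigma\in\overline{[\bA]_\RA^{(m)}}\iff\sigma=\bigcup\{(\ppTp_\bA^{(k)}(\tau))^\bA\mid\tau\subseteq\sigma\text{ finite}\}$. Applying the hypothesis of $(2)$ to each finite $\tau\subseteq\sigma$ rewrites the right-hand union as $\bigcup\{(\ppTp_\bA(\tau))^\bA\mid\tau\subseteq\sigma\text{ finite}\}=\bigcup\{C(\tau)\mid\tau\subseteq\sigma\text{ finite}\}$, and since $\bA$ is algebraic, $\overline{[\bA]_\RA^{(m)}}$ is an algebraic closure system, so item (3) of Theorem~\ref{schmidt} identifies this union with $C(\sigma)$. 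Thus the right-hand condition is literally ``$\sigma=C(\sigma)$'', i.e.\ ``$\sigma\in\overline{[\bA]_\RA^{(m)}}$'', and the biconditional is immediate.

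I do not foresee a genuine difficulty: the argument is essentially bookkeeping that threads together Lemma~\ref{l34}, Theorem~\ref{schmidt}, and the Galois-connection inequalities. The only spot that deserves care is the direction $(2)\Rightarrow(1)$, where one must check that the hypothesis $(\ppTp_\bA(\tau))^\bA=(\ppTp_\bA^{(k)}(\tau))^\bA$ may legitimately be substituted inside the union (it may, since each $\tau$ there is finite) and then recognise the resulting $\bigcup\{C(\tau)\mid\tau\subseteq\sigma\text{ finite}\}$ as $C(\sigma)$ via algebraicity; once that identification is in place there is nothing left to prove.
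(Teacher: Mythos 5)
Your proof is correct and follows essentially the same route as the paper: algebraicity via Lemma~\ref{localg}, the displayed identity from the definition of $k$-polylocality (you go through its reformulation in Lemma~\ref{l34}, which amounts to the same thing) together with the antitone Galois inequality, and the converse by substituting the identity into the union and invoking Theorem~\ref{schmidt} plus Lemma~\ref{l34}. The only difference is that you spell out the bookkeeping the paper leaves implicit.
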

\begin{proof}
	$(1)\Rightarrow (2):$ From Lemma~\ref{localg} it follows that $\bA$ is algebraic. Let $m\in\bN\setminus\{0\}$, $\tau\subseteq A^m$ finite. 
	By the definition of $k$-polylocality, we have
	\[ (\ppTp_\bA^{(k)}(\tau))^\bA\subseteq (\ppTp_\bA(\tau))^\bA. \]
	On the other hand, $\ppTp_\bA^{(k)}(\tau)\subseteq\ppTp_\bA(\tau)$ implies that $(\ppTp_\bA^{(k)}(\tau))^\bA\supseteq (\ppTp_\bA(\tau))^\bA$. Thus, (2) is proved.
	
	$(2)\Rightarrow (1)$ Since $\bA$ is algebraic, we have.
	\[ \sigma\in\overline{[\bA]^{(m)}_\RA}\iff\sigma=\bigcup_{\substack{\tau\subseteq\sigma\\\tau\text{ finite}}}(\ppTp_\bA(\tau))^\bA\]
	By assumption we may replace in each term of the union $\ppTp_\bA(\tau)$ by $\ppTp_\bA^{(k)}(\tau)$. This together with Lemma~\ref{l34} proves the claim. 
\end{proof}

\begin{proposition}\label{wPHPL}
	Let $\bA$ be an algebraic, weakly po\-ly\-mor\-phism-ho\-mo\-ge\-ne\-ous relational structure. Then $\bA$ is polylocal.
\end{proposition}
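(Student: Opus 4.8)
The plan is to use Lemma~\ref{mainlemma}: since $\bA$ is assumed algebraic, it suffices to prove that for every $m\in\bN\setminus\{0\}$ and every finite $\tau\subseteq A^m$ we have $\bigl(\ppTp_\bA(\tau)\bigr)^\bA = \bigl(\ppTp_\bA^{(0)}(\tau)\bigr)^\bA$. The inclusion $\supseteq$ is automatic because $\ppTp_\bA^{(0)}(\tau)\subseteq\ppTp_\bA(\tau)$ and the operator $\Psi\mapsto\Psi^\bA$ is inclusion-reversing. So the whole content is the reverse inclusion: every tuple $\tb\in A^m$ that satisfies all quantifier-free positive primitive formulae (i.e.\ all finite conjunctions of atoms) holding on all of $\tau$ must in fact satisfy every positive primitive formula that holds on $\tau$.

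First I would fix $\tb\in\bigl(\ppTp_\bA^{(0)}(\tau)\bigr)^\bA$ and analyze what this says concretely: writing $\tau=\{\ta_1,\dots,\ta_r\}$ with each $\ta_j\in A^m$, the condition means that the map sending the tuple $(a_{1,i},\dots,a_{r,i})\mapsto b_i$ (for $i=1,\dots,m$, where $\ta_j=(a_{j,1},\dots,a_{j,m})$) is a well-defined partial function and is a local $r$-ary polymorphism of $\bA$ — precisely because respecting every atomic positive primitive formula on $\tau$ is the same as the polymorphism condition displayed in the Preliminaries. Here weak polymorphism-homogeneity enters: with $\bC$ the substructure of $\bA^r$ generated by the columns $\{(a_{1,i},\dots,a_{r,i})\mid 1\le i\le m\}$ together with $\dom f$ (a finite substructure of $\bA^r$), and $\bB\le\bC$ its subset on which $f$ is defined, weak polymorphism-homogeneity extends $f$ to a homomorphism $\hat f:\bC\to\bA$. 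Applying $\hat f$ to the relevant columns recovers that $\tb$ is the image, under a homomorphism defined on a finite substructure of $\bA^r$, of a tuple lying in the diagonal-type position built from $\tau$.

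Next I would use that extension to verify $\bA\models\varphi(\tb)$ for an arbitrary $\varphi\in\ppTp_\bA(\tau)$, say $\varphi(x_1,\dots,x_m)=\exists y_1\cdots\exists y_k\,\psi(x_1,\dots,x_m,y_1,\dots,y_k)$ with $\psi$ a conjunction of atoms. Since $\varphi$ holds on each $\ta_j$, for each $j$ choose witnesses $\tc_j\in A^k$ with $\bA\models\psi(\ta_j,\tc_j)$. These witness-tuples assemble, column by column, into elements of $A^r$ that, together with the columns of $\tau$, lie in $\bC$ after enlarging $\bC$ at the outset to also contain all the finitely many witness columns (this is legitimate: $\bC$ is any finite substructure of $\bA^r$ containing everything we need, and the argument of the previous paragraph applies verbatim to this larger $\bC$). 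Because $\bA\models\psi(\ta_j,\tc_j)$ simultaneously for all $j$, each atom of $\psi$ evaluated on the corresponding columns lands in the relevant relation of $\bA^r$; applying the homomorphism $\hat f$ preserves these atoms, and the images of the witness columns provide witnesses $\td\in A^k$ with $\bA\models\psi(\tb,\td)$, hence $\bA\models\varphi(\tb)$. This proves $\tb\in\bigl(\ppTp_\bA(\tau)\bigr)^\bA$, giving the missing inclusion, and with Lemma~\ref{mainlemma} the proposition follows.

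The main obstacle I expect is bookkeeping rather than a conceptual leap: one must set up the finite substructure $\bC\le\bA^r$ carefully so that it simultaneously contains the domain of the local polymorphism $f$, the $m$ columns determined by $\tau$, and — for each positive primitive formula one tests — a choice of witness columns; and one must check that the well-definedness/partial-polymorphism interpretation of "$\tb$ satisfies all atomic pp-formulae on $\tau$" is exactly the hypothesis needed to invoke weak polymorphism-homogeneity. Care is also needed because a priori infinitely many formulae $\varphi\in\ppTp_\bA(\tau)$ must be checked, but since each is verified independently using only finitely much data, one may treat them one at a time, enlarging $\bC$ as needed for each; alternatively one fixes a single $\varphi$ throughout, which is all Lemma~\ref{mainlemma} via the membership $\tb\in(\ppTp_\bA(\tau))^\bA$ requires.
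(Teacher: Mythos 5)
Your proposal is correct and follows essentially the same route as the paper: reduce to Lemma~\ref{mainlemma}, read the hypothesis $\tb\in\big(\ppTp_\bA^{(0)}(\tau)\big)^\bA$ as saying that the column map $\tc_i\mapsto b_i$ is a local polymorphism, extend it by weak polymorphism-homogeneity to a finite substructure of the power containing the witness columns of the given pp-formula, and push the witnesses through the extension. The only cosmetic difference is that the paper packages the witnesses via the canonical structure $(\bD,\td)$ of the formula and a homomorphism $g:(\bD,\td)\to(\bA^l,\tc)$, whereas you assemble the witness columns directly; the content is identical.
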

\begin{proof}
	We are going to use Lemma~\ref{mainlemma} in order to show that $\bA$ is polylocal. Let $\tau\subseteq A^m$ be finite. In particular, $\tau=\{\ta_1,\dots,\ta_l\}$, for some $l\in\bN$, with $\ta_i=(a_{i,1},\dots,a_{i,m})$.
	
	Let $\tb=(b_1,\dots,b_m)\in \big(\ppTp_\bA^{(0)}(\tau)\big)^\bA$, i.e., $\ppTp_\bA^{(0)}(\tau)\subseteq\ppTp_\bA^{(0)}(\tb)$. 
	Let us show that then also $\ppTp_\bA(\tau)\subseteq\ppTp_\bA(\tb)$. For some $k\in\bN$ let $\psi\in\ppTp_\bA^{(k)}(\tau)$. 
	It is well known that there exists a finite structure $\bD$ and a tuple $\td\in D^m$ such that for all $\ta\in A^m$ we have that $\ta\in\psi^\bA$ if and only if there is a homomorphism $f:(\bD,\td)\to(\bA,\ta)$ --- i.e., $f:\bD\to\bA$ and $f(d_i)=a_i$, for $i\in\{1,\dots,m\}$.  Since $\tau\subseteq\psi^\bA$, it follows that there exists a homomorphism $g:(\bD,\td)\to(\bA^l,\tc)$, where $\tc=(\tc_1,\dots,\tc_m)$ and where $\tc_j=(a_{1,j},\dots, a_{l,j})$ for $1\le j\le m$. 
	
	Let now $\widetilde\bM=g(\bD)$, $\tilde{g}:\bD\epito\widetilde\bM$, such that the following diagram commutes:
	\[
		\begin{psmatrix}
			[name=D] \bD & [name=Al]\bA^l\\
			[name=tM]\widetilde\bM
			\ncline{->}{D}{Al}^{g}
			\ncline{->>}{D}{tM}<{\tilde{g}}
			\ncline{H->}{tM}{Al}_=
		\end{psmatrix}
	\]
	Let $M=\{\tc_1,\dots,\tc_m\}$, and $B=\{b_1,\dots,b_m\}$. Let $\bM$ be the substructure of $\bA^m$ induced by $M$, and let $\bB$ be the substructure of $\bA$ induced by $B$. Since $\ppTp_\bA^{(0)}(\tau)\subseteq\ppTp_\bA^{(0)}(\tb)$, the function $\tilde{f}:\bM\to\bB$ defined by $\tilde{f}: \tc_i\mapsto b_i$ for $1\le i\le m$, is an epimorphism. Let $f$ be the homomorphism that makes the following diagram commutative:
	\[
		\begin{psmatrix}
			[name=M] \bM & [name=A]\bA\\
			[name=B]\bB
			\ncline{->}{M}{A}^{f}
			\ncline{->>}{M}{B}<{\tilde{f}}
			\ncline{H->}{B}{A}_=
		\end{psmatrix}
	\]
	Since $\bA$ is weakly po\-ly\-mor\-phism-ho\-mo\-ge\-ne\-ous, it follows that $f$ has an extension to $\widetilde\bM$ --- call it $h$. Let $\widetilde\bB=h(\widetilde\bM)$, and let $\tilde{h}$ be the homomorphism that makes the following diagram commutative:
	\[
		\begin{psmatrix}
			[name=tM] \widetilde\bM & [name=A]\bA\\
			[name=B]\widetilde\bB
			\ncline{->}{tM}{A}^{h}
			\ncline{->>}{tM}{B}<{\tilde{h}}
			\ncline{H->}{B}{A}_=
		\end{psmatrix}
	\]
	Then, by construction, the following diagram commutes:
	\[
		\begin{psmatrix}
			[name=tM] (\widetilde\bM,\tc) & [name=A](\bA,\tb)\\
			[name=B](\widetilde\bB,\tb)
			\ncline{->}{tM}{A}^{h}
			\ncline{->>}{tM}{B}<{\tilde{h}}
			\ncline{H->}{B}{A}_=
		\end{psmatrix}
	\]
	Summing up our findings, we obtain that also the following diagram commutes:
	\[
	\begin{psmatrix}
		[name=D](\bD,\td) & [name=Al](\bA^l,\tc)\\
		[name=tM](\widetilde\bM,\tc) \\
		[name=B] (\widetilde\bB,\tb) & [name=A](\bA,\tb)
		\ncline{->}{D}{Al}^{g}
		\ncline{->>}{D}{tM}<{\tilde{g}}
		\ncline{->>}{tM}{B}<{\tilde{h}}
		\ncline{->}{tM}{A}^{h}
		\ncline{H->}{B}{A}^{=}
		\ncline{H->}{tM}{Al}^{=}
	\end{psmatrix}
	\]
	In particular, $h\circ\tilde{g}:(\bD,\td)\to(\bA,\tb)$. Hence, $\tb\in\psi^\bA$. Consequently, $\psi\in\ppTp_\bA(\tb)$, whence also $\ppTp_\bA(\tau)\subseteq\ppTp_\bA(\tb)$, i.e.\ $\tb\in\big(\ppTp_\bA(\tau)\big)^\bA$. It follows that $\big(\ppTp_\bA^{(0)}(\tau)\big)^\bA\subseteq\big(\ppTp_\bA(\tau)\big)^\bA$. Moreover, from $\ppTp_\bA^{(0)}(\tau)\subseteq\ppTp_\bA(\tau)$ it follows that $\big(\ppTp_\bA(\tau)\big)^\bA\subseteq\big(\ppTp_\bA^{(0)}(\tau)\big)^\bA$. Now from Lemma~\ref{mainlemma} it follows that $\bA$ is polylocal. 
\end{proof}

\begin{proposition}\label{watwPH}
	Let $\bA$ be a pp-atomic relational structure that is polylocal or has the property that every complete positive primitive type $\Psi$ over $\bA$ is generated by $\Psi^{(0)}$. Then $\bA$ is weakly po\-ly\-mor\-phism-ho\-mo\-ge\-ne\-ous.
\end{proposition}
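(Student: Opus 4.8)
The plan is to verify the definition of weak polymorphism-homogeneity directly. Fix $n\in\bN\setminus\{0\}$, a finite substructure $\bC\le\bA^n$ with universe $C=\{c_1,\dots,c_p\}$, a substructure $\bB\le\bC$ with universe $B=\{c_1,\dots,c_q\}$ (so $q\le p$), and a homomorphism $h\colon\bB\to\bA$; the goal is to extend $h$ to a homomorphism $\bC\to\bA$. For $j\in\{1,\dots,n\}$ write $\td^{(j)}=(c_1^{\,j},\dots,c_p^{\,j})\in A^p$ for the $j$-th column of $\bC$ (with $c_i^{\,j}$ the $j$-th coordinate of $c_i$), and let $\ta^{(j)}$ be its truncation to the first $q$ entries. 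Put $\tau'=\{\td^{(1)},\dots,\td^{(n)}\}\subseteq A^p$, $\tau=\{\ta^{(1)},\dots,\ta^{(n)}\}\subseteq A^q$, $\tb=(h(c_1),\dots,h(c_q))\in A^q$, and let $\sigma\subseteq A^p$ be the relation consisting of all tuples $(e_1,\dots,e_p)$ for which $c_i\mapsto e_i$ is a homomorphism $\bC\to\bA$, equivalently $\sigma=\bigcap\{\delta^\bA\mid \delta\text{ an atomic formula with }\bC\models\delta(c_1,\dots,c_p)\}$. Since every projection $e_j\restr_\bC\colon\bC\to\bA$ is a homomorphism, $\tau'\subseteq\sigma$, and $\ta^{(j)}$ is the $q$-truncation of $\td^{(j)}\in\sigma$.

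The first step is to capture $\sigma$ by a single positive primitive formula. Using $\bC\le\bA^n$ one checks that every atomic formula holding of $(c_1,\dots,c_p)$ in $\bC$ lies in $\ppTp_\bA^{(0)}(\tau')$, while conversely every formula in $\ppTp_\bA^{(0)}(\tau')$ is equivalent to a conjunction of such atomic formulae; hence $\sigma=\big(\ppTp_\bA^{(0)}(\tau')\big)^\bA$. Now pp-atomicity enters: the complete type $\ppTp_\bA(\tau')$ is principal, say generated by $\chi\in\ppTp_\bA(\tau')$. Then $\tau'\subseteq\chi^\bA$, and since each formula of $\ppTp_\bA^{(0)}(\tau')\subseteq\ppTp_\bA(\tau')$ is a consequence of $\chi$, also $\chi^\bA\subseteq\sigma$. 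Put $\psi(x_1,\dots,x_q):=\exists x_{q+1}\cdots\exists x_p\,\chi(x_1,\dots,x_p)$. From $\tau'\subseteq\chi^\bA$ we get $\tau\subseteq\psi^\bA$, so $\psi\in\ppTp_\bA(\tau)$; from $\chi^\bA\subseteq\sigma$ we get that every tuple of $\psi^\bA$ is the $q$-truncation of a tuple of $\sigma$. Consequently it suffices to prove $\tb\in\psi^\bA$, for then there are $e_{q+1},\dots,e_p\in A$ with $(h(c_1),\dots,h(c_q),e_{q+1},\dots,e_p)\in\sigma$, i.e.\ a homomorphism $\bC\to\bA$ extending $h$.

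To show $\tb\in\psi^\bA$ I would combine two facts. First, as $h$ is a homomorphism of $\bB\le\bA^n$: any $\varphi\in\ppTp_\bA^{(0)}(\tau)$ is a conjunction of atoms, and for each of its relational atoms $\varrho(x_{i_1},\dots,x_{i_r})$ (all indices $\le q$), holding at every $\ta^{(j)}$ forces $(c_{i_1},\dots,c_{i_r})\in\varrho_{\bA^n}$, hence $\in\varrho_\bB$, hence $(h(c_{i_1}),\dots,h(c_{i_r}))\in\varrho_\bA$; thus $\tb\in\varphi^\bA$, giving $\tb\in\big(\ppTp_\bA^{(0)}(\tau)\big)^\bA$. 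Second, one must pass from the quantifier-free part of the type of $\tau$ to $\psi$, i.e.\ prove $\big(\ppTp_\bA^{(0)}(\tau)\big)^\bA\subseteq\psi^\bA$: if $\bA$ is polylocal, Lemma~\ref{mainlemma} with $k=0$ gives $\big(\ppTp_\bA^{(0)}(\tau)\big)^\bA=\big(\ppTp_\bA(\tau)\big)^\bA\subseteq\psi^\bA$ since $\psi\in\ppTp_\bA(\tau)$; if instead every complete positive primitive type $\Psi$ over $\bA$ is generated by $\Psi^{(0)}$, then $\ppTp_\bA(\tau)=\ppTp_\bA\big(\big(\ppTp_\bA^{(0)}(\tau)\big)^\bA\big)$, so $\psi\in\ppTp_\bA(\tau)$ again yields $\big(\ppTp_\bA^{(0)}(\tau)\big)^\bA\subseteq\psi^\bA$. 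Together these give $\tb\in\big(\ppTp_\bA^{(0)}(\tau)\big)^\bA\subseteq\psi^\bA$.

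The step I expect to be the main obstacle is the reduction of $\sigma$ to a single positive primitive formula: when the signature $L$ is infinite, $\sigma$ may be cut out by infinitely many atomic constraints, so the naive canonical conjunctive query of $\bC$ is no longer a finitary positive primitive formula, and it is exactly pp-atomicity that lets us replace it by one formula $\chi$ with $\tau'\subseteq\chi^\bA\subseteq\sigma$ (for finite $L$ this step is trivial and needs no hypothesis). The remaining work is careful bookkeeping of the ``transpose'' correspondence between relational conditions on the elements of $\bB,\bC\le\bA^n$ and the quantifier-free positive primitive type of the columns $\tau,\tau'$, plus the one genuinely new input that the quantifier-free part of the type of $\tau$ already forces membership in the existential formula $\psi$ — which is precisely what polylocality (respectively, the $\Psi^{(0)}$-generation hypothesis) delivers.
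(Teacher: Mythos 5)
Your proof is correct and follows essentially the same route as the paper's: encode $\bB$ and $\bC$ by their column relations $\tau\subseteq A^q$, $\tau'\subseteq A^p$, use pp-atomicity to replace the (possibly infinite) atomic diagram of $\bC$ by a single generating formula $\chi$ for $\ppTp_\bA(\tau')$, existentially quantify the extra variables, and transfer the resulting formula from $\tau$ to the image tuple $\tb$ via polylocality (Lemma~\ref{mainlemma} with $k=0$) or the $\Psi^{(0)}$-generation hypothesis, the witnesses giving the images of the new elements. Your explicit use of $\chi^\bA\subseteq\sigma$ to justify that the resulting assignment is a homomorphism is only a small presentational variant of the paper's final step.
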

\begin{proof}
	Let $\bB\le\bA^k$ be finite, and let $f:\bB\to\bA$. Let us enumerate the elements of $B$ like $B=\{\tb_1,\ldots, \tb_m\}$. Assume $\tb_i=(b_{i,1},\dots,b_{i,k})$, for $1\le i\le m$. Define $\tau:=\{(b_{1,j},\dots b_{m,j})\mid 1\le j \le k\}$, and $\tc:=(c_1,c_2,\ldots,c_m)$, where $c_i=f(\tb_i)$. Then $\ppTp^{(0)}_{\bA}(\tau)\subseteq \ppTp^{(0)}_{\bA}(\tc)$. By our assumptions, it follows that $\ppTp_{\bA}(\tau)\subseteq \ppTp_{\bA}(\tc)$. 

	Suppose that $\widehat\bB$ is a finite superstructure of $\bB$ in $\bA$. Then we can enumerate the elements of $\widehat{B}$ like $\widehat{B}=\{\tb_1,\ldots,\tb_m,\tb_{m+1},\ldots,\tb_{m+n}\}$. Let $\sigma:=\{(b_{1,j},\dots b_{m+n,j})\mid 1\le j \le k\}$, and let $\Psi:=\ppTp_{\bA}(\sigma)$. Since $\bA$ is pp-atomic, there exists a positive primitive formula $\varphi$ such that $\ppTp_\bA(\varphi^\bA)=\Psi$. Clearly, we have that $(\exists x_{m+1}\ldots\exists x_{m+n} \varphi)\in\ppTp_\bA(\tau)$. Thus also $(\exists x_{m+1}\ldots\exists x_{m+n} \varphi)\in\ppTp_\bA(\tc)$. That means that there exist $c_{m+1},\ldots, c_{m+n}$ in $A$, such that
	\[
		\varphi\in\ppTp_\bA (c_1,\ldots,c_m,c_{m+1},\ldots, c_{m+n}).
	\]
	But from this it follows that $\ppTp_{\bA}\big((c_1,\dots,c_{m+n})\big)\supseteq\Psi=  \ppTp_{\bA}(\sigma)$. Consequently, the mapping $\tb_i\mapsto c_i$ ($1\le i\le m+n$) defines a homomorphism that extends $f$ to $\widehat{\bB}$.

	This shows that $\bA$ is weakly po\-ly\-mor\-phism-ho\-mo\-ge\-ne\-ous.
\end{proof}

\begin{proposition}\label{gentypesPL}
  Let $\mathbf{A}=(A,({\varrho}_{\mathbf{A}})_{\varrho\in R})$ be an algebraic 
  relational structure. If every complete positive primitive  type $\Psi$ over $\bA$ is generated by $\Psi^{(k)}$, then $\mathbf{A}$ is $k$-polylocal.
\end{proposition}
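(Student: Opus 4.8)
The plan is to reduce the claim to Lemma~\ref{mainlemma} and then to feed in the hypothesis by a one-line computation inside the polarity $(\ppTp_\bA,{}^\bA)$ between $m$-ary relations and $m$-ary positive primitive types over $\bA$.

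First I would invoke Lemma~\ref{mainlemma}. Since $\bA$ is assumed to be algebraic, in order to conclude that $\bA$ is $k$-polylocal it suffices, by the implication $(2)\Rightarrow(1)$ of that lemma, to prove that for every $m\in\bN\setminus\{0\}$ and every finite $\tau\subseteq A^m$ one has
\[ \big(\ppTp_\bA(\tau)\big)^\bA=\big(\ppTp_\bA^{(k)}(\tau)\big)^\bA. \]
So I would fix such an $m$ and a finite $\tau\subseteq A^m$ and set $\Psi:=\ppTp_\bA(\tau)$. By definition $\Psi$ is a complete $m$-ary positive primitive type over $\bA$, and $\Psi^{(k)}=\ppTp_\bA^{(k)}(\tau)$.

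Next I would unwind the hypothesis for this $\Psi$. By assumption $\Psi$ is generated by $\Psi^{(k)}$; since $\Psi^{(k)}\subseteq\Psi$ and $\Psi$ is Galois-closed, this says precisely that the closure of $\Psi^{(k)}$ as a positive primitive type equals $\Psi$, i.e.\ $\ppTp_\bA\big((\Psi^{(k)})^\bA\big)=\Psi$. Applying the relational operator $\sigma\mapsto\sigma^\bA$ to both sides and using the identity $\big(\ppTp_\bA(\Phi^\bA)\big)^\bA=\Phi^\bA$, valid for every type $\Phi$ (this is the $\beta\alpha\beta=\beta$ law of a Galois connection), applied with $\Phi=\Psi^{(k)}$, I obtain
\[ \big(\ppTp_\bA^{(k)}(\tau)\big)^\bA=(\Psi^{(k)})^\bA=\Psi^\bA=\big(\ppTp_\bA(\tau)\big)^\bA, \]
which is exactly the identity required above. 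As $m$ and $\tau$ were arbitrary, Lemma~\ref{mainlemma} then gives that $\bA$ is $k$-polylocal.

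I do not expect a genuine obstacle here; the proof is essentially a bookkeeping reduction. The two points that need a little care are: recalling that the complete $m$-ary positive primitive types over $\bA$ are exactly those of the form $\ppTp_\bA(\tau)$ with $\tau\subseteq A^m$ finite --- so that the hypothesis, stated for all complete types, delivers the needed equality for all finite $\tau$ of every arity --- and reading ``$\Psi$ is generated by $\Psi^{(k)}$'' correctly as $\ppTp_\bA\big((\Psi^{(k)})^\bA\big)=\Psi$ and then applying the closure law of the polarity in the right direction. (Alternatively one could argue straight from the definition of $k$-polylocality via Lemma~\ref{l34}, but going through Lemma~\ref{mainlemma} minimizes the computation.)
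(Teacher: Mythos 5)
Your proposal is correct and follows the paper's own argument: the paper likewise observes that $\ppTp_\bA(\tau)$ is a complete type, applies the hypothesis to conclude $\big(\ppTp_\bA(\tau)\big)^\bA=\big(\ppTp_\bA^{(k)}(\tau)\big)^\bA$, and then invokes Lemma~\ref{mainlemma}. Your only addition is to spell out the Galois-connection computation behind that equality, which the paper leaves implicit.
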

\begin{proof}
	Let $\tau\subseteq A^m$ be finite. Then $\ppTp_\bA(\tau)$ is a complete positive primitive type over $\bA$.  Hence, by the assumption we have that $\ppTp_\bA(\tau)$ is generated by $\ppTp_\bA^{(k)}(\tau)$. In particular, we conclude that $\big(\ppTp_\bA(\tau)\big)^\bA=\big(\ppTp_\bA^{(k)}(\tau)\big)^\bA$. Hence, by Lemma~\ref{mainlemma} we have that $\bA$ is $k$-polylocal.  
\end{proof}

\subsection*{Positive primitive elimination sets}
Let $\bA$ be an $L$-structure, and let $\Phi$ be a set of positive primitive formulae from $L_{\omega\omega}$. We call $\Phi$ a \emph{positive primitive elimination set for $\bA$} if for every positive primitive formula $\varphi(x_1,\dots,x_m)\in L_{\omega\omega}$ there exists a finite set $\Psi\subseteq\Phi$ such that all formulae from $\Psi$ have their free variables in the set $\{x_1,\dots,x_m\}$ and such that $\varphi^\bA =\Psi^\bA$. 

Specifically, we say that $\bA$ has \emph{quantifier elimination for positive primitive formulae} if the set of quantifier free positive primitive formulae from $L_{\omega\omega}$ is a positive primitive elimination set for $\bA$.
\begin{proposition}\label{localQE}
	Let $\bA$ be a weakly oligomorphic $k$-polylocal relational structure. Then the set of all positive primitive formulae of quantifier depth at most $k$ from $L_{\omega\omega}$ is a positive primitive elimination set for $\bA$. 
\end{proposition}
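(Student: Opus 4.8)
The plan is to fix an arbitrary positive primitive formula $\varphi(x_1,\dots,x_m)$ and to produce a \emph{finite} set $\Psi$ of positive primitive formulae of quantifier depth at most $k$, with free variables among $x_1,\dots,x_m$, such that $\Psi^\bA=\varphi^\bA$. Write $\sigma:=\varphi^\bA$; since $\varphi$ is positive primitive, $\sigma\in[\bA]^{(m)}_\RA\subseteq\overline{[\bA]^{(m)}_\RA}$, so $\sigma$ is Galois-closed and $(\ppTp_\bA(\sigma))^\bA=\sigma$. The argument will run through two independent finiteness reductions: weak oligomorphy first lets one replace $\sigma$ by a finite sub-relation $\tau_0$ having the same positive primitive type, and $k$-polylocality then lets one replace the full type of $\tau_0$ by its depth-$\le k$ part.

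First I would carry out the reduction to a finite relation. Note that $\ppTp_\bA(\sigma)=\bigcap\{\ppTp_\bA(\tau)\mid\tau\subseteq\sigma\text{ finite}\}$, that this family is downward directed under $\subseteq$ (given $\tau_1,\tau_2$, the member $\ppTp_\bA(\tau_1\cup\tau_2)$ lies below both), and that each of its members is a \emph{closed} type, by the general Galois identity $\ppTp_\bA\bigl((\ppTp_\bA(\tau))^\bA\bigr)=\ppTp_\bA(\tau)$. Since $\bA$ is weakly oligomorphic it has principal positive primitive types (Lemma~\ref{lem:add_imp}), so by Lemma~\ref{princint} the Galois-closed $m$-ary relations coincide with the positive primitively definable ones, of which weak oligomorphy guarantees there are only finitely many; hence (via the Galois connection) there are only finitely many closed $m$-ary positive primitive types. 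A downward directed subset of a finite poset has a least element, so $\ppTp_\bA(\sigma)=\ppTp_\bA(\tau_0)$ for some finite $\tau_0\subseteq\sigma$, whence $\sigma=(\ppTp_\bA(\sigma))^\bA=(\ppTp_\bA(\tau_0))^\bA$.

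Next I would descend to quantifier depth $k$. By Lemma~\ref{localg}, $\bA$ is algebraic, so Lemma~\ref{mainlemma} gives $(\ppTp_\bA(\tau_0))^\bA=(\ppTp_\bA^{(k)}(\tau_0))^\bA$; combined with the previous step this reads $\sigma=(\ppTp_\bA^{(k)}(\tau_0))^\bA=\bigcap_{\psi\in\ppTp_\bA^{(k)}(\tau_0)}\psi^\bA$. Each $\psi\in\ppTp_\bA^{(k)}(\tau_0)$ is (logically equivalent to) a formula $\exists x_{m+1}\dots\exists x_{m+k}\,\chi$ with $\chi$ a conjunction of atoms, hence a positive primitive formula of quantifier depth at most $k$ whose free variables lie among $x_1,\dots,x_m$. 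That set may be infinite, but by weak oligomorphy $\{\psi^\bA\mid\psi\in\ppTp_\bA^{(k)}(\tau_0)\}$ is a finite collection of $m$-ary relations, so some finite $\Psi\subseteq\ppTp_\bA^{(k)}(\tau_0)$ already satisfies $\Psi^\bA=(\ppTp_\bA^{(k)}(\tau_0))^\bA=\sigma=\varphi^\bA$ (in the degenerate case $\ppTp_\bA^{(k)}(\tau_0)=\emptyset$ one has $\sigma=A^m$ and $\Psi=\emptyset$ works). As $\varphi$ was arbitrary, this establishes the proposition.

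The routine parts are the two observations that a downward, respectively upward, directed subset of a finite poset has an extremum, plus the arity- and quantifier-depth bookkeeping. The one conceptual point — and the only place where both hypotheses are used jointly — is the chain $\sigma=(\ppTp_\bA(\tau_0))^\bA=(\ppTp_\bA^{(k)}(\tau_0))^\bA$: I must first spend weak oligomorphy to arrive at a \emph{finite} $\tau_0$, because Lemma~\ref{mainlemma} sanctions the swap $\ppTp_\bA\leftrightarrow\ppTp_\bA^{(k)}$ only on finite relations, and only then spend $k$-polylocality to perform that swap. I do not anticipate any obstacle beyond executing these two reductions in the right order.
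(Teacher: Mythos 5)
Your proposal is correct and follows essentially the same route as the paper's proof: weak oligomorphy to replace $\varphi^\bA$ by $(\ppTp_\bA(\tau))^\bA$ for a finite $\tau$, then Lemma~\ref{mainlemma} (via $k$-polylocality) to pass to $(\ppTp_\bA^{(k)}(\tau))^\bA$, then weak oligomorphy again to extract a finite defining family of depth-$\le k$ formulae. The only differences are cosmetic: you phrase the first reduction via a downward directed family of closed types in a finite poset where the paper argues by an increasing chain of definable relations, and you settle for a finite set $\Psi$ where the paper produces a single formula, which is all the definition of an elimination set requires.
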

\begin{proof}
	Let $m\in\bN\setminus\{0\}$, and let $\varphi(x_1,\dots,x_m)\in L_{\omega\omega}$ be positive primitive. Take $\varrho:=\varphi^\bA$. Since $\bA$ is weakly oligomorphic, there exists a finite subset $\tau$ of $\varrho$ such that $(\ppTp_\bA(\tau))^\bA=\varrho$. Otherwise we would obtain an infinite properly increasing chain of positive primitively definable relations over $\bA$ whose union is equal to $\varrho$ --- contradiction. By Lemma~\ref{mainlemma} we have $(\ppTp_\bA(\tau))^\bA=(\ppTp_\bA^{(k)}(\tau))^\bA$. Again using that $\bA$ is weakly oligomorphic, there exists a  $\psi\in\ppTp_\bA^{(k)}(\tau)$ such that $\psi^\bA=(\ppTp_\bA^{(k)}(\tau))^\bA=\varrho=\varphi^\bA$. 
\end{proof}   

\begin{proposition}\label{ppesee}
	Let $\bA$ be a relational structure that has a positive primitive elimination set of formulae of quantifier depth at most $k$. Then every closed positive primitive type $\Psi$ of $\bA$ is generated by $\Psi^{(k)}$. 
\end{proposition}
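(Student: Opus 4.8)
The plan is to work directly from the definitions and use the positive primitive elimination set to replace an arbitrary formula of a closed type by one of bounded quantifier depth, all while staying inside the type. So let $\Phi$ be a positive primitive elimination set for $\bA$ consisting of formulae of quantifier depth at most $k$, let $\Psi$ be a closed positive primitive type over $\bA$, say with free variables among $\{x_1,\dots,x_m\}$, and write $\sigma:=\Psi^\bA$. I must show that $\Psi$ is generated by $\Psi^{(k)}$, i.e.\ that $\bigl(\Psi^{(k)}\bigr)^\bA=\sigma$; since $\Psi^{(k)}\subseteq\Psi$ gives $\bigl(\Psi^{(k)}\bigr)^\bA\supseteq\sigma$ for free, the content is the reverse inclusion, equivalently that every $\varphi\in\Psi$ is implied (over $\bA$) by finitely many members of $\Psi^{(k)}$.

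First I would fix an arbitrary $\varphi(x_1,\dots,x_m)\in\Psi$, so that $\sigma\subseteq\varphi^\bA$. Applying the elimination property of $\Phi$ to $\varphi$, there is a finite subset $\{\psi_1,\dots,\psi_r\}\subseteq\Phi$, each $\psi_j$ with free variables among $\{x_1,\dots,x_m\}$ and of quantifier depth at most $k$, such that $\varphi^\bA=\psi_1^\bA\cap\dots\cap\psi_r^\bA$. In particular $\sigma\subseteq\psi_j^\bA$ for each $j$, which by the definition of $\ppTp_\bA$ means precisely that each $\psi_j\in\ppTp_\bA(\sigma)$; and since $\Psi$ is closed, $\ppTp_\bA(\sigma)=\ppTp_\bA(\Psi^\bA)=\Psi$. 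Hence $\psi_j\in\Psi$ for every $j$. A positive primitive formula of quantifier depth at most $k$ is logically equivalent to one of the shape $\exists x_{m+1}\dots\exists x_{m+k}\,\chi$ with $\chi$ a conjunction of atoms (pad with dummy existential quantifiers if its depth is strictly less than $k$), so each $\psi_j$ lies in $\Psi^{(k)}$. Thus $\varphi^\bA=\bigcap_{j=1}^r\psi_j^\bA$ is an intersection of finitely many relations defined by members of $\Psi^{(k)}$, which exhibits $\varphi$ as a consequence over $\bA$ of the finite subset $\{\psi_1,\dots,\psi_r\}$ of $\Psi^{(k)}$.

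Finally I would assemble this into the statement ``$\Psi$ is generated by $\Psi^{(k)}$.'' Taking the intersection over all $\varphi\in\Psi$ of the relations $\varphi^\bA$ gives $\Psi^\bA=\sigma$ on the one hand, and on the other hand each $\varphi^\bA$ contains $\bigl(\Psi^{(k)}\bigr)^\bA$ by the previous paragraph, so $\sigma=\Psi^\bA\supseteq\bigl(\Psi^{(k)}\bigr)^\bA$; combined with the trivial inclusion $\bigl(\Psi^{(k)}\bigr)^\bA\supseteq\Psi^\bA=\sigma$ this yields $\bigl(\Psi^{(k)}\bigr)^\bA=\sigma=\Psi^\bA$, which (since $\Psi$ is closed, and $\Psi^{(k)}\subseteq\Psi$) is exactly the assertion that $\Psi^{(k)}$ generates $\Psi$.

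The only subtle point — and the one I would be most careful about — is the bookkeeping around quantifier depth versus the index $k$ in the notation $\Psi^{(k)}$: the definition of $\Psi^{(k)}$ in the excerpt asks for formulae equivalent to an existential block of exactly $k$ quantifiers over a conjunction of atoms, whereas the elimination set only promises depth \emph{at most} $k$. The remedy is routine (introducing spare existentially quantified variables that do not occur in the matrix never changes the defined relation), but it is the one place where a careless reading would leave a gap, so I would state that normalization explicitly rather than treat ``depth $\le k$'' and ``membership in $\Psi^{(k)}$'' as interchangeable without comment.
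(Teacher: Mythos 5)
Your proof is correct and follows essentially the same route as the paper: apply the elimination set to each $\varphi\in\Psi$, observe via closedness of $\Psi$ that the resulting bounded-depth formulae already belong to $\Psi$ (hence to $\Psi^{(k)}$), and conclude $\bigl(\Psi^{(k)}\bigr)^\bA=\Psi^\bA$, which is equivalent to the paper's formulation $\ppTp_\bA\bigl((\Psi^{(k)})^\bA\bigr)=\Psi$. Your explicit remark about padding ``depth at most $k$'' formulae with dummy existential quantifiers to match the exact shape required in the definition of $\Psi^{(k)}$ is a point the paper passes over silently, and it is handled correctly.
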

\begin{proof}
Let $\Psi$ be an $m$-ary closed positive primitive type over $\bA$.  For every $\psi\in\Psi$, let $\Psi_\psi$ be a finite set of positive primitive formulae of quantifier depth $\le k$ with free variables in $\{x_1,\dots,x_m\}$ such that $\psi^\bA=(\Psi_\psi)^\bA$.  Define $\Psi^*:=\bigcup_{\psi\in\Psi} \Psi_\psi$. Then, by construction, $\ppTp_\bA(\Psi^\bA)=\ppTp_\bA((\Psi^*)^\bA)$. Moreover, since $\Psi^*\subseteq\Psi^{(k)}$, we have $\ppTp_\bA((\Psi^*)^\bA)\subseteq \ppTp_\bA((\Psi^{(k)})^\bA)\subseteq\ppTp_\bA(\Psi^\bA)=\Psi$. Hence $\ppTp_\bA\big((\Psi^{(k)})^\bA\big)=\Psi$.
\end{proof}

\begin{corollary}\label{corquant}
	A weakly oligomorphic relational structure has quantifier elimination for positive primitive formulae if and only if it is weakly po\-ly\-mor\-phism-ho\-mo\-ge\-ne\-ous. 
\end{corollary}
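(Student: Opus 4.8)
The plan is to assemble the statement from the preceding propositions, all instantiated at the parameter $k=0$, together with the standing hypothesis of weak oligomorphy. First I would record the two consequences of weak oligomorphy that are used throughout: by Lemma~\ref{lem:add_imp}, a weakly oligomorphic structure $\bA$ is both algebraic and pp-atomic. I would also make explicit the harmless bookkeeping identification that ``quantifier elimination for positive primitive formulae'' means precisely that the quantifier-free positive primitive formulae --- equivalently, the positive primitive formulae of quantifier depth at most $0$, equivalently the members of $\Psi^{(0)}$ for the relevant type $\Psi$ --- form a positive primitive elimination set. Getting these three phrasings of ``quantifier depth $0$'' to line up is the only thing one has to be mildly careful with.

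For the implication ``weakly polymorphism-homogeneous $\Rightarrow$ quantifier elimination'' I would argue as follows. Since $\bA$ is algebraic and weakly polymorphism-homogeneous, Proposition~\ref{wPHPL} gives that $\bA$ is polylocal, i.e.\ $0$-polylocal. Then $\bA$ is weakly oligomorphic and $0$-polylocal, so Proposition~\ref{localQE} applied with $k=0$ says that the positive primitive formulae of quantifier depth at most $0$ form a positive primitive elimination set for $\bA$, which is exactly quantifier elimination for positive primitive formulae.

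For the converse I would start from a positive primitive elimination set consisting of formulae of quantifier depth at most $0$ and apply Proposition~\ref{ppesee} with $k=0$: every closed positive primitive type $\Psi$ over $\bA$ is generated by $\Psi^{(0)}$. In particular this holds for every complete positive primitive type $\ppTp_\bA(\tau)$ with $\tau\subseteq A^m$ finite, since such a type is Galois-closed. As $\bA$ is algebraic, Proposition~\ref{gentypesPL} with $k=0$ then yields that $\bA$ is $0$-polylocal, i.e.\ polylocal. Finally, $\bA$ is pp-atomic and polylocal, so Proposition~\ref{watwPH} gives that $\bA$ is weakly polymorphism-homogeneous, closing the cycle.

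I do not expect a genuine obstacle here: the content has been front-loaded into Propositions~\ref{wPHPL}, \ref{watwPH}, \ref{gentypesPL}, \ref{localQE} and \ref{ppesee}, and the corollary is the ``$k=0$ corner'' of that machinery. The only points that need attention are the identification of quantifier-free positive primitive formulae with $\Psi^{(0)}$-membership and with quantifier depth $\le 0$, and the observation that complete positive primitive types are closed, so that Proposition~\ref{ppesee} (stated for closed types) indeed feeds into Proposition~\ref{gentypesPL} (which is stated for complete types).
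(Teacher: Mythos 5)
Your proposal is correct and takes essentially the same route as the paper: Proposition~\ref{wPHPL} followed by Proposition~\ref{localQE} (with $k=0$) for the direction from weak polymorphism-homogeneity to quantifier elimination, and Proposition~\ref{ppesee} feeding into Proposition~\ref{watwPH} for the converse, with weak oligomorphy supplying algebraicity and pp-atomicity via Lemma~\ref{lem:add_imp}. The only difference is a harmless detour in the converse: the paper plugs ``every complete positive primitive type is generated by $\Psi^{(0)}$'' directly into the second alternative hypothesis of Proposition~\ref{watwPH}, whereas you first pass through Proposition~\ref{gentypesPL} to get polylocality and then use the first alternative --- both are valid, yours just adds one redundant step.
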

\begin{proof}
	Let $\bA$ be a weakly oligomorphic relational structure. Then, in particular, $\bA$ is algebraic and pp-atomic.
	
	``$\Rightarrow$'' By Proposition~\ref{ppesee},  every complete positive primitive type $\Psi$ over $\bA$ is generated by $\Psi^{(0)}$. From Proposition~\ref{watwPH} it follows that $\bA$ is weakly po\-ly\-mor\-phism-ho\-mo\-ge\-ne\-ous.
	
	``$\Leftarrow$'' From Proposition~\ref{wPHPL} it follows that $\bA$ is polylocal. Now, by Proposition~\ref{localQE} we have that $\bA$ has quantifier elimination for positive primitive formulae.
\end{proof}

\begin{remark}
	A similar result for $\omega$-categorical structures was obtained By B.~Romov in \cite[Thm.3.4]{Rom11}. According to Romov, the proof of \cite[Thm.3.4]{Rom11}  generalizes to a proof of Corollary~\ref{corquant}, restricted to countable structures. 
\end{remark}

\subsection*{Collected findings}
The results of this section can be visualized by two diagrams:
\begin{center}\psscalebox{0.7 0.7}{
   	\begin{pspicture}(0,-1)(14,4)
		\rput[t](7,3.5){\rnode{EL}{\fbox{\parbox{5.25cm}{\centerline{$\mathbf{A}$ is $k$-polylocal}}}}}
		\rput[tl](0,0.5){\rnode{PEES}{\fbox{\parbox{5.5cm}{\RaggedRight $\bA$ has a positive primitive elimination set of formulae of quantifier depth at most $k$}}}}
		\rput[tr](14,0.5){\rnode{TYPES}{\fbox{\parbox{5.25cm}{\RaggedRight every complete positive primitive type $\Psi$ over $\bA$ is generated by $\Psi^{(k)}$}}}}
		{\psset{arrowscale=2,nodesep=4pt}
		\ncline[offset=4pt]{<-}{EL}{TYPES}\naput[nrot=:U]{\scriptsize $\mathbf{A}$ is algebraic}}
		\ncline{<-}{PEES}{EL}\naput[nrot=:U]{\scriptsize\shortstack{$\mathbf{A}$ is weakly\\ oligomorphic}}
		\ncline[arrowscale=2]{->}{PEES}{TYPES}
		\end{pspicture}}
	\end{center}

\begin{center}\psscalebox{0.7 0.7}{
   	\begin{pspicture}(0,-1)(14,13)
		\rput[t](7,12){\rnode{HH}{\fbox{\parbox{5.25cm}{\RaggedRight $\mathbf{A}$ is polymorphism-homogeneous}}}}
		\rput[t](7,8){\rnode{OPE}{\fbox{\parbox{5.25cm}{\RaggedRight $\mathbf{A}$ is weakly polymorphism-homogeneous}}}}
		\rput[t](7,3.5){\rnode{EL}{\fbox{\parbox{5.25cm}{\centerline{$\mathbf{A}$ is polylocal}}}}}
		\rput[tl](0,0.5){\rnode{PEES}{\fbox{\parbox{5.5cm}{\RaggedRight $\bA$ has quantifier elimination for positive primitive formulae\\~}}}}
		\rput[tr](14,0.5){\rnode{TYPES}{\fbox{\parbox{5.25cm}{\RaggedRight every complete positive primitive type $\Psi$ over $\bA$ is generated by $\Psi^{(0)}$}}}}
		{\psset{arrowscale=2,nodesep=4pt}
		\ncline[offset=4pt]{<-}{EL}{TYPES}\naput[nrot=:U]{\scriptsize $\mathbf{A}$ is algebraic}}
		{\psset{arrowscale=2,nodesep=4pt}
		\ncline{<-}{PEES}{EL}\naput[nrot=:U]{\scriptsize\shortstack{$\mathbf{A}$ is weakly\\ oligomorphic}}
		\ncline[offset=4pt]{->}{EL}{OPE}\naput[nrot=:U]{\scriptsize $\mathbf{A}$ is pp-atomic}
		\ncline[offset=-4pt]{<-}{EL}{OPE}\nbput[nrot=:U]{\scriptsize $\mathbf{A}$ is algebraic}
		\ncline[offset=4pt]{->}{OPE}{HH}\naput[nrot=:U]{\scriptsize $A$ is countable} \ncline[offset=-4pt]{<-}{OPE}{HH}}
		\ncarc[arrowscale=2,arcangleA=-45,arcangleB=-45]{->}{TYPES}{OPE}\nbput[nrot=:U]{\scriptsize $\mathbf{A}$ is pp-atomic}
			\ncline[arrowscale=2]{->}{PEES}{TYPES}
		\end{pspicture}}
	\end{center}

\section{Characterizing positive primitively definable relations}\label{lcRC}

In the following, with $R_A^{(m)}$ we will denote the set of all $m$-ary relations on $A$. Moreover, we define
\[R_A:=\bigcup_{m\in\bN\setminus\{0\}} R_A^{(m)}.\]
A relational algebra $W$ on a set $A$ is a subset of $R_A$ with the property that 
\[ W=[\bA]_\RA\]
for some relational structure $\bA$ on $A$. Note that a set of relations on $A$ is a relational algebra if and only if it is closed with respect to positive primitive definitions, hence the name relational algebra. 

Given any set $W$ of relations on $A$, we can construct a \emph{canonical relational structure} $\bC_W$:  We take as relational signature $L$ the set $W$ itself. The arity of each element of $L$ is the arity of this element, considered as a relation. Moreover, for every $\varrho\in L$ we define  $\varrho_{\bC_W}:=\varrho$. It is easy to see that $W$ is a relational algebra if and only if $W=[\bC_W]_\RA$. Moreover, in general $[\bC_W]_\RA$ is the smallest relational algebra that contains $W$. We will write  $[W]_\RA$ instead of $[\bC_W]_\RA$. Moreover, instead of $\Pol(\bC_W)$ we will write  $\Pol(W)$.

As usual, we define $O_A^{(n)}$ to be the set of all functions from $A^n$ to $A$, and we set 
\[O_A:=\bigcup_{n\in\bN\setminus\{0\}} O_A^{(n)}.\]

If $f\in O_A^{(n)}$, and $\varrho\subseteq A^m$ we call $\varrho$ \emph{invariant} for $f$ (and we write $f\pres\varrho$ whenever 
$f$ is a polymorphism of $\bC_{\{\varrho\}}$). In other words, $f\pres\varrho$ if and only if for all $\ta_1,\ldots,\ta_m\in A^m$ with $\ta_i=(a_{i,1},\ldots,a_{i,n})$ holds that
\[ 
	\begin{bmatrix}
		a_{1,1} \\
		\vdots \\
		a_{m,1} \\
	\end{bmatrix}\in\varrho,\ldots,
	\begin{bmatrix}
		a_{1,n} \\
		\vdots \\
		a_{m,n} \\
	\end{bmatrix}\in\varrho \Longrightarrow
	\begin{bmatrix}
		f(a_{1,1},\ldots,a_{1,n}) \\
		\vdots \\
		f(a_{m,1},\ldots,a_{m,n}) \\
	\end{bmatrix}\in\varrho.\]
For any subset $F\subseteq O_A$ we define 
\[\Inv(F):=\{\varrho\in R_A\mid \forall f\in F: f\pres\varrho\},\]
and for any $m\in\bN\setminus\{0\}$, we define $\Inv^{(m)}(F):=\Inv(F)\cap R_A^{(m)}$. 
The operators $\Inv$ and $\Pol$ form a Galois-connection between sets of functions and sets of relations on $A$. It is well known that every Galois-closed set of relations forms a relational algebra.  
\begin{lemma}\label{algclonecomp}
	Let $\bA$ be a relational structure. Then
	\[[\bA]_\RA\subseteq \overline{[\bA]_\RA}\subseteq \Inv(\Pol(\bA)).\]
\end{lemma}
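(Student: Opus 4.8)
The plan is to prove the two inclusions separately. The first inclusion $[\bA]_\RA\subseteq\overline{[\bA]_\RA}$ is essentially immediate from the definitions in Section~\ref{sec4}: a positive primitively definable relation $\sigma=\Psi^\bA$ with $\Psi$ principal is, in particular, of the form $\Psi^\bA$ for a closed positive primitive type, hence Galois-closed; so $\sigma\in\overline{[\bA]_\RA}$. (One has to remember only that $\overline{[\bA]_\RA}$ collects all the sets $\Psi^\bA$ where $\Psi$ ranges over closed positive primitive types, and that $\ppTp_\bA(\sigma)$ is automatically closed.)

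The substantial inclusion is $\overline{[\bA]_\RA}\subseteq\Inv(\Pol(\bA))$. First I would reduce to the $m$-ary parts for each fixed $m\in\bN\setminus\{0\}$. So fix $\sigma\in\overline{[\bA]^{(m)}_\RA}$, fix $f\in\Pol^{(n)}(\bA)$, and aim to show $f\pres\sigma$. Using that $\sigma$ is Galois-closed, write $\sigma=\big(\ppTp_\bA(\sigma)\big)^\bA=\bigcap_{\varphi\in\ppTp_\bA(\sigma)}\varphi^\bA$. Since $\Inv^{(m)}(\{f\})$ is itself a relational algebra (it is a Galois-closed set of relations, as noted just before the lemma), hence closed under arbitrary intersections, it suffices to prove that $f\pres\varphi^\bA$ for each positive primitive $\varphi(x_1,\dots,x_m)\in\ppTp_\bA(\sigma)$ — but this is even easier: it suffices to show that for \emph{every} positive primitive formula $\varphi$, the relation $\varphi^\bA$ is invariant under all polymorphisms of $\bA$. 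In other words, the heart of the matter is the standard fact that the relational algebra $[\bA]_\RA$ (i.e., the relations primitively positively definable over $\bA$) is contained in $\Inv(\Pol(\bA))$.

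To prove that core fact I would proceed by structural induction on the positive primitive formula $\varphi$. For an atomic formula $\varrho(x_{i_1},\dots,x_{i_r})$ with $\varrho\in L$: if $\ta_1,\dots,\ta_n\in\varrho^\bA$ (the relation defined by the atom, viewed as an $m$-ary relation) with $\ta_j=(a_{j,1},\dots,a_{j,n})$, then applying $f$ componentwise and using that $f$ is a homomorphism $\bA^n\to\bA$ — concretely, the characterization of $n$-ary partial polymorphisms quoted in the Preliminaries, applied to the tuples lying in $\varrho_\bA$ — gives $\big(f(a_{\bullet,1}),\dots,f(a_{\bullet,n})\big)$... more precisely the column obtained by applying $f$ lies again in $\varrho_\bA$, so the resulting $m$-tuple lies in $\varrho^\bA$; the extra "dummy" coordinates not appearing in the atom cause no trouble. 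For the conjunction step, $(\varphi_1\wedge\varphi_2)^\bA=\varphi_1^\bA\cap\varphi_2^\bA$, and the intersection of two $f$-invariant relations is $f$-invariant. For the existential quantifier step, suppose $\psi(x_1,\dots,x_m)=\exists x_{m+1}\,\varphi(x_1,\dots,x_{m+1})$ and that $\varphi^\bA\in\Inv(\{f\})$. Given $\ta_1,\dots,\ta_n\in\psi^\bA$, choose witnesses $w_1,\dots,w_n\in A$ with $(\ta_j,w_j)\in\varphi^\bA$; then $(f(\ta_1,\dots,\ta_n),f(w_1,\dots,w_n))\in\varphi^\bA$ by the induction hypothesis, so $f(\ta_1,\dots,\ta_n)\in\psi^\bA$. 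Iterating handles blocks of existential quantifiers, which is all a positive primitive formula contains on top of conjunctions of atoms.

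The main obstacle is essentially bookkeeping rather than a genuine difficulty: one must carefully keep track of the variable arities — a subformula naturally lives in a relation of one arity while the relation we care about lives in $A^m$ — and one must phrase the atomic case precisely in terms of the columnwise action of $f$ as spelled out in the displayed characterization of partial polymorphisms. Once the core fact $[\bA]_\RA\subseteq\Inv(\Pol(\bA))$ is in hand, closing up under arbitrary intersections (legitimate because $\Inv(\Pol(\bA))$ is a relational algebra, in particular closed under intersections of relations of equal arity) upgrades it to $\overline{[\bA]_\RA}\subseteq\Inv(\Pol(\bA))$, completing the proof.
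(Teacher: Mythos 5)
Your proposal is correct and follows essentially the same route as the paper: the paper dismisses the core fact that every primitively positively definable relation is invariant under all polymorphisms as folklore and then closes up under arbitrary intersections, while you simply fill in that folklore fact by structural induction on the formula (atoms via the columnwise characterization of polymorphisms, conjunction via intersection, existential quantifiers via witnesses). One small repair: the closure of $\Inv(\{f\})$ and $\Inv(\Pol(\bA))$ under \emph{arbitrary} intersections should be justified directly from the definition of invariance (if $f$ preserves each $\varrho_i$ then it preserves $\bigcap_i\varrho_i$), not from their being relational algebras, since a relational algebra is only guaranteed to be closed under primitive positive definitions and hence finite intersections; the needed fact is immediate, so this does not affect the argument.
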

\begin{proof}
	It is folklore that every relation that can be defined by a positive primitive formula over $\bA$ is invariant under all polymorphisms of $\bA$. However, this just means that we generally have $[\bA]_\RA\subseteq\Inv(\Pol(\bA))$. Note that for every $m\in\bN\setminus\{0\}$ we have that $\overline{[\bA]_\RA^{(m)}}$ is the closure of $[\bA]_\RA^{(m)}$ under arbitrary intersections. Since $\Inv^{(m)}(\Pol(\bA))$ is closed with respect to arbitrary intersections, it follows that $\overline{[\bA]_\RA^{(m)}}\subseteq \Inv^{(m)}(\Pol(\bA))$.  We conclude that $\overline{[\bA]_\RA}\subseteq\Inv(\Pol(\bA))$. 
\end{proof}
In general the Galois-closed sets of relations are called \emph{relational clones}. It is one of the fundamental theorems of general algebra that whenever $A$ is a finite set then the relational clones on $A$ and the relational algebras on $A$ coincide. However, when $A$ is infinite, then there exist relational algebras that are not relational clones. A relational algebra $W$ is a relational clone if and only if $W=\Inv(\Pol(W))$. 

A \emph{clone} on $A$ is a subset of $O_A$ that contains all projections and that is closed with respect to composition. Every Galois closed set of functions is a clone. In general, Galois-closed sets of functions on $A$ are called \emph{local clones}.  Another fundamental theorem of general algebra states that on a finite set $A$ all clones are local. However, when $A$ is infinite, then there are clones that are not local. Standard references for clones on finite sets are \cite{PoeKal79,Sze86,Lau06}. For a survey of known results about clones on infinite sets we refer to \cite{GolPin08}. 

If for some relational structure $\bA$ we have $[\bA]_\RA=\Inv(\Pol(\bA))$, then this means that a relation $\varrho$ on $A$ is positive primitively definable over $\bA$ if and only if it is invariant for all polymorphisms of $\bA$. In the following it is our goal to characterize all those relational structures with this property. 
\begin{lemma}\label{invpolalg}
	Let $\bA$ be a relational structure and let $m\in\bN\setminus\{0\}$. Then the closure system $\Inv^{(m)}(\Pol(\bA))$ is algebraic. 
\end{lemma}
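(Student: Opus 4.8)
The plan is to verify that $\Inv^{(m)}(\Pol(\bA))$ is an \emph{inductive} closure system and then to appeal to Schmidt's Hauptsatz (Theorem~\ref{schmidt}), by which the inductive and the algebraic closure systems on a fixed set coincide. First I would record that $\Inv^{(m)}(\Pol(\bA))$ really is a closure system on $A^m$: as the $m$-ary part $\Inv(\Pol(\bA))\cap R_A^{(m)}$ of the relational clone $\Inv(\Pol(\bA))$, it is closed under arbitrary intersections of $m$-ary relations, and it contains the full relation $A^m$, which is invariant under every operation. By the characterisation of inductive closure systems recalled in the subsection on algebraic closure systems, it then suffices to show that $\Inv^{(m)}(\Pol(\bA))$ is closed under unions of upward-directed families of its members.

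The key step is the following. Let $(\varrho_i)_{i\in I}$ be an upward-directed family in $\Inv^{(m)}(\Pol(\bA))$ and set $\varrho:=\bigcup_{i\in I}\varrho_i$. I would show $f\pres\varrho$ for every $f\in\Pol(\bA)$. So fix $f\in\Pol^{(n)}(\bA)$ and unwind the definition of $\pres$: one is given $n$ columns $\tc_1,\dots,\tc_n\in\varrho$ (each an element of $A^m$), and must check that applying $f$ row-wise to the $m\times n$ matrix with these columns yields a tuple that again lies in $\varrho$. Each $\tc_j$ lies in some $\varrho_{i_j}$, and by upward-directedness there is a single index $i\in I$ with $\varrho_{i_1},\dots,\varrho_{i_n}\subseteq\varrho_i$, so $\tc_1,\dots,\tc_n\in\varrho_i$. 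Since $f\pres\varrho_i$ by hypothesis, the row-wise image lies in $\varrho_i\subseteq\varrho$. Hence $f\pres\varrho$, i.e.\ $\varrho\in\Inv^{(m)}(\Pol(\bA))$, so the closure system is inductive, and Theorem~\ref{schmidt} then yields the claim.

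I do not expect a genuine obstacle here: the argument is a routine application of Schmidt's theorem together with the observation that invariance under an $n$-ary operation is a constraint involving only $n$ elements of the relation at a time, so it survives directed unions. The only point requiring care is the bookkeeping in the definition of $\pres$ — one must keep the $n$ columns (the tuples fed to $f$) apart from the $m$ rows (the coordinates of the resulting $m$-tuple) — and the directedness of $(\varrho_i)_{i\in I}$ is used precisely to absorb the finitely many relevant columns into a single member of the family.
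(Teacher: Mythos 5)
Your argument is correct, but it takes a different route from the paper. You verify directly that $\Inv^{(m)}(\Pol(\bA))$ is an inductive closure system --- invariance under an $n$-ary polymorphism is a condition involving only finitely many tuples of the relation at a time, so it passes to unions of upward-directed families, after absorbing the finitely many relevant columns into a single member --- and then you invoke the equivalence of ``inductive'' and ``algebraic'' from Schmidt's Hauptsatz (Theorem~\ref{schmidt}). The paper instead uses the definition of an algebraic closure system head-on: it forms the algebra $\mathbb{A}$ on $A$ whose basic operations are exactly the polymorphisms of $\bA$, and proves that a relation $\sigma\subseteq A^m$ lies in $\Inv^{(m)}(\Pol(\bA))$ if and only if $\sigma$ is the universe of a subalgebra of $\mathbb{A}^m$; since $\Inv^{(m)}(\Pol(\bA))$ is thus the subuniverse closure system of an algebra, it is algebraic by definition. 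The two proofs rest on the same locality of the invariance condition; yours is somewhat more elementary in that it avoids setting up the power algebra and checking the coordinatewise action of its operations, at the cost of routing through Schmidt's characterization, while the paper's version gives the slightly more structural piece of information that $\Inv^{(m)}(\Pol(\bA))$ is literally the subalgebra lattice of $\mathbb{A}^m$. One small point of care in your write-up: directedness is needed for arbitrary finite subfamilies, not just pairs, but this follows from the pairwise condition by induction, and your bookkeeping of columns versus rows in the definition of $\pres$ is exactly right.
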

\begin{proof}
	Consider the algebra $\mathbb{A}$ with carrier $A$ that has as basic operations all elements of $\Pol(\bA)$. We will show:
	\begin{equation}\label{eq4}
		\forall\sigma\subseteq A^m : \sigma\in\Inv^{(m)}(\Pol(\bA)) \iff \sigma \text{ induces  a subalgebra of } \mathbb{A}^m.
	\end{equation}
	``$\Rightarrow$'' Let $\sigma\in\Inv^{(m)}(\Pol(\bA))$. Let $f^{\mathbb{A}}$ be a basic operation of $\mathbb{A}$ --- say, of arity $n$. Then, by definition, $f^{\mathbb{A}}\in\Pol^{(n)}(\bA)$. Let $f^{\mathbb{A}^m}$ be the corresponding operation of $\mathbb{A}^m$. Let $\ta_1,\dots,\ta_n\in \sigma$ with $\ta_i=(a_{i,1},\dots,a_{i,m})$ for $1\le i\le n$. Then we have
	\begin{equation}\label{eq5}
	 f^{\mathbb{A}^m}(\ta_1,\dots,\ta_n) = \big(f^{\mathbb{A}}(a_{1,1},\dots,a_{n,1}),\dots,f^{\mathbb{A}}(a_{1,m},\dots,a_{n,m})\big)=:\tb. 
	\end{equation}
	Since $f^{\mathbb{A}}\in\Pol(\bA)$, it follows that $\tb\in\sigma$. Thus, $\sigma$ is a universe of a subalgebra of $\mathbb{A}$.   
	
	``$\Leftarrow$'' Let $\sigma$ be the universe of a subalgebra of $\mathbb{A}^m$, and let $g\in\Pol^{(n)}(\bA)$. Then $g$ is equal to a basic operation $f^{\mathbb{A}}$ of $\mathbb{A}$. Let $f^{\mathbb{A}^m}$ be the corresponding basic operation of $\mathbb{A}^m$. Finally, let $\ta_1,\dots,\ta_n\in \sigma$ with $\ta_i=(a_{i,1},\dots,a_{i,m})$ for $1\le i\le n$. Then, by \eqref{eq5}, we have 
	\[ \big(g(a_{1,1},\dots,a_{n,1}),\dots,g(a_{1,m},\dots,a_{n,m})\big)= f^{\mathbb{A}^m}(\ta_1,\dots,\ta_n)=:\tb.\]
	In particular, we conclude that $\tb\in\sigma$. Thus, $g$ preserves $\sigma$, whence $\sigma\in\Inv^{(m)}(\Pol(\bA))$. 
	
	Thus \eqref{eq4} is proved. From the definition of algebraic closure systems, it follows that $\Inv^{(m)}(\Pol(\bA))$ is algebraic. 
\end{proof}

\begin{theorem}\label{odrugom}
	Let $\bA$ be a countable algebraic  po\-ly\-mor\-phism-ho\-mo\-ge\-ne\-ous relational structure, that has principal positive primitive types. Then  $[\bA]_{\RA}=\Inv(\Pol (\bA))$.
\end{theorem}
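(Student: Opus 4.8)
The plan is to deduce the theorem from the collected implications in Section~\ref{sec4} together with Lemma~\ref{algclonecomp} and Lemma~\ref{invpolalg}. First I would observe that, since $\bA$ is countable, polymorphism-homogeneity and weak polymorphism-homogeneity coincide, so $\bA$ is weakly polymorphism-homogeneous. Being also algebraic, Proposition~\ref{wPHPL} applies and tells us that $\bA$ is polylocal, i.e.\ $0$-polylocal. Next, since $\bA$ has principal positive primitive types, Lemma~\ref{princint} gives $[\bA]_\RA=\overline{[\bA]_\RA}$, and in particular $[\bA]^{(m)}_\RA=\overline{[\bA]^{(m)}_\RA}$ is closed under arbitrary intersections for every $m$. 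Combined with Lemma~\ref{algclonecomp} this already gives the inclusion $[\bA]_\RA=\overline{[\bA]_\RA}\subseteq\Inv(\Pol(\bA))$, so only the reverse inclusion $\Inv(\Pol(\bA))\subseteq[\bA]_\RA$ remains.

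For the reverse inclusion, fix $m\in\bN\setminus\{0\}$ and a relation $\sigma\in\Inv^{(m)}(\Pol(\bA))$. I would exploit that $\Inv^{(m)}(\Pol(\bA))$ is an algebraic closure system (Lemma~\ref{invpolalg}), so by Schmidt's Theorem~\ref{schmidt} the closure of $\sigma$ in that system is the union of the closures of its finite subsets; since $\sigma$ is already closed there, $\sigma=\bigcup_{\tau\subseteq\sigma\text{ finite}}\overline{\tau}^{\,\Inv\Pol}$, where $\overline{\tau}^{\,\Inv\Pol}$ denotes the smallest invariant relation containing $\tau$. The key point is then to identify, for a finite $\tau\subseteq A^m$, this invariant closure with $(\ppTp_\bA(\tau))^\bA$. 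One inclusion is Lemma~\ref{algclonecomp} applied to the canonical structure on $\tau$ (or directly: $(\ppTp_\bA(\tau))^\bA$ is positive primitively definable, hence invariant, and contains $\tau$, so it contains $\overline{\tau}^{\,\Inv\Pol}$). For the other inclusion one uses polymorphism-homogeneity: given $\tb\notin\overline{\tau}^{\,\Inv\Pol}$, the map sending the columns of $\tau$ to the corresponding entries of $\tb$ fails to be consistent with some polymorphism, and conversely if $\tb\in(\ppTp_\bA(\tau))^\bA$ then by polylocality $\tb\in(\ppTp_\bA^{(0)}(\tau))^\bA$, meaning the coordinate-map $\bM\to\bA$ (with $\bM\le\bA^{\ell}$ induced by the columns of $\tau$, $\ell=|\tau|$) is a homomorphism, which by polymorphism-homogeneity extends to an $\ell$-ary polymorphism of $\bA$ witnessing $\tb\in\overline{\tau}^{\,\Inv\Pol}$. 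Hence $\overline{\tau}^{\,\Inv\Pol}=(\ppTp_\bA(\tau))^\bA\in[\bA]^{(m)}_\RA$ for every finite $\tau$.

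Finally I would put the pieces together: $\sigma=\bigcup_{\tau}(\ppTp_\bA(\tau))^\bA$ is a union of members of $[\bA]^{(m)}_\RA$; but by Proposition~\ref{wPHPL} and Lemma~\ref{localg} (algebraicity), together with Lemma~\ref{l34} applied with $k=0$, a relation lies in $\overline{[\bA]^{(m)}_\RA}$ precisely when it equals $\bigcup_{\tau\subseteq\sigma\text{ finite}}(\ppTp_\bA^{(0)}(\tau))^\bA$, and polylocality lets us replace $\ppTp_\bA^{(0)}$ by $\ppTp_\bA$ here. Thus $\sigma\in\overline{[\bA]^{(m)}_\RA}=[\bA]^{(m)}_\RA$, which is the desired inclusion. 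I expect the main obstacle to be the bookkeeping in the middle step — matching the abstract algebraic closure $\overline{\tau}^{\,\Inv\Pol}$ with the syntactic object $(\ppTp_\bA(\tau))^\bA$, and correctly translating "column map extends to a polymorphism" into membership of $\tb$ in the invariant closure — rather than anything conceptually deep, since all the heavy lifting has already been done in Propositions~\ref{wPHPL}, \ref{watwPH} and Lemmas~\ref{princint}, \ref{mainlemma}, \ref{localg}, \ref{algclonecomp}, \ref{invpolalg}.
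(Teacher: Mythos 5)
Your proposal is correct and follows essentially the same route as the paper: decompose $\sigma\in\Inv^{(m)}(\Pol(\bA))$ via the algebraic closure system of invariant relations (Lemma~\ref{invpolalg} plus Schmidt's Theorem~\ref{schmidt}), identify the invariant closure of each finite $\tau$ with $\big(\ppTp_\bA(\tau)\big)^\bA$ by extending the column map to a polymorphism, and finish using algebraicity of $\bA$ and Lemma~\ref{princint}. The only difference is cosmetic: the paper obtains the column map as a homomorphism directly from $\ppTp_\bA(\tau)\subseteq\ppTp_\bA(\tb)$ and gets $\sigma\in\overline{[\bA]_\RA^{(m)}}$ straight from Schmidt's theorem, so your detour through Proposition~\ref{wPHPL} (polylocality) and Lemma~\ref{l34} is harmless but not needed.
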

\begin{proof}
	We will show that for every $m\in \bN\setminus\{0\}$
	\[
	\Inv^{(m)}(\Pol (\bA)) =[\bA]^{(m)}_{\RA}.
	\]
	From Lemma~\ref{algclonecomp} we have that $[\bA]^{(m)}_{\RA}\subseteq \Inv^{(m)}(\Pol (\bA))$. So let $\sigma\in\Inv^{(m)}(\Pol(\bA))$. By Lemma~\ref{invpolalg},  we have that $\Inv^{(m)}(\Pol(\bA))$ is an algebraic closure system. Let us denote the closure operator of this closure system with $\Gamma_{\Pol(\bA)}$. Then, by Theorem~\ref{schmidt}, we have
	\begin{equation}\label{s}
		\sigma=\bigcup_{\substack{\tau\subseteq\sigma\\\tau\text{ finite}}} \Gamma_{\Pol(\bA)}(\tau).
	\end{equation}
	We claim now that 
	\begin{equation}
		\forall\tau\subseteq A^m\text{ finite }: \big(\ppTp_\bA(\tau)\big)^\bA= \Gamma_{\Pol(\bA)}(\tau).\label{ss}
	\end{equation}
	So let $\tau\subseteq A^m$ be finite. Then we have that $\big(\ppTp_\bA(\tau)\big)^\bA\in\overline{[\bA]_\RA^{(m)}}$. By Lemma~\ref{algclonecomp}, we have $\overline{[\bA]_\RA^{(m)}}\subseteq\Inv^{(m)}(\Pol(\bA))$. Thus, we have $\Gamma_{\Pol(\bA)}(\tau)\subseteq\big(\ppTp_\bA(\tau)\big)^\bA$. To show the other inclusion, let $\tb\in\big(\ppTp_\bA(\tau)\big)^\bA$. Suppose that $\tau=\{\ta_1,\dots ,\ta_n\}$, with $\ta_i=(a_{i,1},\dots,a_{i,m})$, and let $\bB$ be the substructure of $\bA^n$ induced by $\{(a_{1,j},\dots, a_{n,j})\mid 1\le j\le m\}$ --- that is, we have the following situation:
	\[
	\begin{array}{c}
	\begin{bmatrix}
		a_{1,1} & a_{2,1} &\dots & \Rnode{an1}{a_{n,1}}\\
		a_{1,2} & a_{2,2} &\dots & \Rnode{an2}{a_{n,2}}\\
		\vdots & \vdots &\ddots & \vdots\\
		\Rnode{a1m}{a_{1,m}} & \Rnode{a2m}{a_{2,m}} &\dots & \Rnode{anm}{a_{n,m}}
		\uput{2ex}[-90]{-90}({a1m}){\in\tau}
		\uput{2ex}[-90]{-90}({a2m}){\in\tau}
		\uput{2ex}[-90]{-90}({anm}){\in\tau}
		\uput{4ex}[0]{0}({an1}){\in\bB}
		\uput{4ex}[0]{0}({an2}){\in\bB}
		\uput{4ex}[0]{0}({anm}){\in\bB.}
	\end{bmatrix}\\\vspace{2ex}
	\end{array}
	\]
	Since  $\ppTp_\bA(\tau)\subseteq\ppTp_\bA(\tb)$, it follows that the mapping $f:\bB\rightarrow \bA$ defined by $f(a_{1,j}, \dots, a_{n,j}):=b_j$, for $j\in \{1,\dots ,m\}$ defines a local polymorphism of $\bA$. Since $\mathbf{A}$ is po\-ly\-mor\-phism-ho\-mo\-ge\-ne\-ous, it follows that $f$ can be extended to a polymorphism $\hat{f}$ of $\bA$. But, from this it follows at once that $\bar{b}\in \Gamma_{\Pol (\bA)} (\tau)$. Thus we proved \eqref{ss}.

	Now, using \eqref{s} together with \eqref{ss}, we obtain
	\begin{equation}\label{neweq}
	\sigma=\bigcup_{\substack{\tau\subseteq\sigma\\\tau\text{ finite}}} \big(\ppTp_\bA(\tau)\big)^\bA.
	\end{equation}

	Since $\bA$ is algebraic, from  \eqref{neweq} together with Theorem~\ref{schmidt} it follows that $\sigma\in \overline{[\bA]_\RA^{(m)}}$. Finally, since $\bA$ has principal positive primitive types, from Lemma~\ref{princint} we conclude that $\overline{[\bA]_\RA^{(m)}}= [\bA]_\RA^{(m)}$. Thus $\sigma\in [\bA]_\RA^{(m)}$.
\end{proof}

\begin{theorem}\label{locclosedStruc}
	Let $\bA$ be a countable relational structure. Then $[\bA]_{\RA}=\Inv(\Pol (\bA))$ if and only if $\bA$ is algebraic and has principal positive primitive types.
\end{theorem}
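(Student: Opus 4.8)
The statement to prove is Theorem~\ref{locclosedStruc}: for a countable relational structure $\bA$, we have $[\bA]_\RA = \Inv(\Pol(\bA))$ if and only if $\bA$ is algebraic and has principal positive primitive types. I would split the argument into the two implications. The direction ``$\Leftarrow$'' is essentially a corollary of what has already been proved, but it requires a small extra observation. The direction ``$\Rightarrow$'' is where the real work lies, and I expect the main obstacle to be there.

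For ``$\Leftarrow$'', assume $\bA$ is algebraic and has principal positive primitive types. The key point I would need is that $\bA$ is then polymorphism-homogeneous (equivalently, by the diagram, weakly polymorphism-homogeneous since countability lets us pass from weak to full). Having principal positive primitive types implies pp-atomic (Lemma~\ref{lem:add_imp}); to invoke Proposition~\ref{watwPH} I would want that every complete positive primitive type is generated by its quantifier-free part, or alternatively that $\bA$ is polylocal. This is the delicate step: I would argue that ``principal positive primitive types'' together with ``algebraic'' forces polylocality. Indeed, if $\Psi = \ppTp_\bA(\tau)$ is a complete type, then since it is closed (being $\ppTp_\bA$ of something) and principal, $\Psi^\bA = \psi^\bA$ for a single formula $\psi$; but this alone does not immediately say $\Psi$ is generated by $\Psi^{(0)}$, so I would need to either strengthen the hypothesis reading or, more likely, note that having principal positive primitive types plus algebraicity suffices to run the argument of Theorem~\ref{odrugom} in reverse. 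The cleanest route: apply Theorem~\ref{odrugom} directly once we know $\bA$ is polymorphism-homogeneous. So the whole ``$\Leftarrow$'' reduces to: \emph{algebraic $+$ principal positive primitive types $\Rightarrow$ polymorphism-homogeneous}. I would prove this by showing $\bA$ is polylocal (using Proposition~\ref{gentypesPL} after checking that principal positive primitive types forces complete types to be generated by $\Psi^{(0)}$ — here one uses that a principal closed type is generated by any element $\psi$ with $\psi^\bA = \Psi^\bA$, and then that such $\psi$ can be taken quantifier-free is \emph{not} automatic, so I would instead bypass this and use Proposition~\ref{watwPH} with the polylocality branch, obtaining polylocality from Lemma~\ref{mainlemma} via a direct verification that $(\ppTp_\bA(\tau))^\bA = (\ppTp_\bA^{(0)}(\tau))^\bA$ using algebraicity and principality). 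Then Theorem~\ref{odrugom} gives $[\bA]_\RA = \Inv(\Pol(\bA))$.

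For ``$\Rightarrow$'', assume $[\bA]_\RA = \Inv(\Pol(\bA))$. First, algebraicity: by Lemma~\ref{invpolalg}, $\Inv^{(m)}(\Pol(\bA))$ is an algebraic closure system for every $m$; since it equals $[\bA]_\RA^{(m)} = \overline{[\bA]_\RA^{(m)}}$ (the latter equality because an algebraic closure system is in particular closed under arbitrary intersections, so $[\bA]_\RA^{(m)}$ is intersection-closed, hence equals its intersection-closure $\overline{[\bA]_\RA^{(m)}}$), we get $\overline{[\bA]_\RA^{(m)}} = \Inv^{(m)}(\Pol(\bA))$ is algebraic, i.e., $\bA$ is algebraic. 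Second, principal positive primitive types: we have just observed that $[\bA]_\RA^{(m)}$ is closed under arbitrary intersections for every $m$, which is exactly condition (3) of Lemma~\ref{princint}, hence $\bA$ has principal positive primitive types by the equivalence there.

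\textbf{Main obstacle.} The genuinely substantive step is the ``$\Leftarrow$'' direction, specifically establishing that an algebraic structure with principal positive primitive types is polymorphism-homogeneous. The subtlety is that ``principal'' only gives a single \emph{positive primitive} generator, not a quantifier-free one, so one cannot directly read off polylocality or quantifier elimination; instead one must feed principality and algebraicity into the machinery of Theorem~\ref{odrugom}. I would therefore structure the proof of Theorem~\ref{locclosedStruc} so that the ``$\Leftarrow$'' half essentially cites Theorem~\ref{odrugom}, after separately verifying that its hypothesis ``polymorphism-homogeneous'' holds — and for that verification the path through Proposition~\ref{watwPH} (polylocal branch) plus Lemma~\ref{mainlemma}, checking $(\ppTp_\bA(\tau))^\bA = (\ppTp_\bA^{(0)}(\tau))^\bA$ directly from the definitions using that every closed type is principal, is the one I expect to work, with the countability hypothesis used only to upgrade weak polymorphism-homogeneity to polymorphism-homogeneity via the earlier Lemma.
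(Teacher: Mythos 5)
Your ``$\Rightarrow$'' direction is correct and essentially the paper's argument (the paper gets algebraicity by noting $\Inv(\Pol(\bA))$ is closed under directed unions and invoking Theorem~\ref{schmidt}, you get it via Lemma~\ref{invpolalg}; principality via Lemma~\ref{princint} in both cases). The gap is in your ``$\Leftarrow$'' direction: the reduction you propose --- \emph{algebraic $+$ principal positive primitive types $\Rightarrow$ $\bA$ is polymorphism-homogeneous} --- is false, and so is the identity $\big(\ppTp_\bA(\tau)\big)^\bA=\big(\ppTp^{(0)}_\bA(\tau)\big)^\bA$ that you intend to ``verify directly from the definitions using algebraicity and principality''. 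Take $\bA$ to be the path with three vertices $a,b,c$ and edges $\{a,b\},\{b,c\}$. Being finite, $\bA$ is weakly oligomorphic, hence algebraic with principal positive primitive types by Lemma~\ref{lem:add_imp}; but it is not even homomorphism-homogeneous (the local homomorphism $a\mapsto a$, $c\mapsto b$ cannot be extended, since $a$ and $b$ have no common neighbour), so it is certainly not polymorphism-homogeneous. Concretely, with $\tau=\{(a,c)\}$ and $\tb=(a,b)$ one has $\ppTp^{(0)}_\bA(\tau)\subseteq\ppTp^{(0)}_\bA(\tb)$, while the formula $\exists y\,\big(\varrho(x_1,y)\wedge\varrho(x_2,y)\big)$ belongs to $\ppTp_\bA(\tau)\setminus\ppTp_\bA(\tb)$; hence $\tb\in\big(\ppTp^{(0)}_\bA(\tau)\big)^\bA\setminus\big(\ppTp_\bA(\tau)\big)^\bA$ and $\bA$ is not polylocal. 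Note that the conclusion of Theorem~\ref{locclosedStruc} does hold for this $\bA$ (finite base set), so the counterexample refutes only your intermediate claim, not the theorem.

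The way the paper escapes exactly the obstacle you flagged is not to prove anything about $\bA$ itself, but to change the structure: let $\widehat\bA$ be the canonical structure of $[\bA]_\RA$. Then every positive primitively definable relation is a basic relation of $\widehat\bA$, so $\widehat\bA$ trivially has quantifier elimination for positive primitive formulae; by Proposition~\ref{ppesee} every closed positive primitive type over $\widehat\bA$ is generated by its quantifier-free part. Since $[\widehat\bA]_\RA=[\bA]_\RA$, the structure $\widehat\bA$ inherits algebraicity and principal positive primitive types (hence pp-atomicity), so Proposition~\ref{watwPH} makes $\widehat\bA$ weakly polymorphism-homogeneous, countability upgrades this to polymorphism-homogeneity, and Theorem~\ref{odrugom} applied to $\widehat\bA$ gives $[\bA]_\RA=[\widehat\bA]_\RA=\Inv(\Pol(\widehat\bA))=\Inv(\Pol(\bA))$. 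So your overall skeleton (reduce to Theorem~\ref{odrugom} via Proposition~\ref{watwPH}) is the right one, but it must be run on $\widehat\bA$ rather than on $\bA$, and via the quantifier-free-generation branch of Proposition~\ref{watwPH} rather than via a polylocality claim for $\bA$ that is not available.
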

\begin{proof}
	($\Leftarrow$) Let $\widehat{\bA}$ be the canonical structure of $[\bA]_\RA$. Clearly, then $[\widehat\bA]_\RA=[\bA]_\RA$, and $\widehat\bA$ has quantifier elimination for positive primitive formulae. Hence, by Proposition~\ref{ppesee}, all closed positive primitive types over $\widehat\bA$ are generated by their quantifier free part. Clearly, $\widehat{\bA}$ is also algebraic and has principal positive primitive types.

	From Proposition~\ref{watwPH} it follows that $\widehat\bA$ is weakly po\-ly\-mor\-phism-ho\-mo\-ge\-ne\-ous. As $A$ is countable, it follows that $\widehat\bA$ is po\-ly\-mor\-phism-ho\-mo\-ge\-ne\-ous. Finally, from Theorem~\ref{odrugom} we obtain that
	\[[\bA]_\RA=[\widehat\bA]_\RA=\Inv(\Pol(\widehat\bA)) = \Inv(\Pol(\bA)).\]

	\noindent  ($\Rightarrow$) In general, we have
	\[[\bA]_\RA\subseteq \overline{[\bA]_\RA}\subseteq\Inv(\Pol(\bA)).\]
	So, if $[\bA]_\RA=\Inv(\Pol(\bA))$, then we have in particular, that $[\bA]_\RA=\overline{[\bA]_\RA}$. Hence, by Lemma~\ref{princint}, we have that $\bA$ has principal positive primitive types. 

	It is easy to see, that $\Inv(\Pol(\bA))$ is always closed with respect to arbitrary intersections and unions of upward directed sets of relations. Again using that $[\bA]_\RA=\overline{[\bA]_\RA}=\Inv(\Pol(\bA))$, we conclude with Theorem~\ref{schmidt}, that $\bA$ is algebraic. 
\end{proof}

\begin{remark}
	According to \cite{Rom11}, a structure $\bA$ with the property that $[\bA]_{\RA}=\Inv(\Pol (\bA))$ is also called a \emph{positive primitive structure}. It was proved in \cite[Thm.4]{BodNes06}, that every countable $\omega$-categorical structure is positive primitive (cf. also \cite[Prop.2.4]{Rom11}). Moreover, it was shown in \cite[Thm.2.5]{Rom11}, that every polymorphism-homogeneous relational structure over a signature with just one relation is positive primitive. Theorem~\ref{locclosedStruc} generalizes these results in that it gives a complete characterization of countable positive primitive structures. An immediate consequence of our result is that every weakly oligomorphic structure is positive primitive and hence, that every polymorphism-homogeneous structure over a finite signature is positive primitive, too. 
\end{remark}

\begin{corollary}[{Romov \cite[Thm.3.5]{Rom08}}]\label{locclosedRC}
  Let $W$ be a relational algebra on a countable set. Then $W$ is a relational clone (i.e.\ $W=\Inv(\Pol(W))$) if and only if $W$ is closed with respect to unions of upwards directed sets of relations and with respect to arbitrary intersections of relations of equal arities.
\end{corollary}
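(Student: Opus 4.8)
The plan is to apply Theorem~\ref{locclosedStruc} to the canonical structure $\bC_W$ and then to re-express its two hypotheses as closure properties of $W$ itself, using Lemma~\ref{princint} and Schmidt's Theorem~\ref{schmidt}.

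Since $W$ is a relational algebra we have $W=[\bC_W]_\RA$, and by definition $\Inv(\Pol(W))=\Inv(\Pol(\bC_W))$. As the carrier set is countable, Theorem~\ref{locclosedStruc} applies to $\bC_W$ and shows that $W=\Inv(\Pol(W))$ holds if and only if $\bC_W$ is algebraic and has principal positive primitive types. By Lemma~\ref{princint}, the condition ``$\bC_W$ has principal positive primitive types'' is in turn equivalent to the condition that $[\bC_W]^{(m)}_\RA=W^{(m)}$ be closed under arbitrary intersections for every $m\in\bN\setminus\{0\}$, i.e.\ to $W$ being closed under arbitrary intersections of relations of equal arity; and, when this holds, the same lemma gives $W^{(m)}=\overline{[\bC_W]^{(m)}_\RA}$ for every $m$.

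It remains to reformulate ``$\bC_W$ is algebraic''. Assuming the intersection condition just obtained, we have $\overline{[\bC_W]^{(m)}_\RA}=W^{(m)}$, so $\bC_W$ is algebraic exactly when each closure system $W^{(m)}$ is algebraic; by Theorem~\ref{schmidt}, together with the fact that the inductive closure systems are precisely those closed under unions of upwards directed families of closed sets, this amounts to saying that each $W^{(m)}$ --- equivalently, $W$ itself --- is closed under unions of upwards directed sets of relations. Combining the two reformulations with the biconditional supplied by Theorem~\ref{locclosedStruc} yields the asserted characterization. I do not anticipate a real obstacle: essentially all of the mathematical content is carried by Theorem~\ref{locclosedStruc}, and the only point to watch is that ``$\bC_W$ is algebraic'' becomes a statement about $W$ alone only after the intersection condition has been used to identify $W^{(m)}$ with the full Galois closure $\overline{[\bC_W]^{(m)}_\RA}$.
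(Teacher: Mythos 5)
Your proof is correct and takes essentially the same route as the paper: apply Theorem~\ref{locclosedStruc} to the canonical structure $\bC_W$ and translate its two hypotheses back into closure properties of $W$ via Lemma~\ref{princint} and Theorem~\ref{schmidt}. The only cosmetic difference is that the paper treats the forward direction as immediate (since $\Inv(\Pol(W))$ is always closed under arbitrary intersections and directed unions), whereas you run the same equivalences backwards; your remark that the intersection condition must be used before identifying $W^{(m)}$ with $\overline{[\bC_W]_\RA^{(m)}}$ is exactly the right point of care.
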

\begin{proof}
	($\Leftarrow$) Let $\bC_W$ be the canonical structure of $W$. Then $W=[\bC_W]_\RA$.	Since $W$ is closed with respect to unions of upwards directed sets of relations, by Theorem~\ref{schmidt} we obtain that $\bC_W$ is algebraic. On the other hand, $W$ is also closed with respect to arbitrary intersections, so by Lemma~\ref{princint} it follows that $\bC_W$ has principal positive primitive types.	Hence, by Theorem~\ref{locclosedStruc} we obtain that
	\[W=[\bC_W]_\RA=\Inv(\Pol(\bC_W)) = \Inv(\Pol(W)).\]
\noindent  ($\Rightarrow$) Clear.
\end{proof}

\subsection*{Weak oligomorphy revisited}
A set of relations $W\subseteq R_A$ on a set $A$ will be called \emph{weakly oligomorphic} if $W^{(m)}$ is finite, for all $m\in\bN\setminus\{0\}$. Clearly, a structure $\bA$ is weakly oligomorphic if and only if $[\bA]_\RA$ is weakly oligomorphic. 
\begin{proposition}\label{wo1}
	Let $\bA$ be a relational structure. Then the following are equivalent:
	\begin{enumerate}
		\item $\overline{[\bA]_\RA}$ is weakly oligomorphic,
		\item $[\bA]_\RA$ is weakly oligomorphic,
		\item $[\bA]_\WKA$ is weakly oligomorphic.
	\end{enumerate}
	Here $[\bA]_\WKA$ denotes the relational algebra that consists of all relations on $A$ that are definable by positive existential formulae over $\bA$.
\end{proposition}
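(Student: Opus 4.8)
The plan is to prove the cycle of implications $(1)\Rightarrow(2)\Rightarrow(3)\Rightarrow(1)$, exploiting the inclusions between the three relational algebras in question. First note the obvious containments: $[\bA]_\RA\subseteq\overline{[\bA]_\RA}$ always holds, and $[\bA]_\WKA\subseteq[\bA]_\RA$ because a positive existential formula is, up to logical equivalence over $\bA$, a disjunction of positive primitive formulae, and such a disjunction defines the union of the corresponding primitively positively definable relations, which lies in $[\bA]_\RA$ (the relational algebra is closed under taking the relation defined by any positive existential formula — this is the content of ``closed under positive primitive definitions'' together with finite unions being available). Actually one must be slightly careful: $[\bA]_\RA$ as defined is closed under \emph{positive primitive} definitions only; so the cleanest route is to observe that $[\bA]_\WKA$ is by definition the relations defined by positive existential formulae, each such relation is a finite union of members of $[\bA]_\RA$, and conversely each member of $[\bA]_\RA$ is a member of $[\bA]_\WKA$. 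Hence $[\bA]_\WKA^{(m)}$ and $[\bA]_\RA^{(m)}$ generate each other under finite unions.

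For $(1)\Rightarrow(2)$: since $[\bA]_\RA^{(m)}\subseteq\overline{[\bA]_\RA^{(m)}}$ and the latter is finite by assumption, the former is finite. For $(2)\Rightarrow(3)$: every $m$-ary positive existentially definable relation is a finite union of $m$-ary primitively positively definable relations; there are only finitely many of the latter by $(2)$, hence only finitely many finite unions of them, so $[\bA]_\WKA^{(m)}$ is finite. For $(3)\Rightarrow(1)$: assume $[\bA]_\WKA^{(m)}$ is finite. We must show $\overline{[\bA]_\RA^{(m)}}$ is finite. The key point is that an arbitrary Galois-closed relation $\sigma\in\overline{[\bA]_\RA^{(m)}}$ is an intersection of members of $[\bA]_\RA^{(m)}$; if $[\bA]_\RA^{(m)}$ is finite then so is the set of all such intersections, giving finiteness of $\overline{[\bA]_\RA^{(m)}}$. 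And $[\bA]_\RA^{(m)}\subseteq[\bA]_\WKA^{(m)}$ is finite by assumption. So the implication reduces to the trivial observation that a finite meet-subsemilattice (closed under arbitrary, i.e.\ finite, intersections) generated inside a finite poset is finite.

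I expect the main obstacle — or rather the only place demanding care — to be the interface between $[\bA]_\RA$ and $[\bA]_\WKA$, namely pinning down precisely that (a) every positive existential formula defines over $\bA$ a finite union of primitively positively definable relations, and (b) $[\bA]_\RA\subseteq[\bA]_\WKA$, so that finiteness transfers both ways through finite unions. Statement (a) is the standard prenex/distributive normal form argument: push existential quantifiers to the front and distribute over the disjunctions, obtaining a disjunction of positive primitive formulae; statement (b) is immediate since positive primitive formulae are in particular positive existential. Everything else is bookkeeping with finiteness of sets closed under the relevant Boolean-style operations (finite unions, finite intersections) inside a finite ambient family. No appeal to polymorphisms, algebraicity, or polylocality is needed; the proposition is purely about counting definable relations, and the three notions of ``definable'' coincide up to closure under finite unions, which preserves finiteness.

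Concretely I would write: ``$(1)\Rightarrow(2)$ and the inclusion $[\bA]_\RA\subseteq[\bA]_\WKA$ are clear. For $(2)\Rightarrow(3)$, observe that every positive existential formula is logically equivalent to a disjunction of positive primitive formulae, hence every relation in $[\bA]_\WKA^{(m)}$ is a finite union of relations from $[\bA]_\RA^{(m)}$; as the latter set is finite, so is the set of all finite unions of its members, proving $[\bA]_\WKA^{(m)}$ finite. For $(3)\Rightarrow(1)$, note $[\bA]_\RA^{(m)}\subseteq[\bA]_\WKA^{(m)}$ is finite, and every element of $\overline{[\bA]_\RA^{(m)}}$ is an intersection of elements of $[\bA]_\RA^{(m)}$; since there are only finitely many such intersections, $\overline{[\bA]_\RA^{(m)}}$ is finite.''
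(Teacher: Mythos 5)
Your proof is correct and follows essentially the same route as the paper: the same cycle $(1)\Rightarrow(2)\Rightarrow(3)\Rightarrow(1)$, with $(2)\Rightarrow(3)$ via the observation that members of $[\bA]_\WKA^{(m)}$ are finite unions of members of $[\bA]_\RA^{(m)}$, and $(3)\Rightarrow(1)$ via the fact that members of $\overline{[\bA]_\RA^{(m)}}$ are intersections of members of the (finite) set $[\bA]_\RA^{(m)}\subseteq[\bA]_\WKA^{(m)}$. The extra care you take in justifying the prenex/disjunctive normal form step is fine but not a departure from the paper's argument.
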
 
\begin{proof}
	$(1)\Rightarrow (2)$: For every $m$, we have $[\bA]_\RA^{(m)}\subseteq\overline{[\bA]_\RA^{(m)}}$. Hence, $[\bA]_\RA^{(m)}$ is finite, for every $m$.
	
	$(2)\Rightarrow (3)$: For every $m$, $[\bA]_\WKA^{(m)}$ is obtained from $[\bA]_\RA^{(m)}$ through closure with respect to finite unions. From finitely many relations, only finitely many finite unions can be formed. 
	
	$(3)\Rightarrow (1)$: For every $m$, $\overline{[\bA]_\RA^{(m)}}$  is obtained from $[\bA]^{(m)}_\RA$ through closure with respect to arbitrary intersections. If $[\bA]_\WKA^{(m)}$ is finite then so is its subset $[\bA]_\RA^{(m)}$. However, from finitely many relations, only finitely many intersections can be formed.
\end{proof}

When working over a countable basic set $A$, more can be said: 
\begin{proposition}[{Ma\v{s}ulovi\'c \cite{MasWOC}}]\label{wo2}
	Let $\bA$ be a relational structure over a countable basic set $A$. Then the following are equivalent:
	\begin{enumerate}
		\item $\bA$ is weakly oligomorphic,
		\item $\Inv(\End(\bA))$ is weakly oligomorphic,
		\item $\Inv(\Pol(\bA))$ is weakly oligomorphic.
		\item $\Inv(\Pol^{(k)}(\bA))$ is weakly oligomorphic, for some $k\ge 1$
		\item $\Inv(\Pol^{(k)}(\bA))$ is weakly oligomorphic, for all $k\ge 1$
	\end{enumerate}
\end{proposition}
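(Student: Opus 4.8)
The plan is to prove the cycle $(1)\Rightarrow(2)\Rightarrow(5)\Rightarrow(4)\Rightarrow(3)\Rightarrow(1)$, where only the first implication is substantial and is the only place where countability is needed. I would begin by recording that for every $m\in\bN\setminus\{0\}$ and every $k\ge 1$,
\[
  [\bA]_\RA^{(m)}\subseteq\overline{[\bA]_\RA^{(m)}}\subseteq\Inv^{(m)}(\Pol(\bA))\subseteq\Inv^{(m)}(\Pol^{(k)}(\bA))\subseteq\Inv^{(m)}(\End(\bA)).
\]
Here the first two inclusions are Lemma~\ref{algclonecomp}; the third holds because $\Pol^{(k)}(\bA)\subseteq\Pol(\bA)$ and $\Inv$ reverses inclusions; and the last holds because an endomorphism padded with dummy arguments is a $k$-ary polymorphism while preservation of a relation is insensitive to dummy arguments, so $\Inv^{(m)}(\Pol^{(k)}(\bA))\subseteq\Inv^{(m)}(\Pol^{(1)}(\bA))=\Inv^{(m)}(\End(\bA))$. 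Since subsets of finite sets are finite, and since $\bA$ is weakly oligomorphic if and only if $[\bA]_\RA$ is, this chain delivers $(2)\Rightarrow(5)\Rightarrow(4)\Rightarrow(3)\Rightarrow(1)$ at once.

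For $(1)\Rightarrow(2)$, the naive idea of identifying the set of $\End(\bA)$-images of an $m$-tuple with a positive primitive type requires $\bA$ to be homomorphism-homogeneous, which can fail (for instance for finite $\bA$). I would repair this by passing to the canonical structure $\widehat{\bA}:=\bC_{[\bA]_\RA}$. Since a self-map of $A$ preserves all basic relations of $\widehat{\bA}$ iff it preserves all positively primitively definable relations of $\bA$ iff it preserves all basic relations of $\bA$, we obtain $\End(\widehat{\bA})=\End(\bA)$, and therefore $\Inv(\End(\widehat{\bA}))=\Inv(\End(\bA))$. Moreover $[\widehat{\bA}]_\RA=[\bA]_\RA$, so $\widehat{\bA}$ is still weakly oligomorphic, and $\widehat{\bA}$ obviously has quantifier elimination for positive primitive formulae, since every positive primitive formula over it already defines one of its basic relations. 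By Corollary~\ref{corquant}, $\widehat{\bA}$ is weakly polymorphism-homogeneous, and since $A$ is countable it is polymorphism-homogeneous, hence in particular homomorphism-homogeneous.

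Now I would fix $m$ and work with the preorder $\preceq$ on $A^m$ given by $\ta\preceq\tb$ iff $\ppTp_{\widehat{\bA}}(\ta)\subseteq\ppTp_{\widehat{\bA}}(\tb)$. If $f\in\End(\widehat{\bA})$ sends $\ta$ to $\tb$ then $\ta\preceq\tb$, because homomorphisms preserve positive primitive formulae; conversely, if $\ta\preceq\tb$ then (using that equalities are positive primitive) the rule $a_i\mapsto b_i$ is a well-defined map on $\{a_1,\dots,a_m\}$ preserving every basic relation of $\widehat{\bA}$, i.e.\ a local endomorphism of $\widehat{\bA}$, which by homomorphism-homogeneity extends to some $f\in\End(\widehat{\bA})$ with $f(\ta)=\tb$. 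Hence $\{f(\ta)\mid f\in\End(\widehat{\bA})\}=\{\tb\mid\ta\preceq\tb\}$, so a relation $\sigma\subseteq A^m$ lies in $\Inv^{(m)}(\End(\bA))=\Inv^{(m)}(\End(\widehat{\bA}))$ precisely when it is an up-set of $(A^m,\preceq)$. Finally, weak oligomorphy of $\widehat{\bA}$ leaves only finitely many $m$-ary positively primitively definable relations, hence only finitely many distinct positive primitive types of $m$-tuples, so the quotient of $(A^m,\preceq)$ modulo $\ta\sim\tb:\Leftrightarrow(\ta\preceq\tb$ and $\tb\preceq\ta)$ is a finite poset, which has only finitely many up-sets. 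Therefore $\Inv^{(m)}(\End(\bA))$ is finite for every $m$, which is $(2)$.

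I expect the only genuine obstacle to be the passage from $\bA$ to $\widehat{\bA}$ inside $(1)\Rightarrow(2)$: one has to notice that $\bA$ itself need not be homomorphism-homogeneous, that replacing it by its canonical structure changes neither the endomorphism monoid nor $\Inv(\End(\cdot))$, and that this substitution buys homomorphism-homogeneity via the Section~\ref{sec4} equivalence (Corollary~\ref{corquant}) together with the coincidence of weak and ordinary polymorphism-homogeneity over countable sets; everything else is bookkeeping with the relevant Galois connections.
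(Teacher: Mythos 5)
Your proof is correct, and for the easy implications it coincides with the paper's: the paper proves $(2)\Rightarrow(5)$ by exactly your padding trick ($\hat f(x_1,\dots,x_k):=f(x_1)$), and $(5)\Rightarrow(4)\Rightarrow(3)\Rightarrow(1)$ by the same antitonicity/inclusion bookkeeping, so that part matches. The genuine difference is the substantial step: for $(1)\Leftrightarrow(2)$ the paper does not argue at all but simply invokes Proposition~\ref{wo1} (to pass to $[\bA]_\WKA$) and then cites the external result \cite[Thm.6.15]{AUpaper}, whereas you give a self-contained proof of $(1)\Rightarrow(2)$ inside the paper's own machinery: replace $\bA$ by the canonical structure $\widehat{\bA}=\bC_{[\bA]_\RA}$ (which leaves $\End$, $\Inv(\End(\cdot))$ and $[\cdot]_\RA$ unchanged and has quantifier elimination for positive primitive formulae), apply Corollary~\ref{corquant} plus the countable weak-PH${}={}$PH lemma to get homomorphism-homogeneity, and then identify $\Inv^{(m)}(\End(\bA))$ with the up-sets of the pp-type preorder on $A^m$, of which there are finitely many because weak oligomorphy leaves only finitely many $m$-ary pp-definable relations and hence finitely many type classes. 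This is exactly the canonical-structure maneuver the paper itself uses in Theorem~\ref{locclosedStruc}, redeployed to make Proposition~\ref{wo2} independent of the cited manuscript; what it costs is re-deriving (a special case of) the endolocality result of \cite{AUpaper}, and it correctly localizes the use of countability (and of equality atoms being positive primitive, which you rightly flag for well-definedness of the local map) to this one implication, with the rest of the cycle holding over arbitrary base sets.
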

\begin{proof}
	$(1)\iff (2)$: By Proposition~\ref{wo1}, $[\bA]_\WKA$ is weakly oligomorphic. Now the claim follows from \cite[Thm.6.15]{AUpaper}. 

	$(2)\Rightarrow (5)$: For every $f\in\End(\bA)$ we may define an $\hat{f}\in\Pol^{(k)}(\bA)$ through $\hat{f}(x_1,\dots,x_k):=f(x_1)$. Clearly,
	\[ \Inv(\End(\bA))=\Inv(\{\hat{f}\mid f\in\End(\bA)\}).\]
	Hence $\Inv(\Pol^{(k)}(\bA))\subseteq\Inv(\End(\bA))$. So, if $\Inv(\End(\bA))$ is weakly oligomorphic, then so is $\Inv(\Pol^{(k)}(\bA))$. Since $k$ was arbitrary, the claim follows.

	$(5)\Rightarrow (4)$ clear. 

	$(4)\Rightarrow (3)$: Since $\Pol^{(k)}(\bA)\subseteq \Pol(\bA)$, it follows that $\Inv(\Pol(\bA))\subseteq\Inv(\Pol^{(k)}(\bA))$. Hence, if $\Inv(\Pol^{(k)}(\bA))$ is weakly oligomorphic, then so is $\Inv(\Pol(\bA))$. 

	$(3)\Rightarrow (1)$: For every $m$, the elements of $[\bA]_\RA^{(m)}$ are invariant under the polymorphisms of $\bA$. That is, $[\bA]_\RA\subseteq \Inv(\Pol(\bA))$. Hence, if $\Inv(\Pol(\bA))$ is weakly oligomorphic, then so is $[\bA]_\RA$.
\end{proof}
\newcommand{\IC}{\operatorname{IC}}
\section{Polymorphism-homogeneity and the Baker-Pixley theorem}\label{sec5}

In this section, using the Baker-Pixley Theorem, we will prove that po\-ly\-mor\-phism-ho\-mo\-ge\-ne\-i\-ty for finite relational structures is decidable. 

\subsection*{Near unanimity functions}
Let $k\ge 3$. A $k$-ary function $f$ on a set $A$ is called \emph{near unanimity-function} if it fulfills the following, so called, \emph{near-unanimity-identities}:
\[ \forall x,y\,:\,f(y,x,\ldots,x)=f(x,y,\ldots,x)=\cdots = f(x,x,\ldots,y)=x.\]

\begin{proposition}\label{ICNU}
	Let $\bA=\str{A}{L}$ be a relational structure such that the arity of the relational symbols in $L$ is bounded above by $m\ge 2$. If every partial $(m+1)$-ary polymorphism of $\bA$ can be extended to a global polymorphism, then $\Pol(\bA)$ contains an $(m+1)$-ary near unanimity-function.
\end{proposition}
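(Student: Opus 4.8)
The plan is to exhibit one partial $(m+1)$-ary polymorphism of $\bA$ whose domain already pins down the near-unanimity identities, and then to apply the hypothesis to extend it to a global polymorphism, which will automatically be a near unanimity-function.

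First I would set
\[D:=\{(a_1,\dots,a_{m+1})\in A^{m+1}\mid \text{some value occurs in at least } m \text{ of the coordinates}\}.\]
Since $m\ge 2$ we have $2m>m+1$, so a tuple in $D$ cannot have two distinct values each occurring $\ge m$ times; hence each tuple in $D$ has a unique \emph{majority value}. Let $\bD$ be the substructure of $\bA^{m+1}$ induced by $D$ and define $f\colon\bD\to\bA$ by letting $f(\ta)$ be the majority value of $\ta$. Observe that $D$ is exactly the set of arguments appearing in the near-unanimity identities: for all $x,y\in A$, each of the tuples $(y,x,\dots,x)$, $(x,y,x,\dots,x)$, \dots, $(x,\dots,x,y)$ lies in $D$ and is sent by $f$ to $x$.

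The main step is to check that $f$ is a partial polymorphism, via the characterisation of partial polymorphisms recalled above. Fix $\varrho\in L$, write $r:=\ar(\varrho)\le m$, and take $\ta_1,\dots,\ta_r\in D$ with $\ta_i=(a_{i,1},\dots,a_{i,m+1})$ such that every column $(a_{1,j},\dots,a_{r,j})$ belongs to $\varrho_\bA$, for $1\le j\le m+1$. Let $x_i$ be the majority value of $\ta_i$, so $x_i=f(\ta_i)$; then $\ta_i$ differs from the constant tuple $(x_i,\dots,x_i)$ in at most one coordinate, i.e.\ there is at most one index $j$ with $a_{i,j}\ne x_i$. The set of all such \emph{bad} indices, over $i=1,\dots,r$, has at most $r\le m$ elements, while there are $m+1$ columns; by the pigeonhole principle there is a column index $j_0$ with $a_{i,j_0}=x_i$ for every $i$. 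Then $(x_1,\dots,x_r)$ equals the $j_0$-th column $(a_{1,j_0},\dots,a_{r,j_0})$, which lies in $\varrho_\bA$ by assumption. Hence $(f(\ta_1),\dots,f(\ta_r))\in\varrho_\bA$, so $f$ preserves $\varrho$; as $\varrho\in L$ was arbitrary, $f$ is an $(m+1)$-ary partial polymorphism of $\bA$.

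By the hypothesis $f$ extends to a global polymorphism $\hat f\in\Pol^{(m+1)}(\bA)$. Since $\hat f$ agrees with $f$ on $D$, it satisfies $\hat f(y,x,\dots,x)=\hat f(x,y,x,\dots,x)=\dots=\hat f(x,\dots,x,y)=x$ for all $x,y\in A$; because $m+1\ge 3$, this is precisely the assertion that $\hat f$ is an $(m+1)$-ary near unanimity-function, and so $\Pol(\bA)$ contains one. The only delicate points are the well-definedness of $f$ (which is where $m\ge 2$ enters) and the pigeonhole count; note that the bound $\ar(\varrho)\le m$ together with the arity $m+1$ of the polymorphism is exactly the slack the pigeonhole argument needs, so the verification that $f$ is a partial polymorphism, while being the crux of the proof, presents no real obstacle.
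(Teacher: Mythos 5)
Your proof is correct and follows essentially the same route as the paper: you define the partial near-unanimity operation on exactly the tuples with a (unique, by $m\ge 2$) majority value, verify it is a partial polymorphism by the same pigeonhole count (your ``bad'' indices are the paper's ``distinguished'' entries, at most $\ar(\varrho)\le m$ of them against $m+1$ columns), and then extend by hypothesis. No gaps; the argument matches the paper's proof in all essentials.
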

\begin{proof}
	We define $f: A^{m+1}\prtto A$ in the following way: For all $x,y\in A$ we define
	\[ f(y,x,\ldots,x)=f(x,y,\ldots,x)=\cdots =f(x,x,\ldots,y):=x.\]
	Our goal in the following will be to show, that the thus defined partial function is in fact a partial polymorphism of the given structure $\bA$:

	Take an arbitrary $\varrho\in L$ and $\ta_1,\ldots,\ta_{\ar(\varrho)}\in\dom f$ (where $\ta_i=(a_{i,1},\ldots,a_{i,m+1})$), such that
	\[
	\begin{matrix}
		a_{1,1}            & a_{1,2}            & \cdots & a_{1,m+1}            & \in \dom f\\
		a_{2,1}            & a_{2,2}            & \cdots & a_{2,m+1}            & \in \dom f\\
		\vdots             & \vdots             & \ddots & \vdots               & \vdots\\
		a_{\ar(\varrho),1} & a_{\ar(\varrho),2} & \cdots & a_{\ar(\varrho),m+1} &  \in \dom f\\
		\rput{-90}{\in}    & \rput{-90}{\in}    & \cdots & \rput{-90}{\in}      & \\
		\varrho_\bA        & \varrho_\bA        & \cdots & \varrho_\bA          &
	\end{matrix}
	\] 
	The key is that this matrix has exactly $m+1$ columns but at most $m$ rows. Since every row is in the domain of $f$, in each row there is at most one element that occurs exactly once in this row. These elements of the matrix we shall call \emph{distinguished}. Altogether the matrix contains at most $m$ distinguished elements. Therefore there has to be at least one column that contains not a single distinguished entry. However, this column must be equal to
	\[ 
	\begin{bmatrix}
		f(a_{1,1},a_{1,2},\ldots,a_{1,m+1})\\
		f(a_{2,1},a_{2,2},\ldots,a_{2,m+1})\\
		\vdots\\
		f(a_{\ar(\varrho),1},a_{\ar(\varrho),2},\ldots,a_{\ar(\varrho),m+1})
	\end{bmatrix}
	\]
	whence $f$ is a partial polymorphism. Hence, by our assumptions on $\bA$, we conclude that $f$ can be extended to a polymorphism $g$ of $\bA$. This function $g$ is a near unanimity function.
\end{proof}
\begin{corollary}
	If $\bA$ is a finite po\-ly\-mor\-phism-ho\-mo\-ge\-ne\-ous relational structure over a finite relational signature, then $\Pol(\bA)$ contains a near unanimity function.\qed
\end{corollary}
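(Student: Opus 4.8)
The plan is to obtain this directly from Proposition~\ref{ICNU} by unwinding the hypotheses ``finite'' and ``polymorphism-homogeneous''. First I would fix an arity bound: since the signature $L$ is finite, the set $\{\ar(\varrho)\mid\varrho\in L\}$ is finite, so we may choose a natural number $m\ge 2$ that is an upper bound for all these arities (if $L$ happens to be empty, just take $m=2$). With this choice, $\bA$ is a structure of the form to which Proposition~\ref{ICNU} applies.

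Next I would note that, because $A$ is finite, the carrier $A^{m+1}$ of $\bA^{m+1}$ is finite too; hence \emph{every} partial $(m+1)$-ary polymorphism of $\bA$ automatically has finite domain, i.e.\ is a local $(m+1)$-ary polymorphism. Since $\bA$ is polymorphism-homogeneous, it is in particular $(m+1)$-polymorphism-homogeneous, so every local $(m+1)$-ary polymorphism of $\bA$ extends to a global $(m+1)$-ary polymorphism. Combining these two observations, every partial $(m+1)$-ary polymorphism of $\bA$ extends to a global polymorphism, which is exactly the hypothesis of Proposition~\ref{ICNU}. Applying that proposition produces an $(m+1)$-ary near unanimity function in $\Pol(\bA)$ (and $m+1\ge 3$, so this is a legitimate near unanimity arity), as required.

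There is essentially no obstacle here; the statement is a routine corollary of Proposition~\ref{ICNU}. The only points needing a moment's care are the passage from ``partial polymorphism'' to ``local polymorphism'' — which is where finiteness of $\bA$ is used — and the existence of the arity bound $m$ — which is where finiteness of the signature is used.
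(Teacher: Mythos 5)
Your argument is correct and is exactly the intended derivation behind the paper's unproved \qed: finiteness of the signature yields the arity bound $m\ge 2$ needed for Proposition~\ref{ICNU}, and finiteness of $A$ makes every partial $(m+1)$-ary polymorphism local, so polymorphism-homogeneity supplies the extension hypothesis of that proposition. Nothing is missing; the two points of care you flag are precisely the ones that matter.
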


The existence of a near unanimity polymorphism in a structure $\bA$ has strong consequences for the clone of polymorphisms of $\bA$. Let $\mathbb{A}$ be an algebra with basic set $A$ and with all functions from $\Pol(\bA)$ as fundamental operations. Let further $V(\mathbb{A})$ be the variety generated by $\mathbb{A}$. If $\bA$ has a near unanimity polymorphism $f$, then there exists a term $t$ in the language of $\mathbb{A}$, such that the term function defined by $t$ in any algebra from $V(\mathbb{A})$ is a near unanimity function. Thus, the Baker-Pixley theorem can be invoked for $V(\mathbb{A})$:
\begin{theorem}[Baker, Pixley \cite{BakPix75}]
	Let $V$ be a variety and $d\ge 2$ be an integer. Then the following are equivalent:
	\begin{enumerate}
		\item There is a $(d+1)$-ary term $t$ in the language of $V$ such that the term function of $t$ in any algebra from $V$ is a near unanimity function,
		\item if $\mathbb{A}$ is a subalgebra of a direct product $\mathbb{P}=\mathbb{B}_1\times\dots\times\mathbb{B}_r$ for $r\ge d$, then $\mathbb{A}$ is determined by all its projections onto $d$ coordinates of this product,
		\item if $\mathbb{A}\in V$ and if from $r$ congruences $x\equiv a_i\pmod{ \theta_i}$ all collections of $d$ congruences are solvable, then all $r$ congruences are solvable, simultaneously,
		\item if $\mathbb{A}\in V$, $n\ge 1$, and if $f:A^n\prtto A$ is a partial function with a finite domain, then $f$ extends to a term-function of $\mathbb{A}$ if and only if every restriction of $f$ to $d$ or fewer elements of its domain extends to a term function of $\mathbb{A}$,
		\item if $\mathbb{A}\in V$, $n\ge 1$, and if $f:A^n\prtto A$ is a partial function with a finite domain, then $f$ extends to a term function of $\mathbb{A}$ if and only if $f$ preserves all relations from $\Inv^{(d)}(\mathbb{A})$.  	   
	\end{enumerate}
\end{theorem}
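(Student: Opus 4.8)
The plan is to establish the five conditions as equivalent by running the cycle $(1)\Rightarrow(2)\Rightarrow(3)\Rightarrow(4)\Rightarrow(5)\Rightarrow(1)$. Only $(1)\Rightarrow(2)$ uses the near unanimity term in an essential way; each of the remaining arrows is a translation — subalgebras of finite products $\leftrightarrow$ simultaneous solvability of congruences $\leftrightarrow$ extendability of partial operations of finite domain $\leftrightarrow$ preservation of invariant relations of arity $\le d$. I expect $(1)\Rightarrow(2)$ to be the one genuinely delicate step.

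For $(1)\Rightarrow(2)$ I would fix a $(d+1)$-ary near unanimity term $t$ for $V$ (so $t$ is in particular idempotent), take $\mathbb{A}\le\mathbb{B}_1\times\dots\times\mathbb{B}_r$ with $r\ge d$, and let $\tb=(b_1,\dots,b_r)$ be a tuple each of whose projections onto $d$ coordinates lies in the corresponding projection of $\mathbb{A}$. The key claim, to be proved by induction on $|T|$, is that $e_T(\tb)\in e_T(\mathbb{A})$ for every $T\subseteq\{1,\dots,r\}$: for $|T|\le d$ this is the hypothesis, and for $|T|=k+1>d$ I would write $T=\{i_0,\dots,i_k\}$, use the inductive hypothesis to produce elements $a^{(j)}\in\mathbb{A}$ ($0\le j\le k$) with $a^{(j)}$ agreeing with $\tb$ on $T\setminus\{i_j\}$, pick $d+1$ distinct indices $j_0,\dots,j_d$ among $0,\dots,k$, and check that $t(a^{(j_0)},\dots,a^{(j_d)})\in\mathbb{A}$ agrees with $\tb$ at every coordinate $i\in T$: at most one of the chosen $a^{(j_l)}$ can disagree with $\tb$ at $i$, so near unanimity — or idempotence, if none disagrees — returns $b_i$. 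Taking $T=\{1,\dots,r\}$ gives $\tb\in\mathbb{A}$. The bookkeeping of which coordinate is spoiled by which $a^{(j)}$ is where the proof has to be handled carefully.

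The two congruence-theoretic steps are routine. For $(2)\Rightarrow(3)$ I would (after padding a short system with repetitions of one of its congruences to reach $r\ge d$) pass to the image $\mathbb{A}'$ of the diagonal homomorphism $\mathbb{A}\to\prod_{i=1}^{r}\mathbb{A}/\theta_i$ and observe that solvability of the subsystem indexed by $S$ is exactly the assertion that the projection of $(a_1/\theta_1,\dots,a_r/\theta_r)$ onto the coordinates in $S$ lies in $e_S(\mathbb{A}')$, so that $(2)$ applied to $\mathbb{A}'$ puts the whole tuple into $\mathbb{A}'$. For $(3)\Rightarrow(4)$, given $f\colon A^n\prtto A$ with domain $\{\ta_1,\dots,\ta_r\}$ and $f(\ta_j)=c_j$, all of whose restrictions to at most $d$ of the $\ta_j$ extend to term functions of $\mathbb{A}$, I would let $\mathbb{G}\le\mathbb{A}^{A^n}$ be the algebra of $n$-ary term functions of $\mathbb{A}$ (so $\mathbb{G}\in V$), put $\theta_j:=\ker(\mathrm{ev}_{\ta_j})$ for the evaluation homomorphisms $\mathrm{ev}_{\ta_j}\colon\mathbb{G}\to\mathbb{A}$, use the one-point case of the hypothesis to pick $g_j\in\mathbb{G}$ with $\mathrm{ev}_{\ta_j}(g_j)=c_j$, and note that a simultaneous solution of $x\equiv g_j\ (\theta_j)$, $j=1,\dots,r$, is precisely a term function extending $f$, while every $\le d$-element subsystem is solvable by hypothesis; then $(3)$ applies to $\mathbb{G}$.

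For $(4)\Rightarrow(5)$ I would first reduce preservation of $\Inv^{(d)}(\mathbb{A})$ to preservation of every invariant relation of arity at most $d$ (pad a relation of arity $k<d$ by repeating a coordinate), and then, for any at most $d$ points $\ta_1,\dots,\ta_k$ of $\dom f$, take $\varrho\le\mathbb{A}^k$ to be the subalgebra generated by the columns of the matrix $(a_{j,i})$; since $f$ preserves $\varrho$ and the rows of this matrix lie in $\dom f$, the tuple $(c_1,\dots,c_k)$ lands in $\varrho$, and as the elements of $\varrho$ are exactly the tuples $(p^{\mathbb{A}}(\ta_1),\dots,p^{\mathbb{A}}(\ta_k))$ with $p$ an $n$-ary term, some term function extends $f\restr\{\ta_1,\dots,\ta_k\}$; now $(4)$ yields that $f$ extends. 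For $(5)\Rightarrow(1)$ I would apply the criterion in the algebra $\mathbb{F}$ free in $V$ on two generators $x,y$ to the partial $(d+1)$-ary operation sending $p_i:=(x,\dots,x,y,x,\dots,x)$, with $y$ in the $i$-th slot, to $x$ for $1\le i\le d+1$: omitting $p_l$, the $l$-th coordinate projection already witnesses that restriction, so by $(5)$ (via the equivalence with $(4)$) the operation extends to a $(d+1)$-ary term $t$ with $t^{\mathbb{F}}(p_i)=x$ for all $i$, and since $\mathbb{F}$ is free in $V$ the near unanimity identities $t(y,x,\dots,x)\approx x,\ \dots,\ t(x,\dots,x,y)\approx x$ then hold throughout $V$. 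The converse implications in $(4)$ and $(5)$ are immediate, since a term function restricts to every subdomain and preserves every invariant relation.
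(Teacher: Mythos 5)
This theorem is not proved in the paper at all: it is quoted from Baker and Pixley \cite{BakPix75} and used as a black box in Section~\ref{sec5}, so there is no in-paper argument to compare yours against. Your outline is the standard proof of the Baker--Pixley theorem, and the essential steps check out: the induction on $|T|$ with the near unanimity term in $(1)\Rightarrow(2)$ (at each coordinate of $T$ at most one of the chosen $d+1$ witnesses can disagree with $\tb$, since the spoiled coordinates are pairwise distinct), the diagonal embedding into $\prod_i\mathbb{A}/\theta_i$ for $(2)\Rightarrow(3)$, the passage to the algebra of $n$-ary term functions with the kernels of the evaluation homomorphisms for $(3)\Rightarrow(4)$, the generated subalgebra of $\mathbb{A}^k$ (with the arity-padding remark) for $(4)\Rightarrow(5)$, and the free algebra on two generators for the return to $(1)$.

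One step should be reworded. In $(5)\Rightarrow(1)$ you verify the hypothesis of $(4)$ --- every restriction to at most $d$ of the tuples $p_1,\dots,p_{d+1}$ is extended by a coordinate projection --- and then invoke ``$(5)$ via the equivalence with $(4)$''; but inside the cycle the implication $(5)\Rightarrow(4)$ is not yet available, so as written this is circular. The repair is immediate: check the hypothesis of $(5)$ directly. If $\varrho\in\Inv^{(d)}(\mathbb{F})$ and $d$ rows are chosen from $\{p_1,\dots,p_{d+1}\}$ with all $d+1$ columns lying in $\varrho$, then some $p_l$ is absent among the rows, so the $l$-th column is the constant tuple $(x,\dots,x)$, which is exactly the required output tuple and already lies in $\varrho$. (Equivalently: ``every restriction to at most $d$ points extends to a term function'' trivially implies ``$f$ preserves $\Inv^{(d)}$'', because a term function extending the relevant restriction maps the columns into $\varrho$; only this easy hypothesis-level implication is needed, never the unproved converse of $(4)\Rightarrow(5)$.) Two cosmetic remarks: in $(2)\Rightarrow(3)$ the padding device only matters in the degenerate case $r<d$, where the hypothesis of $(3)$ as literally stated is vacuous and which is never needed in $(3)\Rightarrow(4)$ anyway; and in $(1)\Rightarrow(2)$ you tacitly use that the factors $\mathbb{B}_i$ belong to $V$, so that the near unanimity identities hold coordinatewise --- this is the intended reading of the statement.
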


\begin{theorem}
	Let $\bA$ be a finite relational structure, all of whose relations have arity $\le d$ for some $d\ge 2$. If $\bA$ is $|A|^d$-po\-ly\-mor\-phism-ho\-mo\-ge\-ne\-ous, then it is po\-ly\-mor\-phism-ho\-mo\-ge\-ne\-ous. 
\end{theorem}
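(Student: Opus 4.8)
The plan is to run the Baker--Pixley theorem together with Proposition~\ref{ICNU} and the decreasing hierarchy of $k$-polymorphism-homogeneity, interposing a column-counting trick that reduces the arity of a partial polymorphism with small domain to at most $|A|^d$.

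\emph{Setting up Baker--Pixley.} If $|A|=1$ the structure is trivially polymorphism-homogeneous, so assume $|A|\ge 2$; then $|A|^d\ge 2^d\ge d+1$. Repeatedly applying the fact that $(k+1)$-polymorphism-homogeneity implies $k$-polymorphism-homogeneity, one gets that $\bA$ is $j$-polymorphism-homogeneous for every $1\le j\le |A|^d$; in particular $\bA$ is $(d+1)$-polymorphism-homogeneous, so every partial $(d+1)$-ary polymorphism of $\bA$ extends to a global one. By Proposition~\ref{ICNU} (with $m=d$), $\Pol(\bA)$ contains a $(d+1)$-ary near unanimity function $t$. Let $\mathbb{A}$ be the algebra on $A$ whose fundamental operations are all of $\Pol(\bA)$. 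Since $\Pol(\bA)$ contains the projections and is closed under composition, the term functions of $\mathbb{A}$ are precisely the members of $\Pol(\bA)$; and since the near-unanimity identities are equations that hold for $t$ in $\mathbb{A}$, they hold for $t$ in every algebra of $V(\mathbb{A})$. Hence the Baker--Pixley theorem applies to $V(\mathbb{A})$ with the integer $d$.

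\emph{Reducing to small domains, then to small arity.} Fix $n\ge 1$ and a local $n$-ary polymorphism $f$ of $\bA$. By clause (4) of the Baker--Pixley theorem, applied to $\mathbb{A}\in V(\mathbb{A})$, $f$ extends to a term function of $\mathbb{A}$ --- that is, to a polymorphism of $\bA$ --- if and only if every restriction of $f$ to at most $d$ elements of $\dom f$ does. It therefore suffices to show: \emph{every partial $n$-ary polymorphism $g$ of $\bA$ with $|\dom g|\le d$ extends to a polymorphism of $\bA$.} Write $\dom g=\{g_1,\dots,g_s\}$ with $s\le d$ and $g_i=(g_{i,1},\dots,g_{i,n})$, and consider the columns $c_j:=(g_{1,j},\dots,g_{s,j})\in A^s$ for $1\le j\le n$. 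There are at most $|A|^s\le|A|^d$ distinct columns, so one may fix a surjection $\pi\colon\{1,\dots,n\}\to\{1,\dots,p\}$ with $p\le|A|^d$ such that $\pi(j)=\pi(j')$ implies $c_j=c_{j'}$, together with a right inverse $\sigma$ of $\pi$. Deduplicating columns along $\pi$ produces a partial $p$-ary function $g'$ with domain $\{g'_1,\dots,g'_s\}$ defined by $(g'_i)_{\pi(j)}:=g_{i,j}$ and $g'(g'_i):=g(g_i)$; one checks that these are well defined (using $c_j=c_{j'}$ and that $g$ is a function), and comparing the defining implication for each relational symbol $\varrho$ over the two matrices, which have the same columns up to the reindexing $\pi$, shows that $g'$ is a partial $p$-ary polymorphism of $\bA$. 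As $p\le|A|^d$, the structure $\bA$ is $p$-polymorphism-homogeneous, so $g'$ extends to a polymorphism $\widehat{g'}$ of $\bA$; then $\widehat{g}\colon(a_1,\dots,a_n)\mapsto\widehat{g'}(a_{\sigma(1)},\dots,a_{\sigma(p)})$, being a composite of $\widehat{g'}$ with projections, is a polymorphism of $\bA$, and $\widehat{g}(g_i)=\widehat{g'}(g'_i)=g(g_i)$, so $\widehat{g}$ extends $g$. This proves the italicized claim, hence $f$ extends to a polymorphism of $\bA$; as $n$ was arbitrary, $\bA$ is polymorphism-homogeneous.

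\emph{Expected obstacle.} Everything apart from the column-deduplication is a direct application of results already available (the decreasing hierarchy, Proposition~\ref{ICNU}, clause (4) of Baker--Pixley). The point needing genuine care is that deduplicating columns transports both ``being a partial polymorphism'' and ``extendability to a global polymorphism'' faithfully between arities $n$ and $p$; one should spell out the defining implications for each $\varrho\in L$ on the $s\times n$ and $s\times p$ matrices and verify that $\pi$ and its section $\sigma$ match them up, in both directions. A secondary point to record explicitly is the identification of the term functions of $\mathbb{A}$ with $\Pol(\bA)$ and the persistence of the near-unanimity term in $V(\mathbb{A})$, which together justify invoking the Baker--Pixley theorem at all.
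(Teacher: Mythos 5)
Your proposal is correct and follows essentially the same route as the paper: establish a $(d+1)$-ary near unanimity polymorphism via Proposition~\ref{ICNU}, invoke clause (4) of the Baker--Pixley theorem for the variety generated by the algebra of polymorphisms, and handle the restrictions to at most $d$ domain elements by deduplicating columns to reduce the arity to at most $|A|^d$, where $|A|^d$-polymorphism-homogeneity (together with the decreasing hierarchy) applies. The only cosmetic difference is that you run the column-deduplication uniformly for all arities $n$, while the paper does it only for $n>|A|^d$; both versions are fine.
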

\begin{proof}
	Without loss of generality, we may assume that the relational signature of $\bA$ is finite.
	
	If $|A|=1$, then the claim is trivially true. Therefore in the following we will assume that $A$ has at least $2$ elements. With this assumption we always have $|A|^d> d+1$. By Proposition~\ref{ICNU} we have that $\bA$ has a $(d+1)$-ary near unanimity polymorphism. 
	
	Let $f$ be an $n$-ary partial polymorphism of $\bA$ such that $n$ is larger than $|A|^d$ and such that the  domain of $f$ has $m$ elements for some $1\le m\le d$. Suppose $\dom(f)=\{(a_{i,1},\dots, a_{i,n})\mid 1\le i\le m\}$. Then the matrix
	\[
	\begin{bmatrix}
		a_{1,1} & a_{1,2}& \dots & a_{1,n}\\
		a_{2,1} & a_{2,2}& \dots & a_{2,n}\\
		\vdots & \vdots & \ddots & \vdots \\
		a_{m,1} & a_{m,2}& \dots & a_{m,n}
	\end{bmatrix}
	\]
	has at most $|A|^m$ different columns. After removing all duplicate columns we obtain a matrix	
	\[
	\begin{bmatrix}
		a_{1,j_1} & a_{1,j_2}& \dots & a_{1,j_k}\\
		a_{2,j_1} & a_{2,j_2}& \dots & a_{2,j_k}\\
		\vdots & \vdots & \ddots & \vdots \\
		a_{m,j_1} & a_{m,j_2}& \dots & a_{m,j_k}
	\end{bmatrix}
	\]
	Now, the partial function $g:A^k\prtto A$, defined by
	\[
		g(a_{i,j_1},a_{i,j_2},\dots,a_{i,j_k}):=f(a_{i,1},a_{i,2},\dots, a_{i,n}) \quad 1\le i\le m
	\]
	is a partial polymorphism of $\bA$. Since $k\le |A|^d$, and since $\bA$ is $|A|^d$-po\-ly\-mor\-phism-ho\-mo\-ge\-ne\-ous, $g$ extends to a polymorphism $\hat{g}$ of $\bA$. Now we define $\hat{f}:A^n\to A$ according to
	\[\hat{f}(x_1,\dots,x_n):=\hat{g}(x_{i_1},\dots, x_{i_k}).\]
	Clearly, $\hat{f}$ is a polymorphism of $\bA$ that extends $f$. 
	
	Hence, from the Baker-Pixley Theorem it follows that $\bA$ is po\-ly\-mor\-phism-ho\-mo\-ge\-ne\-ous.
\end{proof}

\begin{corollary}
	It is decidable whether a finite relational structure is po\-ly\-mor\-phism-ho\-mo\-ge\-ne\-ous.\qed
\end{corollary}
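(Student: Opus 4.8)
The plan is to reduce the infinitary condition defining polymorphism-homogeneity to a single, manifestly finite verification, using the theorem proved just above. Let $\bA$ be a finite relational structure with carrier $A$, say $|A|=n$. First I would arrange that the signature is finite and that the maximal arity of its relations is some $d\ge 2$: since $A$ is finite there are only finitely many distinct relations of each fixed arity, so after deleting duplicates we may take the signature to be finite; and we may always enlarge the bound to $d:=\max\{2,\max_{\varrho\in L}\ar(\varrho)\}$ without disturbing the hypothesis that all relations have arity $\le d$ (there are no relations of arity $0$, since $\ar$ takes values in $\bN\setminus\{0\}$, so no special case arises there). By the preceding theorem, $|A|^d$-polymorphism-homogeneity of $\bA$ implies polymorphism-homogeneity of $\bA$; conversely polymorphism-homogeneity entails $|A|^d$-polymorphism-homogeneity by definition. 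Hence $\bA$ is polymorphism-homogeneous if and only if it is $k$-polymorphism-homogeneous for the single value $k=|A|^d$.

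Next I would observe that $k$-polymorphism-homogeneity is decidable for each fixed $k$, because everything in sight ranges over finite sets. The set $A^k$ is finite, so a $k$-ary local polymorphism is just an arbitrary partial polymorphism $f\colon A^k\prtto A$; there are at most $(n+1)^{n^k}$ such partial maps, and for each the property of being a partial polymorphism is checked in finite time via the explicit criterion recalled in the Preliminaries: for every $\varrho\in L$ and every choice of $\ar(\varrho)$ tuples from $\dom f$, verify the displayed implication. Likewise $\Pol^{(k)}(\bA)$ is finite (at most $n^{n^k}$ elements, each tested for membership by the same finite check), so for a given partial polymorphism $f$ one can decide whether some $g\in\Pol^{(k)}(\bA)$ restricts to $f$ on $\dom f$. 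Then $\bA$ is $k$-polymorphism-homogeneous precisely when this last test succeeds for \emph{every} partial polymorphism $f\colon A^k\prtto A$, which is a finite conjunction of finite tests.

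Putting the two steps together yields the algorithm: compute $d=\max\{2,\max_{\varrho\in L}\ar(\varrho)\}$, set $k=|A|^d$, and run the finite check of $k$-polymorphism-homogeneity just described; answer \emph{yes} exactly when it succeeds. Correctness is immediate from the preceding theorem. I do not expect a genuine obstacle: all the real content sits in the preceding theorem, which bounds the arity at which a failure of polymorphism-homogeneity must already be visible; the only points that deserve a sentence of care are the preliminary reduction to a finite signature of bounded arity and the padding of $d$ to be at least $2$ so that that theorem applies.
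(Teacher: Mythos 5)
Your proposal is correct and is exactly the argument the paper intends: the corollary is stated as an immediate consequence of the preceding theorem, which reduces polymorphism-homogeneity of a finite structure (relations of arity $\le d$, with $d$ padded to be $\ge 2$) to $|A|^d$-polymorphism-homogeneity, and the latter is a finite check since for fixed $k$ there are only finitely many partial maps $A^k\prtto A$ and finitely many candidate extensions in $\Pol^{(k)}(\bA)$, each testable against the finitely many relations. Your preliminary normalizations (finite signature after removing duplicate relations, enlarging $d$ to at least $2$) match the ``without loss of generality'' step in the paper's proof of that theorem, so there is nothing to add.
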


\section{Classifying po\-ly\-mor\-phism-ho\-mo\-ge\-ne\-ous structures}\label{sec6}
Until now our knowledge about po\-ly\-mor\-phism-ho\-mo\-ge\-ne\-ous structures is mostly theoretical. In this section we are going to change this situation by giving a complete classifications of po\-ly\-mor\-phism-ho\-mo\-ge\-ne\-ous graphs, posets, and strict posets. Finally we will recall Kaarli's classification of po\-ly\-mor\-phism-ho\-mo\-ge\-ne\-ous meet-complete lattices of equivalence relations \cite{Kaa83}. Our results partially depend on  classification results of ho\-mo\-mor\-phism-ho\-mo\-ge\-ne\-ous structures by Cameron, Lockett, Ma\v{s}ulovic, and Ne\v{s}et\v{r}il \cite{CamLoc10,Mas07,CamNes06}.

\subsection*{Polymorphism-homogeneous graphs}
Ho\-mo\-mor\-phism-ho\-mo\-ge\-ne\-ous structures were first searched among graphs and posets in \cite{CamNes06}. While the characterization of ho\-mo\-mor\-phism-ho\-mo\-ge\-ne\-ous posets was meanwhile completed, to our knowledge such a characterization for ho\-mo\-mor\-phism-ho\-mo\-ge\-ne\-ous graphs  is still to be given. What we know from \cite{CamNes06} is that every countable graph that contains the Rado-graph as a spanning subgraph is ho\-mo\-mor\-phism-ho\-mo\-ge\-ne\-ous, and that the disconnected ho\-mo\-mor\-phism-ho\-mo\-ge\-ne\-ous graphs are exactly the disjoint unions of complete graphs of the same size. From \cite{RusSch10}, we know that there are countable ho\-mo\-mor\-phism-ho\-mo\-ge\-ne\-ous graphs that do not belong to the former classes. In this section we are going to give a complete classification of the countable po\-ly\-mor\-phism-ho\-mo\-ge\-ne\-ous graphs.

When we talk about graphs, we mean simple graphs. In particular for us a graph is a pair $(V,\varrho)$, where $V$ is a set of vertices and where $\varrho$ is a symmetric, irreflexive binary relation on $V$. Thus, for us, homomorphisms are not allowed to contract edges to loops.

Recall that for every $k\in\bN\setminus\{0\}$, the star graph $S_k$ is defined to be the complete bipartite graph $K_{1,k}$.
\begin{center}
	\begin{pspicture}(-1,-1)(2,1.5)
		\cnode(0,0){3pt}{a1}
		\cnode(1,0){3pt}{a2}
		\cnode(0.5,1){3pt}{a3}
		\psset{nodesep=0pt}
		\ncline{a1}{a3}
		\ncline{a2}{a3}
		\rput(0.5,-0.5){$S_2$}
	\end{pspicture}
	\begin{pspicture}(-1,-1)(2,1)
		\cnode(0,0){3pt}{a1}
		\cnode(1,0){3pt}{a2}
		\cnode(0.5,1.2){3pt}{a3}
		\cnode(0.5,0.5){3pt}{c}
		\psset{nodesep=0pt}
		\ncline{a1}{c}
		\ncline{c}{a3}
		\ncline{a2}{c}
		\rput(0.5,-0.5){$S_3$}
	\end{pspicture}
	\begin{pspicture}(-1,-1)(2,1)
		\cnode(0,0){3pt}{a1}
		\cnode(1,0){3pt}{a2}
		\cnode(0,1){3pt}{a3}
		\cnode(1,1){3pt}{a4}
		\cnode(0.5,0.5){3pt}{c}
		\psset{nodesep=0pt}
		\ncline{a1}{c}
		\ncline{c}{a4}
		\ncline{c}{a3}
		\ncline{a2}{c}
		\rput(0.5,-0.5){$S_4$}
	\end{pspicture}
\end{center}
Moreover, by $K_n$ we will denote the complete graph on $n$ vertices.
\begin{center}
	\begin{pspicture}(0,-1)(2,1.5)
		\cnode(0.5,0){3pt}{a1}
		\rput(0.5,-0.5){$K_1$}
	\end{pspicture}
	\begin{pspicture}(0,-1)(2,1.5)
		\cnode(0.5,0){3pt}{a1}
		\cnode(0.5,1){3pt}{a2}
		\psset{nodesep=0pt}
		\ncline{a1}{a2}
		\rput(0.5,-0.5){$K_2$}
	\end{pspicture}
	\begin{pspicture}(-1,-1)(2,1)
		\cnode(0,0){3pt}{a1}
		\cnode(1,0){3pt}{a2}
		\cnode(0.5,1){3pt}{a3}
		\psset{nodesep=0pt}
		\ncline{a1}{a2}
		\ncline{a1}{a3}
		\ncline{a2}{a3}
		\rput(0.5,-0.5){$K_3$}
	\end{pspicture}
	\begin{pspicture}(-1,-1)(2,1)
		\cnode(0,0){3pt}{a1}
		\cnode(1,0){3pt}{a2}
		\cnode(0,1){3pt}{a3}
		\cnode(1,1){3pt}{a4}
		\psset{nodesep=0pt}
		\ncline{a1}{a2}
		\ncline{a1}{a3}
		\ncline{a1}{a4}
		\ncline{a2}{a3}
		\ncline{a2}{a4}
		\ncline{a3}{a4}
		\rput(0.5,-0.5){$K_4$}
	\end{pspicture}
\end{center}
\begin{definition} 
	Let $\bA=(A,\varrho^\bA)$. We say that $\bA$ has property $(\star)$ if
	\[\text{For all } a,b,c\in A\text{ it holds that  } (a,b)\in\varrho^\bA, (b,c)\in\varrho^\bA \implies a=c.\tag{$\star$}\]
\end{definition}

The following Lemma is going to be the key in the classification of the connected po\-ly\-mor\-phism-ho\-mo\-ge\-ne\-ous graphs.
\begin{lemma}\label{star}
	Let $\bA=(A,\varrho^\bA)$ be a graph that does not have property $(\star)$. Then for every $k\ge 2$ there exists an $n\in\bN\setminus\{0\}$ such that $S_k$ is an induced subgraph of $\bA^n$.
\end{lemma}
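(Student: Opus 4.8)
The plan is to use the failure of $(\star)$ to produce a length-two path in $\bA$ with distinct endpoints, and then to realise $S_k$ as an induced subgraph of a sufficiently high power $\bA^n$ by taking a constant tuple as the centre of the star and, as leaves, a family of $\{0,1\}$-patterns over a two-element set of common neighbours of that centre, chosen so that any two of the patterns coincide in at least one coordinate. First I would unwind the hypothesis: since $\bA$ does not have property $(\star)$, there are $a_0,b_0,c_0\in A$ with $(a_0,b_0)\in\varrho^{\bA}$, $(b_0,c_0)\in\varrho^{\bA}$ and $a_0\neq c_0$; irreflexivity of $\varrho^{\bA}$ gives in addition $a_0\neq b_0$ and $b_0\neq c_0$, so $a_0,b_0,c_0$ are pairwise distinct and both $a_0$ and $c_0$ are $\varrho^{\bA}$-neighbours of $b_0$. (Whether or not $a_0$ and $c_0$ are themselves adjacent will not matter.)

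Next I would recall the shape of adjacency in a power: in $\bA^n$ two tuples are adjacent precisely when they are adjacent in $\bA$ in every coordinate, so in particular $\bA^n$ is again a (simple) graph, and — by irreflexivity — any two tuples that agree in some coordinate are \emph{non}-adjacent in $\bA^n$. Now fix $k\geq 2$ and let $n$ be least with $2^{n-1}\geq k$ (so $n\geq 2$). Put $z:=(b_0,\dots,b_0)\in A^n$, and choose pairwise distinct tuples $\ell_1,\dots,\ell_k\in\{a_0,c_0\}^n$ all of whose first coordinate equals $a_0$; this is possible because there are exactly $2^{n-1}\geq k$ such tuples.

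Finally I would check that the substructure of $\bA^n$ induced on $\{z,\ell_1,\dots,\ell_k\}$ is isomorphic to $S_k=K_{1,k}$. These $k+1$ tuples are pairwise distinct, since $a_0,c_0\neq b_0$ and the $\ell_i$ were chosen distinct; the centre $z$ is adjacent to each $\ell_i$ because every coordinate entry of $\ell_i$ lies in $\{a_0,c_0\}$ and both $a_0$ and $c_0$ are neighbours of $b_0$; and any two distinct leaves $\ell_i,\ell_j$ agree in their first coordinate (both equal to $a_0$), hence are non-adjacent in $\bA^n$. Thus the induced subgraph has exactly the edges $\{z\ell_i\mid 1\leq i\leq k\}$, i.e.\ it is the star $S_k$, and the required $n$ (which in fact depends only on $k$) has been produced.

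Since the construction is entirely explicit, there is no real obstacle. The one point that needs care is getting the direction of the product-adjacency equivalence right: adjacency in $\bA^n$ \emph{forces} adjacency — and in particular distinctness — in \emph{every} coordinate. That is exactly what makes ``agreeing in one coordinate'' enough to destroy an edge between two leaves while ``being a neighbour of $b_0$ in every coordinate'' keeps all the edges from the leaves to the centre.
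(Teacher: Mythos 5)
Your proof is correct and follows essentially the same construction as the paper: a constant $b$-tuple as the centre of the star, and leaves built from $\{a,c\}$-patterns that all share the value $a$ in the first coordinate, so that irreflexivity kills all edges among the leaves while coordinatewise adjacency to $b$ keeps every edge to the centre. The only difference is cosmetic: the paper takes $n=k+1$ and leaves with exactly one coordinate equal to $c$, whereas you take the smaller power with $2^{n-1}\ge k$ and use arbitrary distinct $\{a,c\}$-patterns.
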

\begin{proof}
	Since $\bA$ does not have property $(\star)$, there exist distinct  $a,b,c\in A$ such that $(a,b),(b,c)\in\varrho^\bA$. Let $k\ge 2$ and set $n:= k+1$. Define for $1\le i\le k$ tuples $\ta_i\in A^n$ according to:
	\begin{align*}
		\ta_1&=(a,c,a,a,\dots,a),\\
		\ta_2&=(a,a,c,a,\dots,a),\\
		&\quad\vdots\\
		\ta_k&=(a,a,a,\dots,a,c).
	\end{align*} 
	Clearly, the set $\{\ta_1,\dots,\ta_k\}$ is independent in $\bA^n$, since the first coordinate of each  $\ta_i$ is equal to $a$. Moreover, $\tb:=(b,b\dots,b)$ is connected to each $\ta_i$ in $\bA^n$, since $(a,b),(b,c)\in\varrho^\bA$. In other words, the set $\{\ta_1,\dots,\ta_k,\tb\}$ induces $S_k$ in $\bA^n$. 
\end{proof}

\begin{lemma}\label{center}
	Let $\bA=(A,\varrho^\bA)$ be a  po\-ly\-mor\-phism-ho\-mo\-ge\-ne\-ous graph that does not have property $(\star)$. Then every finite subset of vertices of $\bA$ has a common neighbor. 
\end{lemma}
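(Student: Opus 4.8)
The plan is to combine Lemma~\ref{star} with the trivial observation that an independent set of vertices imposes no constraints on a homomorphism. Concretely, given a finite set of vertices I will realize it as the image under a local polymorphism of the \emph{leaves} of a sufficiently large induced star inside a finite power of $\bA$; extending that local polymorphism will then send the \emph{center} of the star to a common neighbor of that set.

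I would first dispose of the degenerate cases: any vertex is a common neighbor of the empty set, and $A\neq\emptyset$ because the failure of $(\star)$ is witnessed by three vertices $a,b,c$. So let $F=\{v_1,\dots,v_m\}\subseteq A$ with $m\ge 1$, and set $k:=\max(m,2)\ge 2$. By Lemma~\ref{star} there is an $n\in\bN\setminus\{0\}$ such that $S_k$ is an induced subgraph of $\bA^n$; let $\ta_1,\dots,\ta_k$ be its leaves and $\tb$ its center. (An inspection of the proof of Lemma~\ref{star} shows that the $\ta_i$ it produces are pairwise distinct, which is all I need.)

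Now comes the main step. Since $\{\ta_1,\dots,\ta_k\}$ is independent in $\bA^n$, so is its subset $\{\ta_1,\dots,\ta_m\}$; hence the substructure $\bD\le\bA^n$ induced by $\{\ta_1,\dots,\ta_m\}$ has empty edge relation, and the assignment $\ta_i\mapsto v_i$ $(1\le i\le m)$ is vacuously a homomorphism $f\colon\bD\to\bA$, i.e.\ an $n$-ary local polymorphism of $\bA$. By polymorphism-homogeneity, $f$ extends to a polymorphism $\hat f\colon\bA^n\to\bA$. For every $i\le m$ the center $\tb$ is adjacent to $\ta_i$ in $\bA^n$, so $(\hat f(\tb),v_i)=(\hat f(\tb),\hat f(\ta_i))\in\varrho^\bA$; since $\varrho^\bA$ is irreflexive this also yields $\hat f(\tb)\neq v_i$. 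Thus $\hat f(\tb)$ is a common neighbor of $F$, which completes the argument.

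I do not expect a genuine obstacle here: everything reduces to the observation that the leaves of a star can be mapped to an arbitrary tuple of vertices by a (trivially valid) local polymorphism, after which the center is forced onto a common neighbor. The only places that call for a little care are the degenerate cases $m\le 1$ and the verification, from the proof of Lemma~\ref{star}, that the leaves $\ta_i$ are genuinely distinct so that $f$ is well defined.
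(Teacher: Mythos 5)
Your proof is correct and follows essentially the same route as the paper: both rest on Lemma~\ref{star} and on the observation that the independent leaves of the induced star can be mapped freely, so that the image of the center is forced to be a common neighbor. The only (cosmetic) difference is that you argue directly by extending an $n$-ary local polymorphism $\bA^n\prtto\bA$, whereas the paper argues by contradiction via a non-extendable local endomorphism of $\bA^n$ and Proposition~\ref{PHHH}.
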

\begin{proof}
	Suppose, there is a finite set $\{b_1,\dots,b_k\}$ of vertices of $\bA$ that has no common neighbor. By Lemma~\ref{star}, there exists some $n\in\bN\setminus\{0\}$ such that $\bA^n$ has an induced subgraph isomorphic to $S_k$.  Let $\{\ta_1,\dots,\ta_k,\tc\}$ be its vertex set such that $\tc$ is a common neighbor of the other vertices. For $1\le i\le k$, define $\tb_i\in A^n$ according to $\tb_i:=(b_i,\dots,b_i)$. Then the set $\{\tb_1,\dots,\tb_k\}$ does not have a common neighbor in $\bA^n$. However, then the mapping defined by $\ta_i\mapsto\tb_i$ ($1\le i\le k$) is a local homomorphism of $\bA^n$ that does not extend to an endomorphism. Hence $\bA^n$ is not ho\-mo\-mor\-phism-ho\-mo\-ge\-ne\-ous and thus $\bA$ is not po\-ly\-mor\-phism-ho\-mo\-ge\-ne\-ous --- contradiction. 
\end{proof}

\begin{proposition}[{Cameron, Ne\v{s}et\v{r}il \cite[Prop.2.1(a).]{CamNes06}}]\label{RadoSpan}
	A countably infinite graph $\bA$ contains the Rado graph as a spanning subgraph if and only if every finite set of vertices of $\bA$ has a common neighbor.
\end{proposition}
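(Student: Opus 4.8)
The plan is to prove the two implications separately, leaning on the classical extension-property characterisation of the Rado graph: a countable graph is isomorphic to the Rado graph if and only if for any two disjoint finite sets of vertices $U$ and $V$ there is a vertex outside $U\cup V$ that is adjacent to every vertex of $U$ and to no vertex of $V$. For the ``only if'' direction, suppose $\bA=(A,\varrho^\bA)$ has a spanning subgraph $G=(A,\sigma)$ — that is, $\sigma\subseteq\varrho^\bA$ — with $G$ isomorphic to the Rado graph, and let $S\subseteq A$ be finite. Applying the extension property of $G$ with $S$ as the ``adjacent'' part and the empty set as the ``non-adjacent'' part produces a vertex $w\in A\setminus S$ with $(w,s)\in\sigma$ for all $s\in S$; since $\sigma\subseteq\varrho^\bA$, this $w$ is a common neighbour of $S$ in $\bA$ (and $w\notin S$ in any case, by irreflexivity).

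For the ``if'' direction I would first strengthen the hypothesis to: \emph{every finite $U\subseteq A$ has infinitely many common neighbours in $\bA$}. Otherwise the set $N$ of common neighbours of some finite $U$ would be finite and, by hypothesis, non-empty; but then a common neighbour $w$ of the finite set $U\cup N$ would lie in $N$ (being adjacent to all of $U$) while at the same time being adjacent to $w\in N$ itself, contradicting irreflexivity of $\varrho^\bA$. Granting this, I would build a spanning subgraph of $\bA$ satisfying the extension property by a one-sided (forth-only) induction. Fix an enumeration $a_0,a_1,\dots$ of $A$ and an enumeration $(U_n,V_n)_{n\in\bN}$ of all pairs of disjoint finite subsets of $A$, and construct an increasing chain $G_0\subseteq G_1\subseteq\cdots$ of finite graphs with vertex sets $D_0\subseteq D_1\subseteq\cdots\subseteq A$, each edge set contained in $\varrho^\bA$, and with $G_n$ equal to the subgraph of $G_{n+1}$ induced on $D_n$ (so adjacency decisions, once made, are never revised). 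At stage $n+1$: first enlarge the vertex set to contain $a_n$ together with all of $U_n\cup V_n$, declaring every newly created pair a non-edge (permissible, since this only deletes edges of $\bA$); then, using the strengthened hypothesis, pick a common $\bA$-neighbour $w$ of $U_n$ lying outside the current (finite) vertex set, adjoin it, make $(w,u)$ an edge for every $u\in U_n$ (legitimate, as $w$ is a $\varrho^\bA$-neighbour of $u$) and make every other pair through $w$ a non-edge.

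Then $G:=\bigcup_n G_n$ has vertex set $\bigcup_n D_n=A$ and edge set contained in $\varrho^\bA$, so it is a spanning subgraph of $\bA$; and for every disjoint pair of finite sets $(U_n,V_n)$ the vertex adjoined at stage $n+1$ witnesses the corresponding extension axiom in $G$ — it lies outside $U_n\cup V_n$ because it was chosen outside a vertex set already containing $U_n\cup V_n$, it is $\sigma$-adjacent to all of $U_n$, and it is $\sigma$-non-adjacent to all of $V_n$. By the characterisation of the Rado graph recalled above, $G$ is isomorphic to it, which completes the converse. The one genuinely delicate point is the passage from ``some common neighbour exists'' to ``infinitely many common neighbours exist'': this is exactly what guarantees a \emph{fresh} witness at every stage, and without it the inductive construction would stall; everything else is routine bookkeeping.
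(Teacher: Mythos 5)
Your proof is correct. Note that the paper itself gives no argument here: Proposition~\ref{RadoSpan} is quoted from Cameron and Ne\v{s}et\v{r}il \cite[Prop.~2.1(a)]{CamNes06}, so there is no internal proof to compare against; what you have done is supply the standard argument behind the cited result. The forward direction (extension property of the Rado graph with empty ``forbidden'' set, plus $\sigma\subseteq\varrho^\bA$) is routine, and your converse is sound: the preliminary strengthening that every finite $U$ has \emph{infinitely} many common neighbours (if the set $N$ of common neighbours were finite and nonempty, a common neighbour of $U\cup N$ would lie in $N$ and be adjacent to itself, contradicting irreflexivity) is exactly what makes the forth-only construction go through, since it guarantees a witness outside the finite set of vertices whose mutual adjacencies have already been frozen. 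The bookkeeping is also right: because each $G_n$ is the subgraph of $G_{n+1}$ induced on $D_n$, the non-adjacencies between the stage-$(n+1)$ witness and $V_n$ persist in the union, so $G=\bigcup_n G_n$ is a spanning subgraph of $\bA$ with the one-point extension property, hence isomorphic to the Rado graph. You correctly flagged the only genuinely delicate point (fresh witnesses); everything else is as routine as you say.
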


\begin{lemma}\label{smallcomponents}
	Let $\bA=(A,\varrho^\bA)$ be a connected graph with at least three vertices. Then $\bA$ does not have property $(\star)$. 
\end{lemma}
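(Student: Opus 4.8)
The plan is to prove the statement by showing that property $(\star)$ forces the carrier $A$ to have at most two elements; hence a connected graph on at least three vertices cannot satisfy $(\star)$.

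The first step is to reformulate $(\star)$ graph-theoretically. I claim that $\bA=(A,\varrho^\bA)$ has property $(\star)$ if and only if every vertex of $\bA$ has at most one neighbour. Indeed, if some vertex $b$ had two distinct neighbours $a$ and $c$, then $(a,b)\in\varrho^\bA$ and $(b,c)\in\varrho^\bA$ while $a\neq c$, so $(\star)$ fails; conversely, if every vertex has at most one neighbour, then the only way to have both $(a,b)\in\varrho^\bA$ and $(b,c)\in\varrho^\bA$ is $a=c$, so $(\star)$ holds.

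The second step uses this reformulation to analyse connectedness. Assume, for contradiction, that $\bA$ has property $(\star)$, so every vertex has degree at most one. If $\varrho^\bA=\emptyset$, then any two distinct vertices already lie in distinct connected components, and since $|A|\geq 3$ the graph $\bA$ is disconnected. If instead there is an edge $\{a,b\}\in\varrho^\bA$, then since $a$ has at most one neighbour its only neighbour is $b$, and likewise $b$'s only neighbour is $a$; hence $\{a,b\}$ is already an entire connected component of $\bA$. As $|A|\geq 3$, there is a vertex lying outside $\{a,b\}$, so $\bA$ is again disconnected. In either case $\bA$ is not connected, contradicting the hypothesis; therefore $\bA$ does not have property $(\star)$.

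I do not anticipate a genuine obstacle here: the argument reduces to the elementary observation that $(\star)$ is equivalent to having maximum degree at most $1$, and that a graph of maximum degree at most $1$ is a disjoint union of single edges and isolated vertices, so it is connected only when it has at most two vertices. (One could equally argue along a shortest path joining two of the three given vertices — its middle vertex would have two distinct neighbours — but the component argument above is the cleanest route.)
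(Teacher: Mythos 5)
Your proof is correct. It does, however, run in the opposite direction from the paper's argument: the paper proves the lemma directly, by exhibiting a vertex with two distinct neighbours --- if every two distinct vertices are adjacent, any three vertices $a,b,c$ witness the failure of $(\star)$; otherwise connectivity yields two vertices $a,c$ at distance $2$, and the middle vertex $b$ of a shortest path does the job (this is the ``shortest path'' variant you mention in passing). You instead argue the contrapositive, first observing that $(\star)$ is exactly the condition that every vertex has at most one neighbour, and then that a graph of maximum degree at most $1$ decomposes into isolated vertices and single edges, so it cannot be connected on three or more vertices. Both arguments are equally elementary and complete; yours has the small added benefit of making explicit the structural description of $(\star)$-graphs as disjoint unions of copies of $K_1$ and $K_2$, which is essentially the fact the paper extracts afterwards in Lemma~\ref{trivialcase}, whereas the paper's direct case split (complete versus distance-$2$ pair) is marginally shorter for the lemma as stated.
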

\begin{proof}
	If any two distinct vertices have distance $1$ then $\bA$ is a complete graph. In this case  for any three distinct vertices $a,b,c$ in $\bA$ we have $(a,b)\in\varrho^\bA$ and $(b,c)\in\varrho^\bA$. 
	
	So suppose that $\bA$ is not complete. Then there exist two vertices $a,c$ of distance $2$ in $\bA$. That is, there exists a $b\in A$ such that $(a,b)\in\varrho^\bA$ and $(b,c)\in\varrho^\bA$. 
\end{proof}

\begin{lemma}\label{trivialcase}
	Let $\bA=(A,\varrho)$ be a polymorphism-homogeneous graph that has property $(\star)$. Then either $\varrho^\bA=\emptyset$ or every connected component of $\bA$ is isomorphic to the complete graph $K_2$.
\end{lemma}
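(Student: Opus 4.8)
The plan is to first extract the purely structural consequence of property $(\star)$, and then play it off against homomorphism-homogeneity, which polymorphism-homogeneity implies. The key observation is that $(\star)$ forces every vertex of $\bA$ to have degree at most one: if some vertex $b$ had two \emph{distinct} neighbours $a$ and $c$, then $(a,b)\in\varrho^\bA$ and $(b,c)\in\varrho^\bA$, so $(\star)$ would yield $a=c$, a contradiction. Hence $\bA$ is a disjoint union of copies of $K_1$ and $K_2$; in particular every connected component of $\bA$ is isomorphic either to $K_1$ or to $K_2$, and if $\varrho^\bA=\emptyset$ we are already in the first alternative of the statement.

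It remains to treat the case $\varrho^\bA\neq\emptyset$ and to rule out isolated vertices. So suppose, towards a contradiction, that $\bA$ contains an edge, say on vertices $x,y$, and also an isolated vertex $v$. Consider the one-element induced substructure on $\{x\}$ (which carries no edge) and the homomorphism $h$ from it into $\bA$ given by $h(x)=v$; this is trivially a homomorphism between finite substructures. Since $\bA$ is polymorphism-homogeneous it is in particular homomorphism-homogeneous (i.e.\ $1$-polymorphism-homogeneous), so $h$ extends to an endomorphism $\hat{h}$ of $\bA$. But $(x,y)\in\varrho^\bA$ then forces $(v,\hat{h}(y))=(\hat{h}(x),\hat{h}(y))\in\varrho^\bA$, i.e.\ $v$ has a neighbour --- contradicting that $v$ is isolated.

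Therefore, when $\varrho^\bA\neq\emptyset$ the graph $\bA$ has no isolated vertices, and together with the first step this shows that every connected component is isomorphic to $K_2$, completing the proof. I do not expect a genuine obstacle here: the only points needing care are the degenerate cases (a graph with zero or one vertex, which both fall under $\varrho^\bA=\emptyset$), and the observation that homomorphism-homogeneity already suffices for the extension argument in the second step, so neither Proposition~\ref{PHHH} nor a passage to powers is required.
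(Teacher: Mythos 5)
Your proof is correct and follows essentially the same route as the paper: first deduce from $(\star)$ that every component is $K_1$ or $K_2$ (you argue the degree bound directly, which is just the content of the paper's Lemma on connected graphs with at least three vertices), then use homomorphism-homogeneity to exclude the coexistence of an isolated vertex with an edge via a one-point local homomorphism that cannot extend. No gaps; the extension argument is the same as the paper's, only with the roles of the two components described in the opposite direction.
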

\begin{proof}
	By Lemma~\ref{smallcomponents} we have that the connected components of $\bA$ are either isomorphic to $K_1$ or to $K_2$.  
	
	Suppose $\bA$ has a connected component isomorphic to $K_1$ and one isomorphic to $K_2$. Let $\{a\}$ and $\{b,c\}$ be the vertex sets of connected components isomorphic to $K_1$ and $K_2$, respectively. Then the mapping $f:b\mapsto a$ is a local homomorphism of $\bA$. Any endomorphism of $\bA$ that extends $f$ would have to map $b$ to a neighbor of $a$. Since $a$ has no neighbors in $\bA$, $f$ can not be extended to an endomorphism. In particular, $\bA$ can not be polymorphism-homogeneous.
	
	It follows that in $\bA$ either all connected components are isomorphic to $K_1$ (in this case we have $\varrho^\bA=\emptyset$)  or each connected component of $\bA$ is isomorphism to $K_2$. 
\end{proof}

\begin{lemma}\label{k2k2}
	Let $\bA$ be a graph such that every connected component of $\bA$ is isomorphic to the complete graph $K_2$. Then for every $k\in\bN\setminus\{0\}$ every connected component of the graph $\bA^k$ is isomorphic to $K_2$, too. 
\end{lemma}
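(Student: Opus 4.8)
The plan is to prove the stronger statement that \emph{every} vertex of $\bA^k$ has exactly one neighbour; once this is established, the assertion that each connected component of $\bA^k$ is isomorphic to $K_2$ is immediate. The starting observation is that, since every connected component of $\bA$ is isomorphic to $K_2$, each vertex $a\in A$ has a unique neighbour in $\bA$, which I will denote by $a'$; moreover $a'\neq a$ because $\varrho^\bA$ is irreflexive, and $(a')'=a$. In other words $\varrho^\bA=\{(a,a')\mid a\in A\}$.

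Next I would simply unfold the definition of the direct power. For $\ta=(a_1,\dots,a_k)$ and $\tb=(b_1,\dots,b_k)$ in $A^k$, the pair $(\ta,\tb)$ is an edge of $\bA^k$ precisely when $(a_i,b_i)\in\varrho^\bA$ for every coordinate $i\in\{1,\dots,k\}$. Writing $\ta':=(a_1',\dots,a_k')$, one sees at once that $(\ta,\ta')$ is an edge of $\bA^k$ and that $\ta'\neq\ta$. Conversely, if $(\ta,\tb)$ is an edge of $\bA^k$, then each $b_i$ is a neighbour of $a_i$ in $\bA$ and hence $b_i=a_i'$, so $\tb=\ta'$. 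Thus $\ta'$ is the unique neighbour of $\ta$ in $\bA^k$, and the connected component of $\ta$ is exactly $\{\ta,\ta'\}$, which induces $K_2$.

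I do not expect a genuine obstacle: the whole argument is a short direct computation with the definition of the product. The only points that need a word of care are, first, that $\bA^k$ is still a graph in the sense used in this paper --- symmetry of its edge relation is inherited coordinatewise from $\varrho^\bA$, and irreflexivity holds because $(\ta,\ta)$ being an edge would force $(a_i,a_i)\in\varrho^\bA$, contradicting irreflexivity of $\varrho^\bA$ --- and, second, the elementary fact that a (nonempty) connected graph all of whose vertices have degree $1$ is isomorphic to $K_2$: picking any vertex $\ta$, its unique neighbour $\ta'$ already has $\ta$ as its only neighbour, so connectedness leaves no room for a third vertex. Applying this to each component of $\bA^k$ completes the proof.
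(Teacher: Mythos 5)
Your proof is correct, and the underlying computations are exactly the coordinatewise ones the paper uses; the difference is only in the packaging. You prove directly that every vertex $\ta$ of $\bA^k$ has the unique neighbour $\ta'$ obtained by taking the unique $K_2$-partner in each coordinate, which simultaneously rules out isolated vertices and components with more than two vertices. The paper instead splits the argument: it first checks that $\bA^k$ has property $(\star)$ (from $(a_i,b_i),(b_i,c_i)\in\varrho^\bA$ and the $K_2$-component hypothesis it deduces $a_i=c_i$, hence $\ta=\tc$), then invokes Lemma~\ref{smallcomponents} to conclude every component of $\bA^k$ is $K_1$ or $K_2$, and finally excludes $K_1$ components by exhibiting the same coordinatewise partner $\tb$ that you call $\ta'$. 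Your version is self-contained and marginally sharper (explicit unique-neighbour statement), while the paper's buys brevity by reusing its already established lemma on property $(\star)$; both are sound, and your remarks that $\bA^k$ is again symmetric and irreflexive, and that a connected graph in which every vertex has degree one is $K_2$, close the small gaps the paper leaves implicit.
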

\begin{proof}
	Let $\ta,\tb,\tc$ be elements of $A^k$ such that $(\ta,\tb)\in\varrho^{\bA^k}$, and $(\tb,\tc)\in\varrho^{\bA^k}$. Suppose, $\ta=(a_1,\dots,a_k)$, $\tb=(b_1,\dots,b_k)$, and $\tc=(c_1,\dots,c_k)$. Since $(a_i,b_i)\in\varrho^\bA$, $(b_i,c_i)\in\varrho^\bA$, and since each connected component of $\bA$ is isomorphic to $K_2$, it follows that $a_i=c_i$, for all $i\in\{1,\dots,k\}$. In other words, we have $\ta=\tc$. Hence, $\bA^k$ has property $(\star)$ and  by Lemma~\ref{smallcomponents}, every connected component of $\bA^k$ is either isomorphic to $K_1$ or to $K_2$. 
	
	It remains to show that $\bA^k$ does not contain connected components isomorphic to $K_1$. 
	For this, let $\ta=(a_1,\dots,a_k)\in A^k$. Then for every $i\in\{1,\dots,k\}$ there is a unique vertex $b_i$ such that $\{a_i,b_i\}$ induce $K_2$ in $\bA$. Let $\tb:=(b_1,\dots,b_k)$. Then we have $(\ta,\tb)\in\varrho^{\bA^k}$. Thus $\ta$ lies in a connected component isomorphic to $K_2$ in $\bA^k$. 	
\end{proof}

\begin{theorem}
	A countable graph $\bA=(A,\varrho)$ is po\-ly\-mor\-phism-ho\-mo\-ge\-ne\-ous if and only if 
	\begin{enumerate}
		\item $\varrho^\bA=\emptyset$, or
		\item every connected component of $\bA$ is isomorphic to $K_2$, or
		\item $\bA$ has the Rado graph as a spanning subgraph.
	\end{enumerate}
\end{theorem}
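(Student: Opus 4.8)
The plan is to prove both implications, invoking Proposition~\ref{PHHH} throughout so that polymorphism-homogeneity of $\bA$ becomes homomorphism-homogeneity of every finite power $\bA^n$.

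For the ``only if'' direction I would split on property $(\star)$. If $\bA$ has property $(\star)$, then Lemma~\ref{trivialcase} immediately yields alternative~(1) or alternative~(2). If $\bA$ fails $(\star)$, then Lemma~\ref{center} shows that every finite set of vertices of $\bA$ has a common neighbour; the failure of $(\star)$ forces $|A|\ge 3$, and a finite graph cannot have a vertex adjacent to all of its vertices (that vertex would be looped), so $\bA$ must be countably infinite, and Proposition~\ref{RadoSpan} then gives alternative~(3).

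For the ``if'' direction I would treat the three cases. If $\varrho^\bA=\emptyset$, then $\varrho^{\bA^n}=\emptyset$ for all $n$, so every self-map of $A^n$ is an endomorphism and $\bA^n$ is homomorphism-homogeneous trivially. If every connected component of $\bA$ is isomorphic to $K_2$, then by Lemma~\ref{k2k2} the same holds for each $\bA^n$, so it suffices to check that a disjoint union $\bB$ of copies of $K_2$ is homomorphism-homogeneous: given a local homomorphism $f$, each component of $\dom f$ is a single vertex or an edge, an edge is necessarily sent to an edge of $\bB$, an isolated vertex of $\dom f$ may go anywhere, and one extends $f$ by mapping each $K_2$-component of $\bB$ that meets $\dom f$ isomorphically onto the component carrying the relevant $f$-image(s) and mapping every untouched component identically; the only thing to verify is that two vertices of $\dom f$ lying in one component of $\bB$ are adjacent there, hence already form an edge of $\dom f$, so the prescriptions never clash and the resulting $\hat f$ is a homomorphism extending $f$.

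The substantive case is~(3). If $\bA$ has the Rado graph as a spanning subgraph, then by Proposition~\ref{RadoSpan} every finite set of vertices of $\bA$ has a common neighbour, and choosing common neighbours coordinate by coordinate shows the same for each (still countable) power $\bA^n$. It then remains to prove that any countable graph $\bB$ in which every finite vertex set has a common neighbour is homomorphism-homogeneous, which I would do by a one-sided greedy extension: given a local homomorphism $f$, enumerate the vertices of $\bB$ not lying in the domain of $f$ as $w_1,w_2,\dots$ and, having defined $\hat f$ on the domain of $f$ together with $w_1,\dots,w_{k-1}$, let $z$ be a common neighbour in $\bB$ of the finitely many $\hat f$-images of already-treated vertices adjacent to $w_k$, and set $\hat f(w_k):=z$; every edge of $\bB$ gets checked at the step at which its later endpoint is processed, so $\hat f$ is an endomorphism extending $f$. (Alternatively, once the spanning property has been lifted to $\bA^n$, one may cite the Cameron--Ne\v{s}et\v{r}il fact that a countable graph with the Rado graph as a spanning subgraph is homomorphism-homogeneous.) I expect the obstacle to be organisational rather than deep: in case~(2) one must make the component-by-component extension manifestly well defined even though $\hat f$ need not be injective, and in case~(3) the key realisation is precisely that the common-neighbour property is inherited by finite powers, after which the single greedy argument finishes.
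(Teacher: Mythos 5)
Your proposal is correct and follows essentially the same route as the paper: split the forward direction on property $(\star)$ using Lemma~\ref{trivialcase}, Lemma~\ref{center} and Proposition~\ref{RadoSpan}, and for the converse lift each of the three cases to the finite powers $\bA^n$ (via Lemma~\ref{k2k2} and the coordinate-wise common-neighbour argument) and conclude with Proposition~\ref{PHHH}. The only differences are cosmetic: where the paper cites the Cameron--Ne\v{s}et\v{r}il facts that unions of copies of $K_2$ and graphs containing the Rado graph as a spanning subgraph are ho\-mo\-mor\-phism-ho\-mo\-ge\-ne\-ous, you supply short direct extension arguments, and you additionally make explicit why the failure of $(\star)$ rules out finite graphs --- both are fine.
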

\begin{proof}
	 ``$\Rightarrow$'' 	If $\bA$ does not have property $(\star)$, then we have by Lemma~\ref{center} and by Proposition~\ref{RadoSpan} that $\bA$ contains the Rado graph as a spanning subgraph.
	
	If $\bA$ has property $(\star)$, then we conclude from Lemma~\ref{trivialcase} that either $\varrho^\bA$ is empty or that every connected component of $\bA$ is isomorphic to $K_2$. 
	
	``$\Leftarrow$'' Suppose, that $\varrho^\bA$ is empty. Then we also have $\varrho^{\bA^k}=\emptyset$, for every $k\in\bN\setminus\{0\}$. In other words, we have for every $k\in\bN\setminus\{0\}$ that every connected component of $\bA^k$ is isomorphic to the complete graph $K_1$. Thus $\bA^k$ is homomorphism homogeneous, and consequently, by Proposition~\ref{PHHH}, $\bA$ is polymorphism-homogeneous.
	
	Suppose that every connected component of $\bA$ is isomorphic to the complete graph $K_2$. Then, by Lemma~\ref{k2k2}, we have for every $k\in\bN\setminus\{0\}$ that all the connected components of the graph $\bA^k$ are isomorphic to $K_2$. Thus, $\bA^k$ is homomorphism homogeneous for every $k\in\bN\setminus\{0\}$. By Proposition~\ref{PHHH}, we have that $\bA$ is polymorphism-homogeneous.    
		
	Suppose now that $\bA$ has the Rado graph as a spanning subgraph. By Proposition~\ref{RadoSpan},  every finite set of vertices of $\bA$ has a common neighbor. Let $k\in\bN\setminus\{0\}$, and let $\{\ta_1,\dots,\ta_n\}$ be a set of vertices of $\bA^k$. For $1\le i\le n$, suppose, $\ta_i=(a_{i,1},\dots,a_{i,k})$. For $j\in\{1,\dots,k\}$, let $c_j$ be a common neighbor of $\{a_{1,j},\dots, a_{n,j}\}$, and set $\tc:=(c_1,\dots,c_k)$. Then $\tc$ is a common neighbor of $\{\ta_1,\dots,\ta_n\}$. Hence, by Proposition~\ref{RadoSpan}, $\bA^k$  has the Rado graph as a spanning subgraph. In particular, $\bA^k$ is ho\-mo\-mor\-phism-ho\-mo\-ge\-ne\-ous. From Proposition~\ref{PHHH} it follows that $\bA$ is po\-ly\-mor\-phism-ho\-mo\-ge\-ne\-ous.  
\end{proof}

\subsection*{Polymorphism-homogeneous posets}
A poset is a relational structure $\bA=(A,\le)$ where $\le$ is a binary reflexive, antisymmetric transitive relation. $\bA$ is called \emph{trivial} if $a\le b$ implies $a=b$ --- in other words, it is an anti-chain. 
For $X,Y\subseteq A$ and $x,y\in A$ we write
\begin{align*}
	x\le Y \quad& \text{if}\quad\forall y\in Y\, (x\le y),\\
	X\le y \quad&\text{if}\quad\forall x\in X\, (x\le y),\\
	X\le Y \quad&\text{if}\quad\forall x\in X\,\forall y\in Y\, (x\le y).
\end{align*}

We distinguish the following posets:
\begin{center}
	\begin{pspicture}(-1,-1)(2,1.5)
		\cnode(0,0){3pt}{a1}
		\cnode(1,0){3pt}{a2}
		\cnode(0,1){3pt}{a3}
		\cnode(1,1){3pt}{a4}
		\psset{nodesep=0pt}
		\ncline{a1}{a3}
		\ncline{a1}{a4}
		\ncline{a2}{a3}
		\ncline{a2}{a4}
		\rput(0.5,-0.5){the bow-tie}
	\end{pspicture}
	\begin{pspicture}(-1,-1)(2,1)
		\cnode(0,0){3pt}{a1}
		\cnode(1,0){3pt}{a2}
		\cnode(0,1){3pt}{a3}
		\cnode(1,1){3pt}{a4}
		\cnode(0.5,0.5){3pt}{c}
		\psset{nodesep=0pt}
		\ncline{a1}{c}
		\ncline{c}{a4}
		\ncline{c}{a3}
		\ncline{a2}{c}
		\rput(0.5,-0.5){$X_5$}
	\end{pspicture}
\end{center}

\begin{definition}
	Let $\bA=(A,\le)$ be a poset. Then $\bA$ is called \emph{locally bounded} if for every finite subset $B$ of $A$ there exist $c,d\in A$ such that $c\le B\le d$. 
\end{definition}

\begin{definition}
	Let $\bA=(A,\le)$ be a poset. Then $\bA$ is called \emph{$X_5$-dense} if for all  $a_1,a_2,a_3,a_4\in A_4$  with $\{a_1,a_2\}\le\{a_3,a_4\}$, there exists a $c\in A$ such that $\{a_1,a_2\}\le c\le \{a_3,a_4\}$.
\end{definition}

The ho\-mo\-mor\-phism-ho\-mo\-ge\-ne\-ous posets were completely characterized by Ma\v sulovi\'c \cite{Mas07} and, independently, by Cameron and Lockett \cite{CamLoc10}:
\begin{theorem}[{Ma\v{s}ulovi\'c \cite[Thm.4.5]{Mas07}}]\label{posetHH}
	A poset $\bA$ is ho\-mo\-mor\-phism-ho\-mo\-ge\-ne\-ous if and only if
	\begin{enumerate}
		\item every connected component of $\bA$ is a chain, or
		\item $\bA$ is a tree, or
		\item $\bA$ is a dual tree, or
		\item $\bA$ is locally bounded and $X_5\notin\Age(\bA)$, or
		\item $\bA$ is locally bounded and $X_5$-dense.
	\end{enumerate}
\end{theorem}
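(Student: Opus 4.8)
The plan is to work with the \emph{one-point extension property}: $\bA$ is homomorphism-homogeneous if (and, trivially, only if) every homomorphism $f\colon\bB\to\bA$ between finite sub-posets, together with a single extra point $e$ of $\bA$, extends to the finite sub-poset $\bB\cup\{e\}$; the passage from these local extensions to a genuine endomorphism is then carried out by defining $\hat f$ directly on all of $\bA$ (for posets of arbitrary cardinality one cannot simply iterate or invoke Zorn's lemma, since a homomorphism with an infinite domain need not admit a one-point extension, e.g.\ into an incomplete chain). For the one-point step put $L=f(\{b\in B: b\le e\})$ and $U=f(\{b\in B: b\ge e\})$; since $b\le e\le b'$ forces $b\le b'$ one has $L\le U$, and extending $f$ to $e$ amounts to finding $c$ with $L\le c\le U$.

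For the implication ``(1)--(5) $\Rightarrow$ HH'' I would treat the classes separately. If every component of $\bA$ is a chain, then $\bB$ splits into finite chains, each inside one component; extend each chain-map by a step function and take the identity on the remaining components. If $\bA$ is a tree then $\{b\in B:b\le e\}$, being a subset of the chain $\downarrow e$, has a largest element $b^{*}$, and one may set $\hat f(e):=f(b^{*})$ (and $\hat f(e):=$ a fixed common lower bound of $f(B)$ — which exists because a connected poset with chain-shaped down-sets is downward directed, by induction along fences — when that set is empty); this choice is easily checked to be order-preserving and to extend $f$. Dual trees are dual. If $\bA$ is locally bounded and $X_{5}$-free, then whenever $L$ and $U$ are both nonempty, $L$ has a maximum or $U$ has a minimum: otherwise $L$ contains incomparable $\ell_{1},\ell_{2}$ and $U$ incomparable $u_{1},u_{2}$, which lift along $f$ to elements $b_{1},b_{2},b_{3},b_{4}\in B$ that, together with $e$, induce a copy of $X_{5}$ in $\bA$ --- impossible; so $c$ can be taken in $L\cup U$ (or supplied by local boundedness when one side is empty). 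If $\bA$ is locally bounded and $X_{5}$-dense, a double induction --- on $|U|$, and inside it on $|L|$, each step replacing a pair from one side by an interpolant furnished by $X_{5}$-density, with local boundedness for the base cases --- produces $c$ with $L\le c\le U$.

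For the converse, let $\bA$ be homomorphism-homogeneous. If $\bA$ is an antichain, (1) holds. If $\bA$ is disconnected and some component $K$ is not a chain, then $K$, being connected, contains an element with two incomparable elements below it or one with two incomparable elements above it; mapping such an incomparable pair onto one point of $K$ and one point of a different component gives a homomorphism whose extension would need a common bound of two elements from distinct components --- impossible --- so $\bA$ is not homomorphism-homogeneous. Hence either every component is a chain (case (1)) or $\bA$ is connected. In the connected case, if every down-set is a chain then $\bA$ is a tree (2), and if every up-set is a chain then $\bA$ is a dual tree (3). Otherwise $\bA$ contains both patterns ``two incomparable elements below a common point'' and ``two incomparable elements above a common point''; mapping the lower incomparable pair onto any two given elements $a,b$ and invoking HH sends the common point to an upper bound of $\{a,b\}$, and iterating shows $\bA$ is upward directed, dually downward directed, hence locally bounded. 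If $X_{5}\notin\Age(\bA)$ we are in case (4). If $X_{5}\in\Age(\bA)$, then given $a_{1},a_{2},a_{3},a_{4}$ with $\{a_{1},a_{2}\}\le\{a_{3},a_{4}\}$, map the two minimal and two maximal points of a fixed copy of $X_{5}$ onto $a_{1},a_{2},a_{3},a_{4}$ (this is a homomorphism, since the induced bow-tie imposes only the comparabilities that already hold) and extend by HH; the image of the centre is a point $c$ with $\{a_{1},a_{2}\}\le c\le\{a_{3},a_{4}\}$, so $\bA$ is $X_{5}$-dense, case (5).

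The two places that need genuine care are, first, the global assembly in the ``$\Rightarrow$ HH'' direction: the pointwise interpolants $\hat f(e)$ must be chosen uniformly enough that $\hat f$ comes out order-preserving on all of $\bA$, and this is precisely where the configuration analysis (the forbidden $X_{5}$ in case (4), the chain structure of down-sets in a tree, the iterated $X_{5}$-density in case (5)) is consumed --- the one-point extensions do not glue automatically in the uncountable case. Second, one must check that the case distinction in the converse is exhaustive and that local boundedness really does follow from homomorphism-homogeneity together with the presence of both branching patterns; I expect this to be the main obstacle, but once the device ``map an antichain where you want a bound and follow the apex'' is in hand it goes through, and the classification of homomorphism-homogeneous graphs/posets is not needed here --- only these local extension arguments.
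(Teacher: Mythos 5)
A preliminary remark: the paper offers no proof of this statement --- it is imported verbatim from Ma\v{s}ulovi\'c \cite{Mas07} (and, independently, Cameron--Lockett \cite{CamLoc10}) --- so your argument can only be judged on its own terms, not against an in-paper proof. Judged so, the necessity half of your proposal is essentially right: the reduction of the disconnected case to ``all components are chains'', the tree/dual-tree dichotomy, the derivation of local boundedness from the presence of both branching patterns, and the extraction of $X_5$-density from the image of the centre of a fixed copy of $X_5$ all work. The explicit global constructions for classes (1)--(3) (step functions on chains; $\hat f(e):=f(\max\{b\in B:b\le e\})$ with a default lower bound on trees) are also sound, as are the two one-point-extension analyses for classes (4) and (5) (the max/min dichotomy forced by $X_5$-freeness, and the double induction from $X_5$-density plus local boundedness).

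The genuine gap is exactly the point you defer: for classes (4) and (5) you prove only that finite partial homomorphisms admit one-point extensions. That closes the argument for countable $\bA$ (iterate along an enumeration), but the theorem carries no countability hypothesis, and the paper needs it at arbitrary cardinality (its classification of polymorphism-homogeneous posets is explicitly cardinality-independent and is obtained by applying this theorem to powers $\bA^k$). You assert that the same configuration analysis will make the pointwise interpolants ``uniform enough'', but no mechanism is given, and it is not automatic: the natural canonical rules fail. Concretely, take the poset with elements $d<e<e'<u_1,u_2$, $e<u_0<u_1$, where $u_0\parallel e'$, $u_0\parallel u_2$, $u_1\parallel u_2$, plus a greatest element; it is locally bounded and $X_5$-free. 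With $B=\{d,u_0,u_1,u_2\}$ and $f(d)=e$, $f(u_0)=e'$, $f(u_1)=u_1$, $f(u_2)=u_2$, the rule ``send $e$ to $\min f(\{b\in B:b\ge e\})$ when that minimum exists, otherwise to $\max f(\{b\in B:b\le e\})$'' assigns $e'$ to $e$ and $e$ to $e'$, so it is not order-preserving, and the dual rule fails in the dual example; a correct choice must mix the two modes coherently. One repair is to look for $\hat f$ factoring through the finite poset of types $e\mapsto(\{b\in B:b\le e\},\{b\in B:b\ge e\})$, reducing the global problem to a finite monotone selection problem; your interpolation lemma then settles class (5) at arbitrary cardinality, but class (4) still needs a separate argument, since there arbitrary finite sets $L\le U$ need not have an interpolant (bow-ties have no midpoints). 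As written, the sufficiency of classes (4) and (5) is established only for countable posets, and supplying the missing uniform-choice argument is the real content of that half of the theorem.
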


\begin{lemma}\label{X5dense}
	Let $\bA=(A,\le)$ be a poset and let $k\in\bN\setminus\{0\}$. Then $\bA$ is $X_5$-dense if and only if $\bA^k$ is $X_5$-dense.
\end{lemma}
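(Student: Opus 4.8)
The plan is to prove both implications by straightforward coordinate-wise arguments, using two elementary facts about finite powers: the order on $\bA^k$ is defined coordinate-wise, and the diagonal map $a\mapsto(a,\dots,a)$ is an embedding of $\bA$ into $\bA^k$. No deeper machinery is needed.

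First I would treat the direction ``$\bA^k$ is $X_5$-dense $\Rightarrow$ $\bA$ is $X_5$-dense''. Given $a_1,a_2,a_3,a_4\in A$ with $\{a_1,a_2\}\le\{a_3,a_4\}$, pass to the diagonal tuples $\ta_i:=(a_i,\dots,a_i)\in A^k$; since $\le$ on $\bA^k$ is checked coordinate-wise, $\{\ta_1,\ta_2\}\le\{\ta_3,\ta_4\}$ holds in $\bA^k$. Applying the $X_5$-density of $\bA^k$ yields a tuple $\tc=(c_1,\dots,c_k)$ with $\{\ta_1,\ta_2\}\le\tc\le\{\ta_3,\ta_4\}$, and projecting onto the first coordinate gives $c_1\in A$ with $\{a_1,a_2\}\le c_1\le\{a_3,a_4\}$, which is exactly the required witness.

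For the converse, given $\ta_1,\ta_2,\ta_3,\ta_4\in A^k$ with $\{\ta_1,\ta_2\}\le\{\ta_3,\ta_4\}$, write $\ta_i=(a_{i,1},\dots,a_{i,k})$. In each coordinate $j\in\{1,\dots,k\}$ the comparabilities $\{a_{1,j},a_{2,j}\}\le\{a_{3,j},a_{4,j}\}$ hold in $\bA$, so $X_5$-density of $\bA$ produces some $c_j\in A$ with $\{a_{1,j},a_{2,j}\}\le c_j\le\{a_{3,j},a_{4,j}\}$; then $\tc:=(c_1,\dots,c_k)$ witnesses $X_5$-density of $\bA^k$ for the original quadruple, again because $\le$ on $\bA^k$ is coordinate-wise. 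I expect no real obstacle here: the only point to keep in mind is that the defining condition of $X_5$-density imposes no distinctness on $a_1,\dots,a_4$ nor on the witness $c$, so there are no degenerate cases to treat separately, and both directions reduce immediately to the coordinate-wise description of the product order.
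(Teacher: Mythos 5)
Your proof is correct and coincides with the paper's own argument: the direction from $\bA$ to $\bA^k$ is handled coordinate-wise, and the converse uses the diagonal tuples $(a_i,\dots,a_i)$ and projection onto one coordinate, exactly as in the paper. No issues.
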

\begin{proof}
	``$\Rightarrow$''
	Let $k\ge 1$, and let $\ta_1,\ta_2,\ta_3,\ta_4\in A^k$ such that $\{\ta_1,\ta_2\}\le\{\ta_3,\ta_4\}$ in  $\bA^k$. Let us say $\ta_i=(a_{i,1},\dots,a_{i,k})$ for $i\in\{1,2,3,4\}$. Then for all $j\in\{1,\dots,k\}$ we have $\{a_{1,j},a_{2,j}\}\le\{a_{3,j},a_{4,j}\}$ in $\bA$. By the assumption, for every $j\in\{1,\dots,k\}$ there exists a $c_j$ such that $\{a_{1,j},a_{2,j}\}\le c_j\le\{a_{3,j},a_{4,j}\}$. But then, with $\tc=(c_1,\dots,c_k)$, we also have 	$\{\ta_1,\ta_2\}\le\tc\le\{\ta_3,\ta_4\}$. Hence $\bA^k$ is $X_5$-dense.
	
	``$\Leftarrow$'' Let $a_1,a_2,a_3,a_4\in A$ such that $\{a_1,a_2\}\le\{a_3,a_4\}$ in $\bA$.
	Define $\ta_i\in A^k$ according to $\ta_i=(a_i,\dots,a_i)$, where $i\in\{1,2,3,4\}$. Clearly, we have $\{\ta_1,\ta_2\}\le\{\ta_3,\ta_4\}$ in $\bA^k$. Thus, by the assumption, there exists $\tc\in A^k$ such that $\{\ta_1,\ta_2\}\le\tc\le\{\ta_3,\ta_4\}$ in $\bA^k$. Suppose $\tc=(c_1,\dots,c_k)$. Then $\{a_1,a_2\}\le c_1\le\{a_3,a_4\}$  in $\bA$. Hence $\bA$ is $X_5$-dense.  
\end{proof}

\begin{lemma}\label{locbound}
	Let $\bA=(A,\le)$ be a poset and let $k\in\bN\setminus\{0\}$. Then $\bA$ is locally bounded if and only if $\bA^k$ is locally bounded.\qed
\end{lemma}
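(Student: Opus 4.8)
The plan is to argue exactly as in the proof of Lemma~\ref{X5dense}, since local boundedness --- like $X_5$-density --- is a property that is ``tested coordinatewise'' in a direct power, the product order on $\bA^k$ being by definition the coordinatewise order.

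For the implication ``$\Rightarrow$'', I would start from a finite subset $\{\ta_1,\dots,\ta_n\}$ of $A^k$, write $\ta_i=(a_{i,1},\dots,a_{i,k})$, and for each coordinate $j\in\{1,\dots,k\}$ apply local boundedness of $\bA$ to the finite set $B_j:=\{a_{1,j},\dots,a_{n,j}\}\subseteq A$ to obtain $c_j,d_j\in A$ with $c_j\le B_j\le d_j$. Setting $\tc:=(c_1,\dots,c_k)$ and $\td:=(d_1,\dots,d_k)$, one reads off from the definition of the product that $\tc\le\{\ta_1,\dots,\ta_n\}\le\td$ in $\bA^k$; hence $\bA^k$ is locally bounded.

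For ``$\Leftarrow$'' I would use the diagonal embedding of $\bA$ into $\bA^k$: given a finite $B=\{b_1,\dots,b_n\}\subseteq A$, put $\tb_i:=(b_i,\dots,b_i)$, apply local boundedness of $\bA^k$ to $\{\tb_1,\dots,\tb_n\}$ to get $\tc=(c_1,\dots,c_k)$ and $\td=(d_1,\dots,d_k)$ with $\tc\le\{\tb_1,\dots,\tb_n\}\le\td$, and then project onto the first coordinate to conclude $c_1\le B\le d_1$ in $\bA$. Thus $\bA$ is locally bounded.

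I expect no real obstacle here: the only point to verify is that $\le_{\bA^k}$ is the coordinatewise order, which is immediate from the definition of the product of relational structures in the Preliminaries, and this makes both the reassembly step (forward direction) and the projection step (backward direction) go through verbatim. This is why the lemma is stated with a \qed and the routine verification is left to the reader.
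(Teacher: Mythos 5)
Your argument is correct and coincides with the paper's own proof: the forward direction bounds each coordinate separately and reassembles the bounds into tuples, and the backward direction uses the diagonal tuples $(a_i,\dots,a_i)$ and projects the resulting bounds onto the first coordinate. Nothing is missing.
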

\begin{proof}
	``$\Rightarrow$'' Let $\{\ta_1,\dots,\ta_m\}\subseteq A^k$, where $\ta_i=(a_{i,1},\dots,a_{i,k})$ for $i\in\{1,\dots,m\}$. By the assumption, we have that for every $j\in\{1,\dots,k\}$ there exist $c_j,d_j\in A$ such that $c_j\le\{a_{1,j},\dots,a_{m,j}\}\le d_j$. Define $\tc:=(c_1,\dots,c_k)$ and $\td:=(d_1,\dots,d_k)$. Then $\tc\le\{\ta_1,\dots,\ta_m\}\le\td$. Hence, $\bA^k$ is locally bounded.
	
	``$\Leftarrow$'' Let $\{a_1,\dots,a_m\}\subseteq A$. For $i\in\{1,\dots,m\}$ define $\ta_i\in A^k$ according to $\ta_i:=(a_i,\dots,a_i)$. By the assumption, there exist a $\tc,\td\in A^k$ such that $\tc\le\{\ta_1,\dots,\ta_m\}\le\td$. Suppose, $\tc=(c_1,\dots,c_k)$, $\td=(d_1,\dots,d_k)$. Then we have $c_1\le\{a_1,\dots,a_m\}\le d_1$. Hence, $\bA$ is locally bounded.
\end{proof}

\begin{lemma}\label{chain}
	Let $\bA=(A,\le)$ be a poset that contains a chain of length $l$. Then for every $l$-element poset $\bB$ there exists a $k\in\bN\setminus\{0\}$ such that $\bB\in\Age(\bA^k)$.
\end{lemma}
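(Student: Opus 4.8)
The plan is to realize $\bB$ inside a finite direct power of $\bA$ by means of a linear-extension (realizer) decomposition. The starting point is the elementary fact that a finite poset $\bB=(B,\le)$ is the intersection of its linear extensions: there exist total orders $\le_1,\dots,\le_t$ on the $l$-element set $B$, each refining $\le$, such that for all $b,b'\in B$ one has $b\le b'$ in $\bB$ if and only if $b\le_i b'$ for every $i\in\{1,\dots,t\}$. (For a finite poset this is immediate: for each pair $b,b'$ with $b\not\le b'$ in $\bB$ the transitive closure of ${\le}\cup\{(b',b)\}$ is again a partial order --- antisymmetry uses exactly that $b\le b'$ fails --- so it extends to a linear order in which $b\not\le_i b'$; collecting these, together with one arbitrary linear extension, yields a realizer. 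Note $t\ge1$.)

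Next I would fix a chain $c_1<c_2<\cdots<c_l$ in $\bA$ witnessing the hypothesis. For each $i\in\{1,\dots,t\}$, enumerate $B$ as $b^{(i)}_1,\dots,b^{(i)}_l$ in increasing $\le_i$-order and set $f_i(b^{(i)}_j):=c_j$. Since a monotone bijection between two finite chains is an isomorphism, each $f_i\colon(B,\le_i)\to\bA$ is an embedding; in particular it reflects $\le_i$. Define $g\colon B\to A^t$ by $g(b):=(f_1(b),\dots,f_t(b))$, a map into the carrier of $\bA^t$. Then for $b,b'\in B$ we have $g(b)\le g(b')$ in $\bA^t$ iff $f_i(b)\le f_i(b')$ in $\bA$ for every $i$, which --- since each $f_i$ reflects $\le_i$ --- is equivalent to $b\le_i b'$ for every $i$, and this, by the realizer property, holds iff $b\le b'$ in $\bB$. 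Hence $g$ is an order-embedding, and it is injective already because $f_1$ is; thus $g$ embeds $\bB$ into $\bA^t$, so $k:=t$ does the job. The argument is uniform in $l\ge1$ (for $l=1$ it simply sends the point of $\bB$ to $c_1$).

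I do not expect a real obstacle here: the only two steps deserving a moment's care are that each $f_i$ is an \emph{embedding} and not merely order-preserving, which is automatic for chains, and that a tuple of embeddings into a direct power is again an embedding, which is precisely the realizer identity used above. As an alternative route that sidesteps linear extensions, one may embed $\bB$ into the Boolean lattice $\mathbf{2}^{l}$ via the principal down-set map $b\mapsto\{b'\in B\mid b'\le b\}$ and then compose with the $l$-th power of the embedding $\mathbf{2}=(\{0,1\},\le)\to\bA$, $0\mapsto c_1$, $1\mapsto c_2$ (available once $l\ge2$; the case $l=1$ is trivial), obtaining $\bB\hookrightarrow\bA^{l}$ and hence $k=l$.
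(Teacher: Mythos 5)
Your main argument is exactly the paper's proof: the paper invokes the Dushnik--Miller theorem to get a realizer of $\bB$ by linear extensions and then defines the same embedding $b\mapsto(c_{h_1(b)},\dots,c_{h_k(b)})$ into $\bA^k$, so your proposal (which additionally proves the realizer fact from scratch) is correct and essentially identical. Your alternative route through the Boolean lattice $\mathbf{2}^l$ is also valid, but it is not the argument used in the paper.
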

\begin{proof}
	By the Dushnik-Miller Theorem \cite{DusMil41}, the order relation of $\bB$ is equal to the intersection of a collection  of linear order relations on $B$. The smallest possible size of such a collection is called the order dimension of $\bB$. In other words, if the order dimension of $\bB$ is equal to $k$, then there exists an $l\times k$-matrix
	\[
	\begin{pmatrix}
		a_{1,1} & a_{1,2}&\dots & a_{1,k}\\
		a_{2,1} & a_{2,2}&\dots & a_{2,k}\\
		\vdots & \vdots &\ddots & \vdots \\
		a_{l,1} & a_{l,2}&\dots & a_{l,k}
	\end{pmatrix}
	\]
	with entries from $B$ such that 
	\begin{enumerate}
		\item $\forall i\in\{1,\dots,k\}: \{a_{1,i},\dots,a_{l,i}\}=B$,
		\item $\forall a,b\in B:  a\le b \iff \Big(\forall j\in\{1,\dots,k\}\,:\,h_j(a)\le h_j(b)\Big) $, 
	\end{enumerate}
	where $h_j(x)=i$ whenever $a_{i,j}=x$.
	
	Let $c_1< c_2<\dots<c_l$ be a chain of length $l$ in $\bA$. Define $f:B\to A^m$ by $a\mapsto (c_{h_1(a)},\dots,c_{h_k(a)})$. Then, by the second property of the matrix $M$, we have that $f$ is actually an embedding of $\bB$ into $\bA^k$. In particular, $\bB\in\Age(\bA^k)$.  
\end{proof}

\begin{lemma}\label{largeage}
	Let $\bA=(A,\le)$ be a  poset that is not an antichain. Then for every finite poset $\bB$ there exists a $k\in\bN\setminus\{0\}$ such that $\bB\in\Age(\bA^k)$.\qed 
\end{lemma}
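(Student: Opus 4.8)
The plan is to bootstrap from Lemma~\ref{chain}, which already settles the claim for $l$-element posets $\bB$ whenever $\bA$ happens to contain a chain of length $l$. The only gap is that a poset which is merely not an antichain is guaranteed to contain just a chain of length $2$, namely a pair $a<b$ with $a\ne b$; so a naive application of Lemma~\ref{chain} handles only two-element $\bB$. The conceptual point is therefore that one should not try to embed $\bB$ into a power of $\bA$ directly, but should first inflate $\bA$ to a power that already contains a sufficiently long chain, and only then apply Lemma~\ref{chain} to that power.

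Concretely, I would first record that direct powers produce arbitrarily long chains. Fix $a<b$ in $\bA$ with $a\ne b$. For $n\in\bN\setminus\{0\}$ and $0\le i\le n$, let $\tu_i\in A^n$ be the tuple obtained from $(a,a,\dots,a)$ by replacing its first $i$ coordinates by $b$. A coordinatewise check using the definition of the product order gives $\tu_0<\tu_1<\dots<\tu_n$, and the $\tu_i$ are pairwise distinct because $\tu_i$ and $\tu_j$ differ in coordinate $\max\{i,j\}$ whenever $i\ne j$. Hence $\bA^n$ contains a chain of length $n+1$, and --- any subset of a chain being again a chain --- a chain of every length between $1$ and $n+1$.

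To finish, let $\bB$ be a finite poset, put $l:=|B|$ and $n:=\max\{l-1,1\}$, so that $n\in\bN\setminus\{0\}$ and $n+1\ge l$. By the previous paragraph $\bA^n$ contains a chain of length $l$, so Lemma~\ref{chain}, applied with $\bA^n$ in the role of $\bA$, produces a $k'\in\bN\setminus\{0\}$ with $\bB\in\Age\big((\bA^n)^{k'}\big)$. Since $(\bA^n)^{k'}$ is canonically isomorphic to $\bA^{nk'}$, setting $k:=nk'$ yields $\bB\in\Age(\bA^k)$, as desired. I expect no genuine obstacle here: the only steps needing care are the coordinatewise verification that the $\tu_i$ form a chain and the identification $(\bA^n)^{k'}\cong\bA^{nk'}$, both of which are routine, so the entire content of the argument is the reduction to Lemma~\ref{chain} via this power trick.
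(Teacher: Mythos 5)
Your proposal is correct and is essentially the paper's own argument: the paper likewise builds the chain $(a,\dots,a)<(a,\dots,a,b)<\dots<(b,\dots,b)$ of length $l$ in $\bA^{l-1}$, applies Lemma~\ref{chain} to that power, and concludes via $(\bA^{l-1})^m\cong\bA^{(l-1)m}$. The only cosmetic differences are that the paper dismisses the case $l=1$ as trivial rather than using $n=\max\{l-1,1\}$, and replaces coordinates from the right instead of the left.
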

\begin{proof}
	Since $\bA$ is not an antichain, it contains elements $a$ and $b$ such that $a<b$. Suppose that $\bB$ has $l$ elements (we may assume that $l>1$ since otherwise nothing needs to be proved). Then $\bA^{l-1}$ contains a chain of length $l$ --- namely
	\[ (a,a,\dots,a)<(a,a,\dots,a,b)<(a,a,\dots,b,b)<\dots<(b,b\dots,b).\]
	Hence, by Lemma~\ref{chain}, there exists some $m$ such that $\bB\in \Age((\bA^{l-1})^m)$. Since $(\bA^{l-1})^m\cong \bA^{(l-1)\cdot m}$, with $k:=(l-1)m$, we have $\bB\in\Age(\bA^k)$.  
\end{proof}

\begin{lemma}\label{locboundPH}
	Let $\bA=(A,\le)$ be a  po\-ly\-mor\-phism-ho\-mo\-ge\-ne\-ous poset that is not an antichain. Then $\bA$ is locally bounded.
\end{lemma}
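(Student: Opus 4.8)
The plan is to reduce everything to homomorphism-homogeneity of the finite powers of $\bA$ (via Proposition~\ref{PHHH}), together with the observation that, since $\bA$ is not an antichain, every finite poset embeds into some finite power of $\bA$ (Lemma~\ref{largeage}). So fix a finite subset $B=\{b_1,\dots,b_m\}$ of $A$; the goal is to produce $c,d\in A$ with $c\le B\le d$.

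First I would introduce a ``pattern'' poset $\mathbf{P}$ on the $(m+2)$-element set $\{p_0,p_1,\dots,p_m,p_{m+1}\}$ whose order is generated by $p_0\le p_i\le p_{m+1}$ for $1\le i\le m$, with $p_1,\dots,p_m$ pairwise incomparable. By Lemma~\ref{largeage} there are a $k\in\bN\setminus\{0\}$ and an embedding $\iota\colon\mathbf{P}\hookrightarrow\bA^k$. Put $\td:=\iota(p_0)$, $\ta_i:=\iota(p_i)$ for $1\le i\le m$, and $\tc:=\iota(p_{m+1})$; since $\iota$ is an embedding we have $\td\le\ta_i\le\tc$ in $\bA^k$ for every $i$, and $\{\ta_1,\dots,\ta_m\}$ induces an antichain in $\bA^k$. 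Next I would transfer $B$ into $\bA^k$ along the diagonal: set $\tb_i:=(b_i,\dots,b_i)\in A^k$ and let $\varphi$ be the map $\ta_i\mapsto\tb_i$. Its domain is an antichain, so $\varphi$ is trivially order-preserving, i.e.\ a local homomorphism of $\bA^k$. As $\bA$ is polymorphism-homogeneous, $\bA^k$ is homomorphism-homogeneous by Proposition~\ref{PHHH}, hence $\varphi$ extends to an endomorphism $\hat h$ of $\bA^k$. Applying $\hat h$ to $\td\le\ta_i\le\tc$ yields $\hat h(\td)\le\tb_i\le\hat h(\tc)$ in $\bA^k$ for each $i$; reading off the first coordinate gives $c:=e_1(\hat h(\td))\le b_i\le e_1(\hat h(\tc))=:d$ for all $i$, i.e.\ $c\le B\le d$. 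Since $B$ was an arbitrary finite subset, $\bA$ is locally bounded.

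I do not expect a genuine obstacle here; the one point that needs attention is that $\iota$ must be an embedding in the model-theoretic sense, so that the image of the antichain $\{p_1,\dots,p_m\}$ really is an antichain in $\bA^k$ — this is precisely what makes $\varphi$ a homomorphism and lets the extension step go through. (If one prefers, the upper and lower bounds can also be obtained in two separate steps, using the dual poset, since $\Pol(\bA)=\Pol(\bA^{\mathrm{op}})$, but handling both at once via $\mathbf{P}$ is cleaner.) Conceptually this is the same ``realize a finite pattern in a power, then pull a bound back through a diagonal map'' scheme already used in this section, e.g.\ in the proofs of Lemmas~\ref{star} and~\ref{center}.
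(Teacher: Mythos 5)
Your proof is correct and relies on essentially the same machinery as the paper's: Lemma~\ref{largeage} to realize a finite pattern in some power $\bA^k$, the diagonal copies of the given elements, and Proposition~\ref{PHHH} to extend the resulting local homomorphism of $\bA^k$. The only difference is cosmetic: the paper argues by contradiction, realizing just a bounded incomparable pair in $\bA^k$ and mapping it onto a pair without a joint upper bound, whereas you argue directly with an $(m+2)$-element bounded-antichain pattern and read both bounds off the first coordinate of the extension.
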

\begin{proof}
	Suppose on the contrary that $\bA$ is not locally bounded. Without loss of generality assume that $\bA$ is not upwards directed. Let $a,b$ be two  elements of $\bA$ without a joint upper bound. Since $\bA$ is not an antichain, by Lemma~\ref{largeage}, there exists a $k$, such that in $\bA^k$ there exist two non-comparable elements $\tc,\td$ that have a joint upper bound $\tu$. Let $\ta,\tb\in A^k$ defined according to $\ta:=(a,\dots,a)$ and $\tb:=(b,\dots,b)$. Then $\ta$ and $\tb$ have no joint upper bound in $\bA^k$. Note that the mapping $f: \tc\mapsto\ta,\,\td\mapsto\tb$ is a local homomorphism of $\bA^k$ which can not be extended to an endomorphism of $\bA^k$. It follows that $\bA^k$ is not ho\-mo\-mor\-phism-ho\-mo\-ge\-ne\-ous. Hence, by Proposition~\ref{PHHH}, $\bA$ is not po\-ly\-mor\-phism-ho\-mo\-ge\-ne\-ous --- contradiction. 
\end{proof}

\begin{lemma}\label{X5densePH}
	Let $\bA=(A,\le)$ be a po\-ly\-mor\-phism-ho\-mo\-ge\-ne\-ous poset. Then $\bA$ is $X_5$-dense. 
\end{lemma}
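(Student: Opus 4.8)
The plan is to deduce $X_5$-density of $\bA$ from Lemma~\ref{largeage}: I would realise an induced copy of $X_5$ inside a suitable finite power $\bA^k$, map its four ``outer'' vertices onto the given quadruple by a local polymorphism of $\bA$, extend that local polymorphism to a global one, and read off the required intermediate element as the image of the central vertex of the copy of $X_5$. This parallels the argument for Lemma~\ref{locboundPH}.

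First I would dispose of the degenerate case in which $\bA$ is an antichain: there $x\le y$ holds only for $x=y$, so $\{a_1,a_2\}\le\{a_3,a_4\}$ forces $a_1=a_2=a_3=a_4$, and $c:=a_1$ trivially witnesses the defining condition of $X_5$-density. So assume $\bA$ is not an antichain and fix $a_1,a_2,a_3,a_4\in A$ with $\{a_1,a_2\}\le\{a_3,a_4\}$. By Lemma~\ref{largeage} there is $k\in\bN\setminus\{0\}$ with $X_5\in\Age(\bA^k)$; fix an embedding of $X_5$ into $\bA^k$ and denote the images of the two minimal elements by $\tx_1,\tx_2$, of the two maximal elements by $\tx_3,\tx_4$, and of the central element by $\ty$, so that $\tx_i\le\ty\le\tx_j$ in $\bA^k$ for $i\in\{1,2\}$, $j\in\{3,4\}$, while --- the copy being induced --- the only order relations $\tx\le\tx'$ with $\tx\neq\tx'$ among $\tx_1,\dots,\tx_4$ in $\bA^k$ are the relations $\tx_i\le\tx_j$ with $i\in\{1,2\}$, $j\in\{3,4\}$.

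The central point of the argument is then that $f\colon\{\tx_1,\dots,\tx_4\}\to A$, $\tx_i\mapsto a_i$, is a homomorphism from the substructure of $\bA^k$ induced by $\{\tx_1,\dots,\tx_4\}$ into $\bA$: the only relations it must preserve are $\tx_i\le\tx_j$ for $i\in\{1,2\}$, $j\in\{3,4\}$, and these are sent to $a_i\le a_j$, which hold by hypothesis; note that $f$ need not be injective or order-reflecting, so possible coincidences or extra comparabilities among the $a_i$ cause no trouble. Thus $f$ is a $k$-ary local polymorphism of $\bA$, and since $\bA$ is polymorphism-homogeneous it extends to some $\hat f\in\Pol^{(k)}(\bA)$. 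Setting $c:=\hat f(\ty)$ and using that $\hat f$ is a homomorphism $\bA^k\to\bA$ with $\tx_1,\tx_2\le\ty\le\tx_3,\tx_4$, we obtain $a_i=\hat f(\tx_i)\le\hat f(\ty)=c\le\hat f(\tx_j)=a_j$ for $i\in\{1,2\}$, $j\in\{3,4\}$, i.e.\ $\{a_1,a_2\}\le c\le\{a_3,a_4\}$, as required. I do not anticipate a real obstacle; the only points that need a little care are the observation that $f$ is honestly a homomorphism despite the $a_i$ possibly being non-distinct or comparable ``within a level'', and the (routine) identification of a local homomorphism from a finite substructure of $\bA^k$ to $\bA$ with a $k$-ary local polymorphism, which is what licenses the appeal to polymorphism-homogeneity.
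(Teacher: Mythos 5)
Your proof is correct, and it rests on the same key ingredient as the paper's (Lemma~\ref{largeage}, yielding an induced copy of $X_5$ in some finite power $\bA^k$), but you run the argument directly rather than contrapositively: you map the four outer vertices of the induced copy of $X_5$ onto the given quadruple $a_1,\dots,a_4$, note that this is a $k$-ary local polymorphism because the substructure induced on those four vertices is exactly the bow-tie (so only the cross relations $\tx_i\le\tx_j$, $i\in\{1,2\}$, $j\in\{3,4\}$, need to be preserved, and they go to the hypothesised $a_i\le a_j$), extend it by polymorphism-homogeneity, and read off the witness $c$ as the image of the central vertex $\ty$. The paper instead assumes that no intermediate element exists, deduces that $\{a_1,a_2,a_3,a_4\}$ induces a bow-tie (hence $\bA$ is not an antichain), sends the $X_5$-configuration in $\bA^k$ onto the diagonal tuples $(a_i,\dots,a_i)$, observes that this local endomorphism of $\bA^k$ cannot extend, and contradicts polymorphism-homogeneity via Proposition~\ref{PHHH}. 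Your version is slightly cleaner in that it avoids Proposition~\ref{PHHH} and any non-extendability check, at the small cost of handling the antichain case separately (the paper gets non-antichain for free from its assumed counterexample); your treatment of that case, and your remark that $f$ need only preserve, not reflect, the order and need not be injective, are both correct.
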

\begin{proof}
	Suppose on the contrary that $\bA$ is not $X_5$-dense. Then there exist four elements $a_1,a_2,a_3,a_4\in A$ such that $\{a_1,a_2\}\le\{a_3,a_4\}$, but there is no $c\in A$ such that $\{a_1,a_2\}\le c\le\{a_3,a_4\}$. In this case, $\{a_1,a_2,a_3,a_4\}$ induce a substructure of $\bA$ that is isomorphic to the bow-tie. In particular, $\bA$ is not an antichain. From Lemma~\ref{largeage} it follows that there is a $k\in\bN\setminus\{0\}$ such that there are $\tb_1,\tb_2,\tb_3,\tb_4,\td\in A^k$ which fulfill $\{\tb_1,\tb_2\}\le\td\le\{\tb_3,\tb_4\}$.  Now, for $i\in\{1,2,3,4\}$ define $\ta_i\in A^k$ according to $\ta_i=(a_i,\dots,a_i)$. Then there is no $\tc\in A^k$ such that $\{\ta_1,\ta_2\}\le\tc\le\{\ta_3,\ta_4\}$.
	Note that the mapping $f: \tb_i\mapsto \ta_i$ ($i\in\{1,2,3,4\}$) is a local homomorphism of $\bA^k$ which can not be extended to an endomorphism of $\bA^k$. We conclude that $\bA^k$ is not ho\-mo\-mor\-phism-ho\-mo\-ge\-ne\-ous and thus, by Proposition~\ref{PHHH}, $\bA$ is not po\-ly\-mor\-phism-ho\-mo\-ge\-ne\-ous --- contradiction.
\end{proof}
Now we are ready to give a complete characterization of the po\-ly\-mor\-phism-ho\-mo\-ge\-ne\-ous posets. Note that the characterization is independent of the cardinality of the posets in question.
\begin{theorem}
	A partially ordered set $\bA=(A\le)$ is po\-ly\-mor\-phism-ho\-mo\-ge\-ne\-ous if and only if either it  is an antichain, or it is locally bounded and $X_5$-dense.
\end{theorem}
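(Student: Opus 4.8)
The plan is to read off the characterization from the lemmas established above: po\-ly\-mor\-phism-ho\-mo\-ge\-ne\-i\-ty is reduced, via Proposition~\ref{PHHH}, to ho\-mo\-mor\-phism-ho\-mo\-ge\-ne\-i\-ty of all finite powers, and the latter is then controlled by Ma\v{s}ulovi\'c's classification (Theorem~\ref{posetHH}).

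For the ``only if'' direction, assume $\bA$ is po\-ly\-mor\-phism-ho\-mo\-ge\-ne\-ous. If $\bA$ is an antichain there is nothing to prove; otherwise $\bA$ is not an antichain, so Lemma~\ref{locboundPH} shows that $\bA$ is locally bounded and Lemma~\ref{X5densePH} shows that $\bA$ is $X_5$-dense. No further argument is needed here.

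For the ``if'' direction I would distinguish two cases. If $\bA$ is an antichain, then $\le_\bA$ is the identity relation, hence the componentwise order on each finite power $\bA^k$ is again the identity relation and $\bA^k$ is an antichain; thus every connected component of $\bA^k$ is a one-element chain, so $\bA^k$ satisfies clause~(1) of Theorem~\ref{posetHH} and is ho\-mo\-mor\-phism-ho\-mo\-ge\-ne\-ous. If instead $\bA$ is locally bounded and $X_5$-dense, then Lemma~\ref{locbound} and Lemma~\ref{X5dense} give that $\bA^k$ is locally bounded and $X_5$-dense for every $k\in\bN\setminus\{0\}$, so $\bA^k$ satisfies clause~(5) of Theorem~\ref{posetHH} and is again ho\-mo\-mor\-phism-ho\-mo\-ge\-ne\-ous. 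In either case all finite powers of $\bA$ are ho\-mo\-mor\-phism-ho\-mo\-ge\-ne\-ous, so by Proposition~\ref{PHHH} the structure $\bA$ is po\-ly\-mor\-phism-ho\-mo\-ge\-ne\-ous.

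As all the needed ingredients are already available, I do not anticipate a genuine obstacle; the one point deserving care is to treat the antichain case on its own, since an antichain with at least two elements is not locally bounded and hence forms a family genuinely distinct from the locally bounded, $X_5$-dense posets rather than a degenerate special case of them.
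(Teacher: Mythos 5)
Your proposal is correct and follows essentially the same route as the paper: both directions are handled by Lemmas~\ref{locboundPH} and~\ref{X5densePH}, respectively by Lemmas~\ref{locbound} and~\ref{X5dense} together with Theorem~\ref{posetHH} and Proposition~\ref{PHHH}. The only (immaterial) difference is the antichain case of the ``if'' direction, which the paper settles directly by noting that every function on $A$ is a polymorphism, whereas you route it through clause~(1) of Theorem~\ref{posetHH} applied to the powers $\bA^k$.
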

\begin{proof}
	``$\Rightarrow$'' If $\bA$ is a polymorphism-homogeneous structure, then we know, by Lemma~\ref{X5densePH}, that $\bA$ is $X_5$-dense. Moreover, by Lemma~\ref{locboundPH}, $\bA$ is either an antichain or it is locally bounded.
	
	``$\Leftarrow$'' If $\bA$ is an antichain, then every function on $A$ is a polymorphism of $\bA$. Thus, $\bA$ is polymorphism-homogeneous. Suppose now that $\bA$ is locally bounded and $X_5$-dense. By Lemma~\ref{locbound} and Lemma~\ref{X5dense}, $\bA^k$ is locally bounded and $X_5$-dense, for every $k\in\bN\setminus\{0\}$. Finally, by Theorem~\ref{posetHH}, $\bA^k$ is homomorphism homogeneous, for every $k\in\bN\setminus\{0\}$. Thus, by Proposition~\ref{PHHH}, $\bA$ is polymorphism-homogeneous.
\end{proof}
As a special case, we obtain the following characterization for finite po\-ly\-mor\-phism-ho\-mo\-ge\-ne\-ous posets:
\begin{corollary}
	A finite poset $\bA$ is po\-ly\-mor\-phism-ho\-mo\-ge\-ne\-ous if and only if it is an antichain or a lattice-order.
\end{corollary}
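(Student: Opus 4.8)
The plan is to reduce everything to the preceding theorem, which already characterizes polymorphism-homogeneous posets of arbitrary cardinality: such a poset is either an antichain, or it is locally bounded and $X_5$-dense. Hence it suffices to prove that, for a \emph{finite} poset $\bA=(A,\le)$ that is not an antichain, being locally bounded and $X_5$-dense is the same as being a lattice-order. First I would record the trivial observation that, for a finite poset, local boundedness is equivalent to boundedness: applying the definition to the finite set $A$ itself gives elements $0,1\in A$ with $0\le A\le 1$, and conversely such $0,1$ bound every subset of $A$.

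For the implication ``lattice-order $\Rightarrow$ locally bounded and $X_5$-dense'': a finite lattice has a least element $\bigwedge A$ and a greatest element $\bigvee A$, so it is locally bounded; and if $\{a_1,a_2\}\le\{a_3,a_4\}$, then $c:=a_1\vee a_2$ lies above $a_1,a_2$ by definition of the join and below $a_3,a_4$ since each of $a_3,a_4$ is a common upper bound of $a_1,a_2$, so $\{a_1,a_2\}\le c\le\{a_3,a_4\}$ and $\bA$ is $X_5$-dense. (This direction does not even use finiteness.)

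For the converse, assume $\bA$ is finite, locally bounded, and $X_5$-dense, and fix $a_1,a_2\in A$; I would show that $a_1\vee a_2$ and $a_1\wedge a_2$ exist, so that $\bA$ is a lattice. The set $U$ of common upper bounds of $\{a_1,a_2\}$ is nonempty (it contains the top element) and finite, hence has a minimal element; if it had two distinct minimal elements $a_3\ne a_4$, then $\{a_1,a_2\}\le\{a_3,a_4\}$, so $X_5$-density produces $c$ with $\{a_1,a_2\}\le c\le\{a_3,a_4\}$, forcing $c\in U$ and $c\le a_3$, $c\le a_4$, hence $c=a_3=a_4$ by minimality --- a contradiction. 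Thus $U$ has a unique minimal element, which by finiteness is its least element, namely $a_1\vee a_2$. The dual argument --- applying $X_5$-density to two maximal common lower bounds $b_1,b_2$ of $\{a_1,a_2\}$, which satisfy $\{b_1,b_2\}\le\{a_1,a_2\}$ --- yields $a_1\wedge a_2$. Since every pair in the nonempty finite poset $\bA$ then has a join and a meet, $\bA$ is a lattice-order. There is no genuine obstacle here: the only points needing a moment's care are the collapse of ``locally bounded'' to ``bounded'' in the finite setting and the bookkeeping with minimal versus maximal bounds when invoking $X_5$-density.
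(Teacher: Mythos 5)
Your proof is correct, and it follows the same overall reduction as the paper: invoke the preceding theorem to replace ``polymorphism-homogeneous'' by ``antichain, or locally bounded and $X_5$-dense'', and then show that for finite posets the latter condition is equivalent to being a lattice-order. The difference lies in how that equivalence is handled: the paper simply cites Ma\v{s}ulovi\'c's characterization of finite homomorphism-homogeneous posets (\cite[Cor.4.5]{Mas07}), where it is shown that a finite locally bounded, $X_5$-dense poset is a lattice-order, whereas you prove this implication directly --- the easy direction via $c=a_1\vee a_2$, and the converse by taking a minimal common upper bound of $a_1,a_2$ and using $X_5$-density to show it is unique (if $a_3\neq a_4$ were two minimal ones, the interpolant $c$ with $\{a_1,a_2\}\le c\le\{a_3,a_4\}$ forces $c=a_3=a_4$), together with the dual argument for meets; the collapse of local boundedness to the existence of $0$ and $1$ guarantees the relevant sets of bounds are nonempty, and finiteness turns the unique minimal element into a least element. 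Your route is self-contained and elementary, which is a genuine gain in readability; the paper's route is shorter and places the combinatorial content where it historically belongs, in the classification of finite homomorphism-homogeneous posets. Both are sound, and your interpolation argument is exactly the kind of reasoning hidden behind the citation.
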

\begin{proof}
	This follows directly from \cite[Cor.4.5]{Mas07}, where the finite homomorphism homogeneous posets are characterized. In particular, it is shown there that a finite locally bounded and $X_5$-dense poset is actually a lattice order.  On the other hand, every lattice order is clearly locally bounded and $X_5$-dense.
	
\end{proof}

\subsection*{Polymorphism-homogeneous strict posets}
A strict poset is a relational structure $\bA=(A,<)$, where $<$ is a binary asymmetric, and transitive relation. For a set $M\subseteq A$ we define 
\begin{align*}
	\bA_{<M} &:= \{a\in A\mid a < M\},\\
	\bA_{>M} &:= \{a\in A\mid a > M\}.
\end{align*}
\begin{definition}
	Let $\bA=(A,<)$ be a strict poset. We say that  $\bA$ is \emph{$X_5$-dense} if for every $\{a_1,a_2,a_3,a_4\}\in A$ with $\{a_1,a_2\}<\{a_3,a_4\}$ there exists a $c\in A$ such that $\{a_1,a_2\}<c<\{a_3,a_4\}$. Moreover, we say that $\bA$ is \emph{strictly locally bounded} if for every finite set $X\subseteq A$ we have that neither $\bA_{<X}\neq\emptyset$ nor $\bA_{>X}\neq\emptyset$ is empty.
\end{definition}

The countable ho\-mo\-mor\-phism-ho\-mo\-ge\-ne\-ous strict posets were completely characterized by Cameron and Lockett:
\begin{theorem}[{Cameron, Lockett \cite[Prop.15]{CamLoc10}}]\label{strictHHposets}
	A countable strict po\-set $\bA=(A,<)$ is ho\-mo\-mor\-phism-ho\-mo\-ge\-ne\-ous if and only if
	\begin{enumerate}
		\item it is an antichain, or
		\item \label{class2}it is a disjoint union of copies of $(\mathbb{Q},<)$, or
		\item \label{class3} it is a tree with no minimal elements such that for all finite $Q\subseteq A$, $\bA_{<Q}$ has no maximal elements, or
		\item \label{class4} it is a dual tree with no maximal elements such that for all finite $Q\subseteq A$, $\bA_{>Q}$ has no minimal elements, or
		\item\label{U} $\bA$ is strictly locally bounded, for all finite $X\subseteq A$, $\bA_{<X}$ has no maximal elements and $\bA_{>X}$ has no minimal elements, and it is $X_5$-dense, or
		\item \label{class6}it is strictly locally bounded,  for all finite $X\subseteq A$, $\bA_{<X}$ has no maximal elements and $\bA_{>X}$ has no minimal elements, and $X_5\notin\Age(\bA)$.
	\end{enumerate}
\end{theorem}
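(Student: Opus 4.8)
The plan is to prove both directions through the one-point extension reformulation of ho\-mo\-mor\-phism-ho\-mo\-ge\-ne\-i\-ty. Recall that, by the equivalence of po\-ly\-mor\-phism-ho\-mo\-ge\-ne\-i\-ty and weak po\-ly\-mor\-phism-ho\-mo\-ge\-ne\-i\-ty for countable structures (taken with $k=1$, together with an induction on the number of points added), a countable structure $\bA$ is ho\-mo\-mor\-phism-ho\-mo\-ge\-ne\-ous if and only if for every finite $\bD\le\bA$, every homomorphism $h\colon\bD\to\bA$ and every $a\in A$ there is a $b\in A$ making $h\cup\{(a,b)\}$ a homomorphism on the substructure of $\bA$ with carrier $D\cup\{a\}$. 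Since a homomorphism of strict posets preserves $<$ --- hence never identifies two comparable elements, although it may identify incomparable ones --- the condition on $b$ is simply that $h(x)<b$ for all $x\in D$ with $x<a$, and $b<h(x)$ for all $x\in D$ with $x>a$. I would also use throughout that every connected component of the comparability graph of a ho\-mo\-mor\-phism-ho\-mo\-ge\-ne\-ous strict poset is again ho\-mo\-mor\-phism-ho\-mo\-ge\-ne\-ous: a homomorphism between finite subsets of a component extends to an endomorphism of $\bA$, and the restriction of that endomorphism to the (connected) component has connected image, hence maps the component into itself.

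For the ``if'' direction I would check the one-point extension property for each of the six families. Class~(1) is immediate. For the trees and dual trees of~(3) and~(4) the $h$-images of the elements of $D$ below $a$ form a finite chain (they lie inside a principal ideal of a tree) with a top element $m$; the point $b$ we seek must lie strictly above $m$ while staying strictly below the finitely many elements $Q=\{h(x)\mid x\in D,\ x>a\}$, and in a tree the set $\bA_{<Q}$ of common strict lower bounds of $Q$ is a chain containing $m$, so the hypotheses ``$\bA$ has no minimal elements'' and ``$\bA_{<Q}$ has no maximal element for every finite $Q$'' (and, for~(4), their duals) deliver $b$. For~(2),~(5) and~(6) the point $b$ has to lie simultaneously strictly above a finite set $P$ and strictly below a finite set $N$ with $P<N$, i.e.\ one needs $\bA_{>P}\cap\bA_{<N}\neq\emptyset$; this is a finitary strengthening of strict local boundedness and of $X_5$-density, and it holds because --- up to isomorphism --- the posets in~(5) and~(6) are the generic (Fra\"iss\'e-type) members of their respective ages, for which amalgamation lifts the stated two-point conditions to their finitary forms. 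For~(6) one additionally chooses $b$ so that the image stays $X_5$-free, which is possible because $X_5\notin\Age(\bA)$ already constrains the configuration $h(\bD)$.

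For the ``only if'' direction I would assume $\bA$ countable, ho\-mo\-mor\-phism-ho\-mo\-ge\-ne\-ous and not an antichain, and distinguish the disconnected and the connected case. In the disconnected case, transporting single points between two distinct components by partial homomorphisms forces that every component contains a comparable pair, that $\bA$ has neither maximal nor minimal elements, and that $\bA_{<Q}$ has no maximal element and $\bA_{>Q}$ no minimal element for every finite $Q$ lying in a single component; a separate argument rules out, in the disconnected situation, the coexistence within a component of a downward-branching and an upward-branching incomparable configuration (equivalently, $X_5\in\Age(\bA)$), so that $\bA$ falls under~(2),~(3) or~(4). In the connected case I would look at whether every principal ideal $\bA_{<a}$ is a chain and whether every principal filter $\bA_{>a}$ is a chain: in the first case $\bA$ is a tree and ho\-mo\-mor\-phism-ho\-mo\-ge\-ne\-i\-ty (via non-extendable partial homomorphisms) forces the conditions of~(3); in the second, dually,~(4); and if neither holds --- some $\bA_{<a}$ and some $\bA_{>b}$ carry incomparable pairs --- then, joining these configurations through the connected $\bA$ and using ho\-mo\-mor\-phism-ho\-mo\-ge\-ne\-i\-ty, the bow-tie embeds in $\bA$, $\bA$ is strictly locally bounded with $\bA_{<X}$ having no maximal element and $\bA_{>X}$ no minimal element for every finite $X$, and one is in~(5) when $X_5\in\Age(\bA)$ --- in which case $X_5$-density is forced --- and in~(6) otherwise.

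I expect the main obstacle to be the ``only if'' direction, and within it the forcing of the local richness conditions (``$\bA_{<X}$ has no maximal element'', and its relatives): each has to be extracted from a carefully designed partial homomorphism that admits no extension, and the real work is to verify that the ambient poset genuinely offers no place for the image. A second delicate point there is the separation of the tree-like classes~(3),~(4) from the ``thick'' classes~(5),~(6), i.e.\ pinning down exactly when a downward-branching and an upward-branching configuration must coexist in $\bA$. On the ``if'' side the subtle step is the passage from the defining two-point density conditions of~(5) and~(6) to the finitary versions needed for the one-point extension, which I expect to require a short amalgamation-and-uniqueness lemma identifying these families with generic structures. The overall scheme parallels Ma\v{s}ulovi\'c's treatment of the non-strict posets in Theorem~\ref{posetHH}, but the strict case genuinely differs: since strict-poset homomorphisms may not collapse comparable elements, both the obstructions and the extension arguments have to be re-derived rather than transported.
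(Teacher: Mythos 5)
This statement is not proved in the paper at all: it is imported verbatim from Cameron and Lockett \cite[Prop.~15]{CamLoc10}, so there is no internal argument to compare your sketch against. Judged as a self-contained re-proof of that classification, your proposal is a programme with genuine gaps rather than a proof. The one-point-extension reformulation and the reduction to ``find $b$ with $h(x)<b$ for $x<a$ in $D$ and $b<h(y)$ for $y>a$ in $D$'' are fine, and the component-restriction observation is correct; but the places you yourself flag as delicate are exactly where the content of Cameron--Lockett's proof lies, and they are not supplied.

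Concretely: in the ``if'' direction for classes (2), (5), (6) you need the finitary interpolation property (for all finite $P<N$ there is $b$ with $P<b<N$), and you propose to obtain it from the claim that the posets of (5) and (6) are, up to isomorphism, the generic Fra\"iss\'e-type members of their ages. That claim is false for class (5): by Proposition~\ref{genposetchar} these posets are exactly the \emph{extensions} of the universal homogeneous poset $\bU$, i.e.\ all posets obtained from $\bU$ by adding comparabilities --- a large family of pairwise non-isomorphic structures --- and for such an extension $\bA$ the relations $P<_{\bA}N$ need not hold in $\bU$, so genericity of $\bU$ does not hand you an interpolating point for the order of $\bA$. Deriving finitary interpolation from two-point $X_5$-density together with strict local boundedness and the no-max/no-min conditions is the crux, and it is left unproved; likewise the class-(6) step (``choose $b$ so that the image stays $X_5$-free, possible because $X_5\notin\Age(\bA)$'') is an assertion, not an argument. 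The ``only if'' direction is only a roadmap: forcing the local richness conditions, separating the tree-like classes (3)--(4) from (5)--(6), and the disconnected case (why mixed components cannot occur and why in case (2) the components must be copies of $(\mathbb{Q},<)$) are announced, not carried out. Finally, a small but real gap in the tree case: when no point of $D$ lies below $a$ you need $\bA_{<Q}\neq\emptyset$ for $Q$ the images of the points of $D$ above $a$; the hypothesis ``$\bA_{<Q}$ has no maximal elements'' is vacuous when $\bA_{<Q}$ is empty, so you must invoke the common-lower-bound clause in the definition of a tree.
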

\begin{proposition}[{Cameron, Lockett \cite[Prop.13]{CamLoc10}}]\label{genposetchar}
	The posets from Theorem~\ref{strictHHposets}(\ref{U}) are exactly the extensions of the countable universal homogeneous poset $\bU=(U,<)$.  
\end{proposition}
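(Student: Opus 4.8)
Write $\bU$ for the countable universal homogeneous strict poset; up to isomorphism it is the unique countable strict poset with the \emph{one-point extension property}: for every finite substructure $F$ and every \emph{admissible} pair $(D,E)$ of subsets of $F$ -- that is, $D\cap E=\emptyset$, $D$ is downward closed in $F$, $E$ is upward closed in $F$, and $D<E$ (every element of $D$ below every element of $E$) -- there is $q\notin F$ realizing exactly the relations $D<q<E$ and incomparable to $F\setminus(D\cup E)$. I read ``$\bA$ is an extension of $\bU$'' as: $(A,<)$ carries a transitive subrelation $\ll$ of $<$, defined on all of $A$, with $(A,\ll)\cong\bU$; and I call $\bA$ a \emph{class-$(\mathrm U)$ poset} if it is strictly locally bounded, is $X_5$-dense, and for every finite $X\subseteq A$ the set $\bA_{<X}$ has no maximal element and $\bA_{>X}$ has no minimal element. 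The direction ``extension $\Rightarrow$ class-$(\mathrm U)$'' is immediate from the copy $(A,\ll)\cong\bU$: richness of $\bU$ produces, for each finite $X$, an element $\ll$-below $X$ and one $\ll$-above $X$ (hence $\bA_{<X},\bA_{>X}\neq\emptyset$), it interpolates a $\ll$-element strictly between a given $a\in\bA_{<X}$ and $X$ (so $\bA_{<X}$ has no maximal element, and dually), and it supplies the witness for $X_5$-density -- in each case using $\ll\subseteq{<}$ to transfer the conclusion to $<$. The substance is the converse.

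For ``class-$(\mathrm U)$ $\Rightarrow$ extension'' the plan is a back-and-forth that carves a copy of $\bU$ out of $(A,<)$. A class-$(\mathrm U)$ poset is infinite (already $\bA_{<\{a\}}$ is nonempty and without maximal element). Enumerate $A$ together with all one-point extension tasks, and build an increasing chain $\ll_0\subseteq\ll_1\subseteq\cdots$ of finite strict partial orders, each contained in $<$, so that $\bigcup_n\ll_n$ has domain $A$ and $\bigl(A,\bigcup_n\ll_n\bigr)$ has the one-point extension property; this forces $\bigl(A,\bigcup_n\ll_n\bigr)\cong\bU$ and exhibits $\bA$ as an extension of $\bU$. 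A typical step is handed $\ll_n$ on a finite $F_n$ together with an admissible pair $(D,E)$, and one must find $q\in A\setminus F_n$ with $D<q<E$ in $(A,<)$ -- dropping the absent bound when $D$ or $E$ is empty -- and then set $\ll_{n+1}:=\ll_n\cup\{(d,q):d\in D\}\cup\{(q,e):e\in E\}$, which is again a strict partial order inside $<$ (by admissibility of $(D,E)$) with $q$ $\ll_{n+1}$-incomparable to $F_n\setminus(D\cup E)$; the interleaved stages that add the next enumerated element of $A$ as a $\ll$-isolated point ensure domain exhaustion. When $D$ or $E$ is empty, strict local boundedness together with the no-maximal/no-minimal conditions yields infinitely many such $q$; the remaining case is the content of the lemma below.

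\textbf{Interpolation lemma.} If $D,E\subseteq A$ are finite and nonempty with $D<E$, then there is $q\in A$ with $D<q<E$; in fact there are infinitely many, since re-applying the lemma to $(D\cup\{q\},E)$ produces a new witness above $q$. I would prove the lemma by induction on $|E|$. If $E=\{e\}$ is a singleton, then $e\in\bA_{>D}$ and $\bA_{>D}$ has no minimal element, so there is $q\in\bA_{>D}$ with $q<e$, i.e.\ $D<q<e$. If $|E|\geq2$, write $E=E'\cup\{e\}$ with $|E'|\geq1$; the inductive hypothesis gives $f$ with $D<f<E'$, and then $X_5$-density applied to the four-element configuration $D<\{f,e\}$ -- iterated, and carrying along an accumulated interpolant, when $|D|>2$ -- produces $q$ with $D<q$ and $q<\{f,e\}$; since $f<E'$ this gives $D<q<E$. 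The main obstacle is exactly this inductive reduction when both $D$ and $E$ are large: $X_5$-density is a statement about four points, and the way to bridge the gap is to shrink $E$ to a single element through the induction while maintaining, at each stage, a single $<$-upper bound of all of $D$ lying below the already-shrunken part of $E$ -- which is what the accumulated-interpolant device and the no-minimal-element hypothesis accomplish together.
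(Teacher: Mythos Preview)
The paper does not prove this proposition; it merely quotes it from \cite[Prop.~13]{CamLoc10}. So there is no in-paper proof to compare against, and I assess your argument on its own merits.

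Your backward direction (class-(U) $\Rightarrow$ extension of $\bU$) is correct and is essentially the natural proof: a back-and-forth construction of $\ll\subseteq{<}$, fuelled by the Interpolation Lemma. The double induction in that lemma --- first on $|E|$, reducing to $|E|=1$ via the no-minimal-element hypothesis, then handling $D<\{f,e\}$ by accumulating interpolants through $D$ using $X_5$-density --- is sound, though the phrase ``$X_5$-density applied to the four-element configuration $D<\{f,e\}$'' really requires the inner induction on $|D|$ to be spelled out (pair the running interpolant with the next $d\in D$, apply $X_5$-density, repeat).

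Your forward direction, however, contains a genuine gap. You write that from $(A,\ll)\cong\bU$ and $\ll\subseteq{<}$ one can read off that $\bA_{<X}$ has no maximal element and that $(A,<)$ is $X_5$-dense, ``in each case using $\ll\subseteq{<}$ to transfer the conclusion to $<$.'' But these properties are \emph{not} upward-monotone in the order relation. If $a\in\bA_{<X}$ then you only know $a<X$, not $a\ll X$, so you cannot ask $\bU$ for a $\ll$-interpolant between $a$ and $X$; likewise, a $<$-bow-tie $\{a_1,a_2\}<\{a_3,a_4\}$ need not be a $\ll$-bow-tie, so $\bU$'s richness is inapplicable. In fact, under your reading of ``extension'', the implication fails outright: take $(A,\ll)=\bU$, pick $u,v$ that are $\ll$-incomparable, and let $<$ be the transitive closure of $\ll\cup\{(u,v)\}$. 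Then $\ll\subseteq{<}$, so $(A,<)$ is an extension of $\bU$ in your sense; yet there is no $c$ with $u<c<v$ (any such $c$ would force $u\ll v$ or $v\leq_\ll u$), so $(A,<)$ is not $X_5$-dense, and $u$ is maximal in $\bA_{<\{v\}}$. Either ``extension of $\bU$'' carries a more restrictive meaning in \cite{CamLoc10} than the one you adopt, or the equivalence is being asserted only within the class of homomorphism-homogeneous posets; in either case your argument for this direction does not go through as written, and strict local boundedness is the only one of the class-(U) conditions that your transfer device actually yields.
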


\begin{lemma}\label{infChain}
	Every countable homomorphism homogeneous strict poset that is not an antichain contains an  infinite chain. 
\end{lemma}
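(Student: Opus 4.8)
The plan is to use homomorphism-homogeneity just once, to manufacture a single endomorphism, and then to iterate it. Since $\bA$ is not an antichain, fix elements $a_0,a_1\in A$ with $a_0<a_1$. First I would note that every one-point substructure of $\bA$ carries the empty relation: asymmetry of $<$ forces irreflexivity, so $a_0\not<a_0$ (and likewise $a_1\not<a_1$). Hence the map sending $a_0$ to $a_1$ is a homomorphism between finite substructures of $\bA$ — vacuously, since there is no relation to check on a singleton. Homomorphism-homogeneity therefore yields an endomorphism $g$ of $\bA$ with $g(a_0)=a_1$.

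Next I would set $a_n:=g^n(a_0)$ for $n\in\bN$ (so that $a_1=g(a_0)$ is indeed the element chosen above) and prove by induction that $a_n<a_{n+1}$ for every $n$. The base case is the choice $a_0<a_1$; and if $a_n<a_{n+1}$, then applying the homomorphism $g$ gives $a_{n+1}=g(a_n)<g(a_{n+1})=a_{n+2}$. Transitivity of $<$ then yields $a_i<a_j$ whenever $i<j$, and asymmetry (hence irreflexivity) of $<$ shows that the $a_i$ are pairwise distinct. Thus $\{a_n\mid n\in\bN\}$ is an infinite chain in $\bA$, as required.

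I do not expect a genuine obstacle here; the argument does not even use countability. The only points that need a moment's care are the first one — recognising that a single-element partial map is always a partial homomorphism of a strict poset, so that homomorphism-homogeneity applies to it — and the observation that \emph{strictness} (asymmetry together with transitivity) is precisely what guarantees that the iterates $g^n(a_0)$ are all different, so that one genuinely obtains an infinite chain rather than merely chains of unbounded finite length. An alternative route would be to invoke the classification of countable homomorphism-homogeneous strict posets (Theorem~\ref{strictHHposets}) and to check, class by class, that every type other than the antichains contains an infinite chain; but the direct argument above is shorter and sidesteps that machinery.
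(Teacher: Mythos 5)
Your proof is correct, and it takes a genuinely different route from the paper. The paper deduces the lemma from the Cameron--Lockett classification (Theorem~\ref{strictHHposets}): it runs through classes (2)--(6) and exhibits in each one a copy of $\mathbb{Q}$, an infinite descending chain, or an infinite ascending chain (using strict local boundedness and the absence of maximal elements in $\bA_{<X}$ for the last two classes). You instead use homomorphism-homogeneity directly: since the induced substructure on a singleton carries the empty relation (asymmetry of $<$ gives irreflexivity), the one-point map $a_0\mapsto a_1$ is a local homomorphism, hence extends to an endomorphism $g$, and iterating $g$ produces $a_0<a_1<a_2<\cdots$, with transitivity and asymmetry guaranteeing comparability and pairwise distinctness. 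Your argument is shorter, avoids the classification machinery entirely, and — as you note — does not use countability, so it actually proves the statement for strict posets of arbitrary cardinality (the countability hypothesis in the lemma is inherited only from the classification theorem the paper chooses to cite, which is in any case all that is needed later, since the lemma feeds into Lemma~\ref{partchar} via Lemma~\ref{chainstrict}, where only chains of arbitrary finite length are required and your ascending chain supplies them). What the paper's route buys in exchange is essentially nothing beyond reuse of a theorem it already quotes for other purposes; your direct argument is the more economical and more general one.
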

\begin{proof}
	Let $\bA=(A,<)$ be a homomorphism homogeneous strict poset that is not an antichain. We will go systematically through the classes of homomorphism homogeneous strict posets given in Theorem~\ref{strictHHposets}.
	
	If $\bA$ belongs to class~\eqref{class2}, then any copy of $\mathbb{Q}$ is an infinite chain in $\bA$. 
	
	Suppose, $\bA$ belongs to class~\eqref{class3}. Since $\bA$ has no minimal elements, it contains an infinite descending chain. Analogously, if $\bA$ belongs to class~\eqref{class4}, then it contains an infinite ascending chain.
	
	Suppose finally, that $\bA$ belongs to class~\eqref{U} or class~\eqref{class6}. Let $X\subseteq A$ be finite. Then $\bA_{<X}\neq\emptyset$, since $\bA$ is strictly locally bounded. Moreover, $\bA_{<X}$ has no maximal elements. Hence, $\bA_{<X}$ contains an infinite ascending chain. 
\end{proof}

\begin{lemma}\label{x5dense2}
	Let $\bA=(A,<)$ be a strict poset and let $k\in\bN\setminus\{0\}$. Then $\bA$ is $X_5$-dense if and only if $\bA^k$ is $X_5$-dense.
\end{lemma}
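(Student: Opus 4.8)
The plan is to run the proof exactly parallel to that of Lemma~\ref{X5dense} (and, structurally, Lemma~\ref{locbound}), working coordinatewise in both directions. The only preliminary point worth spelling out is that the product relation of the excerpt, specialized to the single binary symbol interpreted by $<$, is precisely the coordinatewise strict order $\ta<\tb\iff a_i<b_i$ for all $i$, and that this relation is again asymmetric and transitive, so that $\bA^k$ is a bona fide strict poset and $X_5$-density is meaningful for it. I would also record once that the diagonal map $a\mapsto(a,\dots,a)$ embeds $\bA$ into $\bA^k$ as an induced substructure, so that comparabilities between such constant tuples agree with comparabilities in $\bA$; this is what makes the ``$\Leftarrow$'' direction work.

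For ``$\Rightarrow$'': assume $\bA$ is $X_5$-dense and take $\ta_1,\ta_2,\ta_3,\ta_4\in A^k$ with $\{\ta_1,\ta_2\}<\{\ta_3,\ta_4\}$ in $\bA^k$, writing $\ta_i=(a_{i,1},\dots,a_{i,k})$. Fixing a coordinate $j$, the hypothesis unpacks to $\{a_{1,j},a_{2,j}\}<\{a_{3,j},a_{4,j}\}$ in $\bA$, so $X_5$-density of $\bA$ produces $c_j\in A$ with $\{a_{1,j},a_{2,j}\}<c_j<\{a_{3,j},a_{4,j}\}$. Then $\tc:=(c_1,\dots,c_k)$ satisfies $\{\ta_1,\ta_2\}<\tc<\{\ta_3,\ta_4\}$ in $\bA^k$, as required.

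For ``$\Leftarrow$'': assume $\bA^k$ is $X_5$-dense and take $a_1,a_2,a_3,a_4\in A$ with $\{a_1,a_2\}<\{a_3,a_4\}$ in $\bA$. Pass to the constant tuples $\ta_i:=(a_i,\dots,a_i)$, which satisfy $\{\ta_1,\ta_2\}<\{\ta_3,\ta_4\}$ coordinatewise in $\bA^k$; by hypothesis there is $\tc=(c_1,\dots,c_k)\in A^k$ with $\{\ta_1,\ta_2\}<\tc<\{\ta_3,\ta_4\}$. Reading off the first coordinate gives $\{a_1,a_2\}<c_1<\{a_3,a_4\}$ in $\bA$, so $\bA$ is $X_5$-dense.

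I do not expect a genuine obstacle here; the argument is elementary and entirely analogous to the non-strict case. The two things I would be careful not to skip are (i) checking that the definition of $X_5$-density imposes no distinctness or incomparability hypothesis on $a_1,\dots,a_4$, so that degenerate configurations cause no problem, and (ii) being explicit that the coordinatewise order is the one induced by the relational-product construction, so that the passage to constant tuples is legitimate in both directions.
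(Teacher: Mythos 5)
Your proof is correct and is exactly the argument the paper intends: the paper's proof of this lemma simply says ``Cf.\ the proof of Lemma~\ref{X5dense}'', i.e.\ the same coordinatewise construction in the forward direction and the passage to constant (diagonal) tuples in the backward direction that you carry out. Your extra remarks about the product order and the diagonal embedding are harmless elaborations of the same approach.
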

\begin{proof}
	Cf. the proof of Lemma~\ref{X5dense}.
\end{proof}

\begin{lemma}\label{locbound2}
	Let $\bA=(A,<)$ be a strict poset and let $k\in\bN\setminus\{0\}$. Then $\bA$ is strictly locally bounded if and only if $\bA^k$ is strictly locally bounded.
\end{lemma}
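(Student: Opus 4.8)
The plan is to follow the proof of Lemma~\ref{locbound} almost verbatim, replacing the reflexive order $\le$ by the strict order $<$ and the boundedness condition ``$c\le B\le d$'' by ``$\bA_{<X}$ and $\bA_{>X}$ are both nonempty''. The one preliminary observation I need is that, by the definition of the product of relational structures, the interpretation of $<$ in $\bA^k$ is the coordinatewise strict order: for $\ta=(a_1,\dots,a_k)$ and $\tb=(b_1,\dots,b_k)$ we have $\ta<\tb$ in $\bA^k$ if and only if $a_j<b_j$ in $\bA$ for every $j\in\{1,\dots,k\}$.

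For the direction ``$\Rightarrow$'' I would take a finite set $\{\ta_1,\dots,\ta_m\}\subseteq A^k$ with $\ta_i=(a_{i,1},\dots,a_{i,k})$ and apply strict local boundedness of $\bA$ to each of the finite sets $\{a_{1,j},\dots,a_{m,j}\}$, for $j\in\{1,\dots,k\}$. This produces $c_j\in\bA_{<\{a_{1,j},\dots,a_{m,j}\}}$ and $d_j\in\bA_{>\{a_{1,j},\dots,a_{m,j}\}}$. Setting $\tc:=(c_1,\dots,c_k)$ and $\td:=(d_1,\dots,d_k)$, the coordinatewise description of $<$ on $\bA^k$ yields $\tc<\ta_i<\td$ for every $i$, so both $(\bA^k)_{<\{\ta_1,\dots,\ta_m\}}$ and $(\bA^k)_{>\{\ta_1,\dots,\ta_m\}}$ are nonempty; hence $\bA^k$ is strictly locally bounded.

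For the converse, given a finite set $\{a_1,\dots,a_m\}\subseteq A$, I would pass to the diagonal tuples $\ta_i:=(a_i,\dots,a_i)\in A^k$ and use strict local boundedness of $\bA^k$ to obtain $\tc=(c_1,\dots,c_k)$ and $\td=(d_1,\dots,d_k)$ with $\tc<\ta_i<\td$ in $\bA^k$ for all $i$. Reading off the first coordinate gives $c_1<a_i<d_1$ in $\bA$ for all $i$, so $\bA_{<\{a_1,\dots,a_m\}}$ and $\bA_{>\{a_1,\dots,a_m\}}$ are nonempty, and $\bA$ is strictly locally bounded.

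I do not expect a genuine obstacle: the argument is routine bookkeeping, just as for the non-strict case. The only point worth a moment's care is reading the (somewhat awkwardly phrased) definition correctly as ``$\bA_{<X}\ne\emptyset$ and $\bA_{>X}\ne\emptyset$ for every finite $X\subseteq A$''; taking $X=\emptyset$ this already forces $A\ne\emptyset$, but since $A\ne\emptyset$ if and only if $A^k\ne\emptyset$, that degenerate situation is handled by the same two inclusions.
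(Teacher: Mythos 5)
Your proof is correct and is exactly what the paper intends: its own proof of this lemma just says ``Cf.\ the proof of Lemma~\ref{locbound}'', and your coordinatewise construction of lower and upper bounds for ``$\Rightarrow$'' together with the diagonal-tuple argument for ``$\Leftarrow$'' is precisely that adaptation to the strict order.
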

\begin{proof}
	Cf. the proof of Lemma~\ref{locbound}.
\end{proof}

\begin{lemma}\label{chainstrict}
	Let $\bA=(A,<)$ be a strict poset that contains a chain of length $l$. Then for every $l$-element poset $\bB$ there exists some $k\in\bN\setminus\{0\}$ such that $\bB\in\Age(\bA^k)$.
\end{lemma}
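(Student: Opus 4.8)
This is the strict-poset counterpart of Lemma~\ref{chain}, so the plan is to run the same argument: realize $\bB$ as a substructure of a power of a chain via the order dimension, and then transplant that chain into $\bA$. First I would invoke the Dushnik--Miller theorem \cite{DusMil41}: the strict order of the $l$-element poset $\bB$ is the intersection of finitely many linear orders $L_1,\dots,L_k$ on $B$, where $k$ is the order dimension of $\bB$. Viewing each $L_j$ as a bijection $h_j\colon B\to\{1,\dots,l\}$ (so that $a<_{L_j}b$ iff $h_j(a)<h_j(b)$), the realizer property says exactly that, for all $a,b\in B$, we have $a<b$ in $\bB$ if and only if $h_j(a)<h_j(b)$ for every $j\in\{1,\dots,k\}$; this is the same $l\times k$ matrix $\big(h_j^{-1}(i)\big)_{i,j}$ that appears in the proof of Lemma~\ref{chain}.

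Next, I would fix a chain $c_1<c_2<\dots<c_l$ of length $l$ in $\bA$ (which exists by hypothesis) and define
\[
  f\colon B\to A^k,\qquad a\mapsto(c_{h_1(a)},\dots,c_{h_k(a)}).
\]
Then I would check that $f$ is an embedding of $\bB$ into $\bA^k$. Injectivity is clear, since the $c_i$ are pairwise distinct and $h_1$ is a bijection, so $f(a)=f(b)$ forces $h_1(a)=h_1(b)$ and hence $a=b$. For the order: by the definition of the product of relational structures, the relation $<$ on $\bA^k$ is coordinatewise, i.e.\ $\ta<\tb$ in $\bA^k$ iff $a_j<b_j$ in $\bA$ for all $j$; combining this with the fact that $c_i<c_{i'}$ iff $i<i'$ (as $c_1<\dots<c_l$ is a chain), we get that $f(a)<f(b)$ in $\bA^k$ iff $h_j(a)<h_j(b)$ for all $j$, iff $a<b$ in $\bB$ by the realizer property. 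Hence $f$ is an embedding and $\bB\in\Age(\bA^k)$, as desired.

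I do not expect any real obstacle here beyond transcribing the proof of Lemma~\ref{chain} to the strict setting. The one step that deserves attention is the implication $f(a)<f(b)\Rightarrow a<b$, which genuinely uses the full realizer property: every comparability of $\bB$ is witnessed in each $L_j$, and incomparable pairs are reversed by some $L_j$, so that $\bigcap_j L_j$ is precisely the strict order of $\bB$. Given this, I would keep the written proof short and simply refer to the argument of Lemma~\ref{chain}, exactly as was done for Lemmas~\ref{x5dense2} and~\ref{locbound2}.
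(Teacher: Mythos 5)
Your proposal is correct and is precisely the argument the paper intends: its proof of this lemma simply says ``Cf.\ the proof of Lemma~\ref{chain}'', i.e.\ the same Dushnik--Miller realizer construction $a\mapsto(c_{h_1(a)},\dots,c_{h_k(a)})$ transcribed to the strict setting, which is what you carry out. Your explicit check that the realizer property gives both directions of $a<b \iff f(a)<f(b)$ (and injectivity) is exactly the verification implicit in the paper.
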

\begin{proof}
	Cf. the proof of Lemma~\ref{chain}.
\end{proof}

\begin{lemma}\label{partchar}
	Let $\bA$ be a  po\-ly\-mor\-phism-ho\-mo\-ge\-ne\-ous strict poset that is not an antichain. Then $\bA$ is strictly locally bounded and $X_5$-dense.
\end{lemma}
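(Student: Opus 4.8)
The plan is to reproduce, for strict posets, the pattern of the proofs of Lemmas~\ref{locboundPH} and~\ref{X5densePH}. The engine of both of those arguments is a \emph{large age property}: for every finite poset $\bB$ there is a $k\in\bN\setminus\{0\}$ with $\bB\in\Age(\bA^k)$. Granting this, one negates the desired conclusion, locates inside a suitable power $\bA^k$ a configuration witnessing the obstruction, transports the failing map back to $\bA^k$ along the diagonal, and concludes that $\bA^k$ is not homomorphism-homogeneous; by Proposition~\ref{PHHH} this contradicts the polymorphism-homogeneity of $\bA$. (Recall that a polymorphism-homogeneous structure is in particular $1$-polymorphism-homogeneous, i.e.\ homomorphism-homogeneous.)

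First I would establish the large age property, and this is where I expect the only real work, since here the strict case genuinely differs from the non-strict one. In Lemma~\ref{largeage} one builds an arbitrarily long chain inside a power of $\bA$ simply by flipping the coordinates of a comparable pair $a<b$; this is hopeless for strict posets, because the product order on $\bA^k$ is coordinate-wise \emph{strict}, hence irreflexive, so every chain of $\bA^k$ projects coordinate-wise to a chain of $\bA$ of the same length --- the powers of $\bA$ contain no longer chains than $\bA$ itself. One is therefore forced to find an infinite chain already inside $\bA$. But $\bA$ is homomorphism-homogeneous and not an antichain, so Lemma~\ref{infChain} (which is where countability of $\bA$ enters) supplies an infinite chain, hence a chain of length $l$ for every $l$. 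Feeding a chain of length $l=|B|$ into Lemma~\ref{chainstrict} then yields the large age property.

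For $X_5$-density, I would assume $\bA$ is not $X_5$-dense and fix $a_1,a_2,a_3,a_4\in A$ with $\{a_1,a_2\}<\{a_3,a_4\}$ but with no $c\in A$ satisfying $\{a_1,a_2\}<c<\{a_3,a_4\}$. Then $\bA$ is not an antichain, so by the large age property there are a $k$ and elements $\tb_1,\tb_2,\tb_3,\tb_4,\td\in A^k$ inducing a copy of $X_5$ with $\{\tb_1,\tb_2\}<\td<\{\tb_3,\tb_4\}$; the comparabilities among the four outer points are then exactly $\tb_1<\tb_3$, $\tb_1<\tb_4$, $\tb_2<\tb_3$, $\tb_2<\tb_4$. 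Put $\ta_i:=(a_i,\dots,a_i)\in A^k$. The assignment $\tb_i\mapsto\ta_i$ is a local homomorphism of $\bA^k$, because those comparabilities are respected precisely since $\{a_1,a_2\}<\{a_3,a_4\}$ holds in $\bA$. It cannot be extended to an endomorphism $\hat f$ of $\bA^k$: the first coordinate of $\hat f(\td)$ would be an element $c$ of $A$ with $\{a_1,a_2\}<c<\{a_3,a_4\}$, which does not exist. Hence $\bA^k$ is not homomorphism-homogeneous, contradicting Proposition~\ref{PHHH}.

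For strict local boundedness, I would assume $\bA$ is not strictly locally bounded; dualising the order if necessary, fix a finite $X=\{x_1,\dots,x_m\}\subseteq A$ with $\bA_{>X}=\emptyset$. Again $\bA$ is not an antichain, and by the large age property there are a $k$ and elements $\tc_1,\dots,\tc_m,\tu\in A^k$ with $\tc_1,\dots,\tc_m$ pairwise incomparable and $\tc_i<\tu$ for all $i$ (realising an $m$-element antichain together with a common strict upper bound). Set $\tx_i:=(x_i,\dots,x_i)\in A^k$. Since its domain is an antichain, the assignment $\tc_i\mapsto\tx_i$ is trivially a local homomorphism of $\bA^k$; an extension of it to an endomorphism $\hat f$ of $\bA^k$ would make $\hat f(\tu)$ a common strict upper bound of $\tx_1,\dots,\tx_m$, so its first coordinate would lie in $\bA_{>X}=\emptyset$ --- impossible. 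Thus $\bA^k$ is not homomorphism-homogeneous, and Proposition~\ref{PHHH} again yields a contradiction. This finishes the plan; apart from the large age property, everything is a routine transcription of Lemmas~\ref{locboundPH} and~\ref{X5densePH}.
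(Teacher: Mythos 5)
Your proposal is correct and follows exactly the paper's route: the paper's proof of Lemma~\ref{partchar} is precisely the remark that the arguments of Lemmas~\ref{locboundPH} and~\ref{X5densePH} carry over once Lemma~\ref{largeage} is replaced by Lemma~\ref{infChain} in conjunction with Lemma~\ref{chainstrict}, which is the substitution you make, and for the right reason (powers of a strict poset contain no longer chains than the base, so the reflexivity trick of Lemma~\ref{largeage} is unavailable). Your side remark that countability enters through Lemma~\ref{infChain} also matches how the lemma is actually used in the paper.
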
 
\begin{proof}
	Essentially the same proofs as in Lemma~\ref{locboundPH} and Lemma~\ref{X5densePH} work. Instead of using Lemma~\ref{largeage}, we use Lemma~\ref{infChain}, in conjunction with Lemma~\ref{chainstrict}. 
\end{proof}

Now we are ready to give a complete characterization of the countable po\-ly\-mor\-phism-ho\-mo\-ge\-ne\-ous strict posets:
\begin{theorem}
	A countable strict partially ordered set $\bA=(A,<)$ is po\-ly\-mor\-phism-ho\-mo\-ge\-ne\-ous if and only if either it  is an antichain, or it is an extension of the countable universal homogeneous poset $\bU=(U,<)$.
\end{theorem}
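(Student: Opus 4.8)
The plan is to deduce the theorem from the classification of countable homomorphism-homogeneous strict posets (Theorem~\ref{strictHHposets}) and its reformulation in Proposition~\ref{genposetchar}, by showing that the relevant class is precisely the one which is stable under passing to finite direct powers, and then invoking the power criterion of Proposition~\ref{PHHH}.

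\textbf{Direction ``$\Leftarrow$''.} If $\bA$ is an antichain, every map $A^k\to A$ is a polymorphism, so $\bA$ is trivially polymorphism-homogeneous. Suppose instead that $\bA$ is an extension of $\bU$. By Proposition~\ref{genposetchar}, $\bA$ then lies in class~(\ref{U}) of Theorem~\ref{strictHHposets}; that is, $\bA$ is strictly locally bounded, $X_5$-dense, and for every finite $X\subseteq A$ the set $\bA_{<X}$ has no maximal element and $\bA_{>X}$ has no minimal element. I would then check that every finite power $\bA^k$ again enjoys all four of these properties: strict local boundedness transfers by Lemma~\ref{locbound2}, $X_5$-density by Lemma~\ref{x5dense2}, and the two remaining conditions follow by a coordinate-wise argument in the style of the proof of Lemma~\ref{locbound2}. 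For instance, if $\overline{m}=(m_1,\dots,m_k)$ were maximal in $\bA^k_{<X}$, then, writing $X_j:=e_j(X)$, one has $m_j\in\bA_{<X_j}$ for each $j$, so there is some $m_j'\in\bA_{<X_j}$ with $m_j<m_j'$, whence $(m_1',\dots,m_k')\in\bA^k_{<X}$ lies strictly above $\overline{m}$, a contradiction; the argument for $\bA^k_{>X}$ is dual. Hence $\bA^k$ lies in class~(\ref{U}) of Theorem~\ref{strictHHposets} and is therefore homomorphism-homogeneous; as this holds for every $k$, Proposition~\ref{PHHH} yields that $\bA$ is polymorphism-homogeneous.

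\textbf{Direction ``$\Rightarrow$''.} Assume $\bA$ is polymorphism-homogeneous and not an antichain. Then $\bA$ is homomorphism-homogeneous, so by Theorem~\ref{strictHHposets} it lies in one of the six classes, and by Lemma~\ref{partchar} it is in addition strictly locally bounded and $X_5$-dense. The first class is excluded. If $\bA$ lies in class~(\ref{class2}),~(\ref{class3}) or~(\ref{class4}), strict local boundedness forces $\bA$ to have no two incomparable elements: in class~(\ref{class2}), elements from two different copies of $(\mathbb{Q},<)$ have empty $\bA_{<X}$; in a tree (dually, a dual tree) two incomparable elements have no common upper (resp.\ lower) bound. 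Hence $\bA$ is a countable chain; being $X_5$-dense it is densely ordered, and being strictly locally bounded it has no endpoints, so $\bA\cong(\mathbb{Q},<)$, which one checks directly to satisfy the four defining properties of class~(\ref{U}). If $\bA$ lies in class~(\ref{class6}), then $\bA$ is strictly locally bounded with $\bA_{<X}$ having no maximal and $\bA_{>X}$ no minimal element for all finite $X$; together with the $X_5$-density supplied by Lemma~\ref{partchar}, these are exactly the defining properties of class~(\ref{U}). In all remaining cases $\bA$ lies in class~(\ref{U}), so $\bA$ is an extension of $\bU$ by Proposition~\ref{genposetchar}.

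\textbf{Main obstacle.} The step I expect to require the most care is the coordinate-wise transfer of \emph{all four} properties of class~(\ref{U}) to the powers $\bA^k$ in the ``$\Leftarrow$'' direction: the point is not merely that $\bA^k$ is again strictly locally bounded and $X_5$-dense (which is already recorded in Lemmas~\ref{locbound2} and~\ref{x5dense2}), but that the absence of maximal elements in $\bA^k_{<X}$ and of minimal elements in $\bA^k_{>X}$ persists, since only then can Theorem~\ref{strictHHposets} be applied to $\bA^k$.
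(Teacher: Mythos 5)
Your proposal is correct and follows essentially the same route as the paper: the backward direction shows coordinatewise that every finite power $\bA^k$ again satisfies the four properties of class~(\ref{U}) (using Lemmas~\ref{locbound2} and~\ref{x5dense2} plus the no-maximal/no-minimal transfer) and then applies Theorem~\ref{strictHHposets}, Proposition~\ref{genposetchar} and Proposition~\ref{PHHH}, while the forward direction combines Lemma~\ref{partchar} with Theorem~\ref{strictHHposets} and Proposition~\ref{genposetchar}. The only difference is that you spell out the case analysis through classes~(\ref{class2})--(\ref{class6}) which the paper compresses into one sentence; this added detail is sound.
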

\begin{proof}
	``$\Rightarrow$'' Suppose that $\bA$ is polymorphism-homogeneous. Then, by Lemma~\ref{partchar}, we have that $\bA$ is either an antichain or it is strictly locally bounded and $X_5$-dense. Suppose that $\bA$ is not an antichain. Since $\bA$ is polymorphism-homogeneous, it is in particular homomorphism homogeneous. Thus, by Theorem~\ref{strictHHposets} and by Proposition~\ref{genposetchar}, $\bA$ is an extension of the countable universal homogeneous poset $\bU$.
	
	``$\Leftarrow$'' Clearly, if $\bA$ is an antichain, then it is polymorphism-homogeneous. So suppose that $\bA$ is an extension of $\bU$. We will show that for every $k\in\bN\setminus\{0\}$, $\bA^k$ is an extension of $\bU$, too.  
	
	Since $\bA$ is strictly locally bounded and $X_5$-dense, it follows from Lemma~\ref{x5dense2} and from Lemma~\ref{locbound2}, that $\bA^k$ is strictly locally bounded and $X_5$-dense, for all $k\in\bN\setminus\{0\}$. It remains to show, that for every $k\in\bN\setminus\{0\}$, and for all finite $X\subseteq A^k$ we have that $\bA^k_{<X}$ has no maximal elements and $\bA^k_{>X}$ has no minimal elements. 
	
	So let $k\in\bN\setminus\{0\}$ and let $X\subseteq A^k$ be finite. Assume $X=(\tx_1,\dots,\tx_n)$ and for $1\le i\le n$ suppose $\tx=(x_{i,1},\dots,x_{i,k})$. Now for $j\in\{1,\dots,k\}$ define $X_j:=\{x_{i,j}\mid 1\le i \le n\}$. Let $(y_1,\dots, y_k)\in \bA^k_{<X}$ (this exists, since $\bA^k$ is strictly locally bounded). Then we have $y_j\in  \bA_{<X_j}$ for $j\in\{1,\dots,k\}$. Since, by assumption,  $\bA_{<X_j}$ has no maximal elements, there exists $y'_j\in \bA_{<X_j}$ such that $y_j< y'_j$, for all $j\in\{1,\dots,k\}$. Thus $(y_1,\dots,y_k)<(y'_1,\dots,y'_k)$ and $(y'_1,\dots,y'_k)\in\bA^k_{<X}$. Consequently, $\bA^k_{<X}$ has no maximal elements. Analogously it is shown that $\bA^k_{>X}$ has no minimal elements. Now, from Proposition~\ref{genposetchar}, it follows that $\bA^k$ is an extension of $\bU$. 

	By Theorem~\ref{strictHHposets}, together with Proposition~\ref{genposetchar}, it follows that $\bA^k$ is homomorphism homogeneous, for all $k\in\bN\setminus\{0\}$. Thus, from Proposition~\ref{PHHH}, it follows that $\bA$ is polymorphism-homogeneous.	
\end{proof}

\subsection*{Polymorphism-ho\-mo\-ge\-ne\-ous lattices of equivalence relations}
Let $A$ be a set and let $\cE(A)$ be the set of all equivalence relations on $A$. Then $\cE(A)$, ordered by inclusion forms a complete lattice, in which the infimum of a set of equivalence relations is their intersection, and the supremum is the transitive closure of their union.

A sublattice $\cL$ of $\cE(A)$ is called \emph{meet-complete} if it is closed with respect to arbitrary intersections. It is called \emph{arithmetical} if 
\begin{enumerate}
	\item $\forall \theta_1,\theta_2\in\cL\,:\, \theta_1\circ\theta_2=\theta_2\circ\theta_1$,
	\item $\forall \theta_1,\theta_2,\theta_2\in\cL\,:\, \theta_1\land(\theta_2\lor\theta_3)=(\theta_1\land\theta_2)\lor(\theta_1\land\theta_3)$
\end{enumerate}

We will call a sublattice $\cL$ of $\cE(A)$ po\-ly\-mor\-phism-ho\-mo\-ge\-ne\-ous if its canonical structure $\bC_{\cL}$ is po\-ly\-mor\-phism-ho\-mo\-ge\-ne\-ous. The following beautiful characterization of po\-ly\-mor\-phism-ho\-mo\-ge\-ne\-ous meet-complete sublattices of $\cE(A)$ is due to Kaarli \cite{Kaa83}:
\begin{theorem}[{Kaarli \cite[Thm.3]{Kaa83}}]
	Let $A$ be a countable set and let $\cL$ be a meet-complete sublattice of $\cE(A)$. Then $\cL$ is po\-ly\-mor\-phism-ho\-mo\-ge\-ne\-ous if and only if it is arithmetical.
\end{theorem}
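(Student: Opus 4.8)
The plan is to prove the two implications separately, in both cases reducing to statements about the members of $\cL$ as abstract equivalence relations. Throughout I will use the characterization of local polymorphisms from the preliminaries: an $n$-ary local polymorphism of $\bC_\cL$ is exactly a partial function $f$ with finite domain $D\subseteq A^n$ that is \emph{$\cL$-compatible}, meaning that for every $\theta\in\cL$ and all $\ta=(a_1,\dots,a_n),\tb=(b_1,\dots,b_n)\in D$, if $(a_i,b_i)\in\theta$ for all $i$ then $(f(\ta),f(\tb))\in\theta$.

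\textbf{Arithmetical $\Rightarrow$ polymorphism-homogeneous.} Here I would reduce, using countability, to a one-point extension problem and solve it with a Chinese-Remainder argument. Fix an $n$-ary local polymorphism $f$ of $\bC_\cL$ with domain $D$. Since $A^n$ is countable, it is enough to be able to adjoin one prescribed tuple $\overline e=(e_1,\dots,e_n)\in A^n$ to the domain of any $\cL$-compatible map while keeping $\cL$-compatibility: iterating this along an enumeration of $A^n$ and taking the union yields an $n$-ary polymorphism extending $f$ (that the union is $\cL$-compatible uses only that each relation of $\bC_\cL$ is binary, so that its preservation is tested on pairs of tuples, each such pair lying in some finite stage -- this is precisely the argument proving that polymorphism-homogeneity and weak polymorphism-homogeneity agree for countable structures). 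Unravelling $\cL$-compatibility, a value $v:=f(\overline e)$ works iff $(v,f(\ta))\in\theta$ for every $\ta\in D$ and every $\theta\in\cL$ with $(e_i,a_i)\in\theta$ for all $i$. Since $\cL$ is meet-complete, for each such $\ta$ there is a least such $\theta$, namely $\theta_{\ta}:=\bigwedge\{\theta\in\cL:(e_i,a_i)\in\theta\ \text{for all}\ i\}\in\cL$, and the requirement on $v$ becomes $v\in\bigcap_{\ta}[f(\ta)]_{\theta_{\ta}}$, the intersection ranging over those $\ta\in D$. For two such tuples $\ta,\ta'$ the relation $\theta_{\ta}\vee\theta_{\ta'}\in\cL$ relates $\overline e$ to $\ta$ and to $\ta'$ in every coordinate, hence relates $\ta$ to $\ta'$ in every coordinate, so $\cL$-compatibility of $f$ gives $(f(\ta),f(\ta'))\in\theta_{\ta}\vee\theta_{\ta'}$; by permutability this says $[f(\ta)]_{\theta_{\ta}}\cap[f(\ta')]_{\theta_{\ta'}}\neq\emptyset$. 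It then remains to invoke the classical generalized Chinese Remainder Theorem for arithmetical lattices of equivalence relations: a finite pairwise-solvable system of congruences $x\equiv u_i\pmod{\theta_i}$ is solvable. (Its proof is by induction on the number of congruences; the two-congruence case is exactly permutability, and the inductive step uses the finite distributive law $\bigwedge_{i\le n}(\theta_i\vee\eta)=\big(\bigwedge_{i\le n}\theta_i\big)\vee\eta$ together with permutability of $\bigwedge_{i\le n}\theta_i$ with $\eta$.)

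\textbf{Polymorphism-homogeneous $\Rightarrow$ arithmetical.} Since $\bC_\cL$ is in particular $3$-polymorphism-homogeneous, every ternary local polymorphism extends to a ternary polymorphism, and I would use this to witness each instance of the two identities. For permutability, given $\theta_1,\theta_2\in\cL$ and $(a,b)\in\theta_1$, $(b,c)\in\theta_2$ (with $a,b,c$ distinct, the degenerate cases being immediate), I would define $d$ on $\{(b,b,c),(a,c,c)\}\subseteq A^3$ by $d(b,b,c):=c$, $d(a,c,c):=a$; a routine check using only that the members of $\cL$ are reflexive, symmetric and transitive shows $d$ is $\cL$-compatible, so it extends to a ternary polymorphism $m$. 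As $m$ preserves $\theta_1$ and $\theta_2$, the element $m(a,b,c)$ satisfies $(m(a,b,c),c)=(m(a,b,c),m(b,b,c))\in\theta_1$ and $(a,m(a,b,c))=(m(a,c,c),m(a,b,c))\in\theta_2$, so $(a,c)\in\theta_2\circ\theta_1$; hence $\theta_1\circ\theta_2=\theta_2\circ\theta_1$. For distributivity it remains to show $\theta_1\wedge(\theta_2\vee\theta_3)\subseteq(\theta_1\wedge\theta_2)\vee(\theta_1\wedge\theta_3)$; given $(a,c)\in\theta_1$ together with $(a,b)\in\theta_2$, $(b,c)\in\theta_3$ (the latter two obtained from $(a,c)\in\theta_2\vee\theta_3$ via permutability), I would define $d$ on $\{(a,a,c),(a,c,c),(a,b,a)\}$ by $d(a,a,c):=a$, $d(a,c,c):=c$, $d(a,b,a):=a$; this is again $\cL$-compatible, extends to a ternary polymorphism $m$, and $m(a,b,c)$ then satisfies $(m(a,b,c),a)\in\theta_1$ (compare with $m(a,b,a)=a$, using $(a,c)\in\theta_1$), $(m(a,b,c),a)\in\theta_2$ (compare with $m(a,a,c)=a$, using $(a,b)\in\theta_2$), and $(m(a,b,c),c)\in\theta_3$ (compare with $m(a,c,c)=c$, using $(b,c)\in\theta_3$). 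Thus $(a,m(a,b,c))\in\theta_1\wedge\theta_2$ and $(m(a,b,c),c)\in\theta_1\wedge\theta_3$ (the latter since $(m(a,b,c),c)\in\theta_1$ by transitivity), so $(a,c)$ lies in the desired join, and $\cL$ is arithmetical.

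\textbf{Anticipated main obstacle.} The heart of the first implication is the generalized Chinese Remainder Theorem; it is the single place where permutability and distributivity are used jointly, and although classical I expect stating and proving it cleanly at the level of generality of a meet-complete arithmetical sublattice of $\cE(A)$ (rather than of the congruence lattice of an algebra) to be the point requiring the most care. The second implication is routine once the right finite partial polymorphisms are in hand; the only non-mechanical step there is guessing the two- and three-element domains above so that a single evaluation $m(a,b,c)$ delivers the element realizing the required congruence relation.
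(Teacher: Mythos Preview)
The paper does not give its own proof of this theorem; it merely quotes Kaarli's result as the closing item of Section~6. So there is nothing in the paper to compare your argument against line by line.

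That said, your proposal is sound and follows the classical route. For the forward direction you correctly reduce (via countability and the one-point-extension trick from Section~2) to solving a finite system of congruences $v\equiv f(\ta)\pmod{\theta_{\ta}}$, observe that meet-completeness makes each $\theta_{\ta}$ a genuine member of $\cL$, verify pairwise solvability from $\cL$-compatibility of $f$, and then invoke the Chinese Remainder Theorem for arithmetical lattices of equivalences. For the converse you build, for each instance of permutability and distributivity, a small partial $\cL$-compatible ternary function and read off the required witness from a global extension; your compatibility checks go through as written. One minor omission: in the distributivity argument you should note, as you did for permutability, that the degenerate cases $a=c$, $a=b$, or $b=c$ are immediate, so that your three domain tuples are genuinely distinct when you define $d$. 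Otherwise the argument is complete.
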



\end{document}